\newtheorem{theorem}{Theorem}[section]
\newtheorem{corollary}[theorem]{Corollary}
\newtheorem{definition}[theorem]{Definition}
\newtheorem{lemma}[theorem]{Lemma}
\newtheorem{proposition}[theorem]{Proposition}
\theoremstyle{remark}
\newtheorem{remark}[theorem]{Remark}
\numberwithin{equation}{section}
\newcommand{\ep}{\varepsilon}
\newcommand{\vp}{\varphi}
\begin{document}

\title[Viscous Fractional Cahn-Hilliard Equations with Memory]{Well-posedness and Global Attractors for Viscous Fractional Cahn-Hilliard Equations with Memory}

\author[E. \"{O}zt\"{u}rk and J. L. Shomberg]{Eylem \"{O}zt\"{u}rk$^1$ and Joseph L. Shomberg$^2$}

\subjclass[2010]{45K05, 35B40, 37L30, 35B36.}

\keywords{Cahn-Hilliard equation; fractional Laplacian; memory.}

\address{$^1$Department of Mathematics, Hacettepe University, 06800 Beytepe, Ankara, Turkey, \\ {\tt{eozturk@hacettepe.edu.tr}}}

\address{$^2$Department of Mathematics and Computer Science, Providence College, Providence, Rhode Island 02918, USA, \\ {\tt{jshomber@providence.edu}
}}

\date{\today }

\begin{abstract}
We examine a viscous Cahn-Hilliard phase-separation model with memory and where the chemical potential possesses a nonlocal fractional Laplacian operator.
The existence of global weak solutions is proven using a Galerkin approximation scheme. 
A continuous dependence estimate provides uniqueness of the weak solutions and also serves to define a precompact pseudometric.
This, in addition to the existence of a bounded absorbing set, shows that the associated semigroup of solution operators admits a compact connected global attractor in the weak energy phase space.
The minimal assumptions on the nonlinear potential allow for arbitrary polynomial growth.
\end{abstract}

\maketitle
\tableofcontents

\section{Introduction}

Let $\Omega$ be a smooth (at least Lipschitz) bounded domain in $\mathbb{R}^{N}$, $N=3,2,1$, with boundary $\partial\Omega$ and let $T>0$.
We consider the following viscous fractional Cahn-Hilliard equation in the unknown (order parameter) $u$ satisfying
\begin{align}
\partial_tu(t,x)=\int_0^\infty k(s)\Delta\mu(t-s,x) ds \quad \text{in} \quad \Omega\times(0,T),  \label{pde1}
\end{align}
$k$ is a so-called relaxation kernel, with a chemical potential $\mu$ given by
\begin{align}
\mu(t,x)=\alpha \partial_tu(t,x)+(-\Delta)^\beta u(t,x) + F'(u(t,x)) \quad \text{in} \quad \Omega\times\mathbb{R},  \label{pde2}
\end{align}
$\alpha>0,$ $\beta\in(0,1)$, and typically $F$ is a double-well potential (the precise assumptions on $F$ are stated in (N1)-(N3) below), subject to the boundary conditions
\begin{align}
u=0 \quad \text{on} \quad \mathbb{R}^{N}\backslash\Omega\times(0,T) \quad \text{and} \quad \partial_{\bf{n}}\mu=0 \quad \text{on} \quad \partial\Omega\times(0,T),  \label{bc}
\end{align}
with the given initial and past conditions
\begin{align}
u(0)=u_0(0) \quad \text{in} \quad \Omega \quad \text{and} \quad u(-t)=u_{0}(-t) \quad \text{in} \quad \Omega\times[0,T),  \label{ic}
\end{align}
for 
\[
u_0:\Omega\times(-\infty,0)\rightarrow\mathbb{R}.
\]

Here we define $(-\Delta)^\beta$ with $0<\beta<1$ as the (nonlocal) fractional Laplace operator.
In other words, let $\Omega \subset {\mathbb{R}}^{N}$ be an arbitrary open set and fix
\begin{equation*}
\mathcal{L}^{1}(\Omega) := \left\{ u:\ \Omega \rightarrow {\mathbb{R}}\
\mbox{measurable,}\ \int_{\Omega} \frac{|u(x)|}{(1+|x|)^{N+2\beta}} dx < \infty \right\}.
\end{equation*}
For $u\in \mathcal{L}^{1}({\mathbb{R}}^{N})$, $x\in {\mathbb{R}}^{N}$ and $\varepsilon >0$, we write
\begin{equation*}
(-\Delta)_{\varepsilon }^\beta u(x)=C_{N,\beta}\int_{\{y\in {\mathbb{R}}^N,|y-x|>\varepsilon \}}\frac{u(x)-u(y)}{|x-y|^{N+2\beta}} dy
\end{equation*}
with the normalized constant $C_{N,\beta}$ given by
\begin{align}
C_{N,\beta}=\frac{\beta2^{2\beta}\Gamma \left(\frac{N+2\beta}{2}\right)}{\pi^{\frac{N}{2}}\Gamma(1-\beta)},  \label{constant}
\end{align}
where $\Gamma$ denotes the usual Gamma function.
The (restricted) fractional Laplacian $(-\Delta)^\beta u$ of the function $u$ is defined by the formula
\begin{align}
(-\Delta)^\beta u(x) = C_{N,\beta}\mbox{P.V.}\int_{{\mathbb{R}}^N}\frac{u(x)-u(y)}{|x-y|^{N+2\beta}} dy = \lim_{\varepsilon \downarrow 0}(-\Delta)_\varepsilon^\beta u(x),\quad x\in{\mathbb{R}}^N,  \label{eq11}
\end{align}
provided that the limit exists.
We call $A^\beta$ the self-adjoint realization of the fractional Laplacian $(-\Delta)^\beta$ with Dirichlet boundary condition (\ref{bc})$_1$, see, e.g., \cite[Section 2.2]{Gal-Warma-15-1} (cf. also \cite{Warma15}).

Some remarks: First, observe the chemical potential \eqref{pde2} involves the Neumann (no-flux) condition described by \eqref{bc}$_2$.
Hence, when the memory function $k$ is {\em close} to the Dirac delta function, we recover the usual parabolic equation associated with the Cahn-Hilliard equation with the flux-free chemical potential.

Naturally, we are also interested in the closely related problem to \eqref{pde1}-\eqref{ic} whereby the fractional Laplace operator $(-\Delta)^\beta$ is replaced with the {\em{regional}} fractional Laplacian, $A^\beta_\Omega$, defined by first setting
\begin{equation*}
A^\beta_{\Omega,\varepsilon}u(x)=C_{N,\beta}\int_{\{y\in\Omega,|y-x|>\varepsilon \}}\frac{u(x)-u(y)}{|x-y|^{N+2\beta}} dy,
\end{equation*}
where $C_{N,\beta}$ is given by \eqref{constant}, then
\begin{align}
A^\beta_{\Omega}u(x) = C_{N,\beta}\mbox{P.V.}\int_{\Omega}\frac{u(x)-u(y)}{|x-y|^{N+2\beta}} dy = \lim_{\varepsilon \downarrow 0}A^\beta_{\Omega,\varepsilon}u(x),\quad x\in\Omega,  \label{eq12}
\end{align}
provided that the limit exists.
Assuming $u\in\mathcal{D}(\Omega)$ (cf. \cite[page 1280]{Gal-Warma-15-1}) then the two fractional Laplacian operators are related by 
\begin{align}
(-\Delta)^\beta u(x)=A^\beta_\Omega u(x)+V_\Omega(x)u(x), \quad \forall u \in \mathcal{D}(\Omega)  \label{Lap-id}
\end{align}
with the following potential
\begin{align}
V_\Omega(x) := C_{N,\beta}\int_{\mathbb{R}^N\setminus\Omega} \frac{dy}{|x-y|^{N+2\beta}}, \quad x\in\Omega.  \label{G-pot}
\end{align}
The comparable Cahn-Hilliard problem with the regional fractional Laplacian is then \eqref{pde1} with the chemical potential
\begin{align}
\mu=\alpha \partial_tu+A^\beta_\Omega u+F'(u) \quad \text{in} \quad \Omega\times(0,T),  \label{pde2-2}
\end{align}
now subject to the boundary conditions
\begin{align}
u=0 \quad \text{on} \quad \partial\Omega \times(0,T) \quad \text{and} \quad \partial_{\bf{n}}\mu=0 \quad \text{on} \quad \partial\Omega\times(0,T),  \label{bc-2}
\end{align}
with the above initial and past conditions in \eqref{ic}.
Our focus here is on obtaining results for the restricted fractional Laplacian, of which the regional counterpart can be view as a perturbation thanks to \eqref{Lap-id}.
The restricted fractional Laplacian appears in the context of nonlocal phase transitions with Dirichlet boundary conditions in \cite{Bucur-Valdinoci-16,CRS-10}.
On the other hand, the regional fractional Laplacian is generally better suited to treat problems with nonhomogeneous boundary data and even dynamic boundary conditions (see \cite{Gal-Warma-15-1,Gal-NCHEFDBC-16} and the references therein).

Inside a bounded container $\Omega\subset\mathbb{R}^3,$ the Cahn-Hilliard equation (cf. \cite{Cahn-Hilliard}) is a phase separation model for a binary solution (e.g. a cooling alloy, glass, or polymer),
\[
\partial_tu = \nabla\cdot[\kappa(u)\nabla\mu],
\]
where $u$ is the {\em{order-parameter}} (the relative difference of the two phases), $\kappa$ is the {\em{mobility function}} (which we set $\kappa\equiv1$ throughout this article), and $\mu$ is the {\em{chemical potential}} (the first variation of the free-energy $E$ with respect to $u$).
In the classical model,
\[
\mu = -\Delta u + F'(u) \quad \text{and} \quad E(u) = \int_\Omega \left( \frac{1}{2}|\nabla u|^2 + F(u) \right) dx,
\]
where $F$ describes the density of potential energy in $\Omega$ (e.g. the double-well potential $F(s)=(1-s^2)^2$).

Recently the nonlocal free-energy functional appears in the literature \cite{Giacomin-Lebowitz-97},
\[
E(\phi) = \int_\Omega\int_\Omega \frac{1}{4}J(x-y)(\phi(x)-\phi(y))^2 dxdy + \int_\Omega F(\phi) dx,
\]
hence, the {\em{chemical potential}} is, 
\begin{align}
\mu = a\phi - J*\phi + F'(\phi), \label{wk-non1}
\end{align}
where
\begin{align}
a(x) = \int_\Omega J(x-y) dy \quad \text{and} \quad (J*\phi)(x) = \int_\Omega J(x-y)\phi(y) dy. \label{wk-non2}
\end{align}
In view of \cite{Gal-16-doubly,Gal-17-str-str-CHE}, the nonlocality expressed in \eqref{wk-non1}-\eqref{wk-non2} (cf. also \cite{AVMRTM10,Bates&Han05,CFG12,Frigeri&Grasselli12,Frigeri-Grasselli-Rocca_2014,Gal&Grasselli14,Grasselli-Schimperna-2011,Porta-Grasselli-2014,Shomberg-n16,Shomberg18-1}) is termed {\em weak} while the type under consideration here in \eqref{pde2} and \eqref{eq11} is called {\em strong}.
Under certain conditions the strong type reduces to the weak (cf. \cite{Gal-16-doubly}, and also see \cite{Giacomin-Lebowitz-97}).
Recently there has been much interest in the nonlocal Cahn--Hilliard equation with strong interactions of the restricted fractional Laplacian type \eqref{eq11} and the regional fractional Laplacian type \eqref{eq12} (cf. \cite{GA-GS-AS_2015,Bucur-Valdinoci-16,Gal-16-doubly,Gal-NCHEFDBC-16,Gal-17-str-str-CHE}).
The results in these references concern global well-posedness, and when available, the existence of finite dimensional global attractors and regularity.

Additionally, there has been exceptional growth concerning dissipative infinite-dimensional systems with {\em memory} including models arising in the theory of heat conduction in special materials (cf. e.g. \cite{CPS06,Gal-Shomberg15-2,GPM98,GPM00,Shomberg-reacg16}) and the
theory of phase-transitions (cf. e.g. \cite{CGG11,Conti-Mola-08,Gal&Grasselli12,GGMP05-mem,GGPS05,GGP01,Grasselli08,Grasselli&Pata05,GPV03}). 
One feature of equations that undergo ``memory relaxation'' is admissibility of a so-called {\em inertia} term.
For example (cf. e.g. \cite{GMPZ10}) the first-order equation with memory
\[
u_t(t) + \int_0^\infty k_\ep(s) f(u(t-s)) ds = 0 
\]
for 
\[
k_\ep(s) = \frac{1}{\ep} e^{-s/\ep}
\]
leads us (formally) to the ``hyperbolic relaxation'' equation
\[
\ep u_{tt}(t) + u_t(t) + f(u(t)) = 0.
\]
In this way, our model also includes the Cahn--Hilliard equation with inertial term (cf. \cite{CGW-14,GPS07,GSSZ-09,SS-16}). 
Hence, the novelty in the present work is a relaxation of a phase-field model with a strongly interacting nonlocal diffusion mechanism.  

In this article, our aims are:
\begin{itemize}
\item To provide a framework to establish the global (in time) well-posedness of the model problems \eqref{pde1}-\eqref{ic} and \eqref{pde1}, \eqref{ic}, \eqref{pde2-2} and \eqref{bc-2}.

\item To prove the semigroup of solution operators admits a compact global attractor.
\end{itemize}

In order to reach these aims, we require sufficient growth conditions on $F$ (given below) in order to employ a Galerkin scheme with suitable {\em{a priori}} estimates.
With a finite energy phase space identified, a one-parameter family of solution operators is defined, hence generating a semi-dynamical system.
This semigroup is dissipative on the energy phase-space and also defines an $\alpha$-contraction on the phase-space.
The existence of a compact global attractor follows.

\section{Past history formulation and functional setup}  \label{s:functional}

We now introduce the well-established past history approach from \cite{Grasselli&Pata02-2} (cf. also \cite{Conti-Mola-08,GGMP05-mem}) by defining the past history variable, for all $s>0$ and $t>0,$
\begin{align}  \label{eta-1}
\eta^t(x,s)=\int_0^s -\Delta\mu(x,t-\sigma)d\sigma.
\end{align}
Observe, $\eta$ satisfies the boundary condition
\begin{align}  \label{eta-0}
\eta^t(x,0)=0 \quad \text{on} \quad \Omega\times(0,\infty).
\end{align}
When $k$ is sufficiently smooth and vanishes at $+\infty$ (these assumptions will be made more precise  below), then integration by parts yields
\[
\int_0^\infty k(s)\Delta\mu(x,t-s)ds = -\int_0^\infty \nu(s)\eta^t(x,s)ds
\]
where $\nu(s)=-k'(s)$.

We may now formulate the model problem \eqref{pde1}-\eqref{ic} as:

\noindent Problem {\textbf P}. 
Find $(u,\eta)=(u(x,t),\eta^t(x,s))$ on $(0,\infty)$ such that
\begin{eqnarray}
\partial_tu(x,t)+\int_0^\infty \nu(s)\eta^t(x,s)ds=0 & \text{in} & \Omega\times(0,\infty)  \label{p-1} \\
\mu(x,t) = \alpha\partial_t u(x,t) + (-\Delta)^\beta u(x,t) + F'(u(x,t)) & \text{in} & \Omega\times(0,\infty)  \label{p-2} \\
\partial_t\eta^t(x,s) + \partial_s\eta^t(x,s) = -\Delta\mu(x,t) & \text{in} & \Omega\times(0,\infty)\times(0,\infty)  \label{p-3}
\end{eqnarray}
hold subject to \eqref{bc} and \eqref{eta-0}, and satisfying the initial conditions \eqref{ic}$_1$ and
\begin{eqnarray}
\eta^0(x,s)=\eta_0(x,s) \quad \text{in} \quad \Omega\times(0,\infty),  \label{p-5}
\end{eqnarray}
whereby with \eqref{eta-1},
\begin{align}  \label{p-6}
\eta_0(x,s)=\int_0^s-\Delta \mu_0(x,-y)dy \quad \text{in} \quad \Omega\times(0,\infty),
\end{align}
where in light of \eqref{ic}$_2$,
\begin{align}  \label{mu-ic}
\mu_0(x,t)=\alpha\partial_tu_0(x,t)+(-\Delta)^\beta u_0(x,t) + F'(u_0(x,t)) \quad \text{for} \quad t\le0.
\end{align}
Additionally, we are also interested in treating the related problem where the above fractional Laplace operator $(-\Delta)^\beta$ is replaced with the regional counterpart $A^\beta_\Omega$.
Hence, the formulation of the related {\em regional} Problem {\textbf P} is based on \eqref{pde1}, \eqref{ic}, \eqref{pde2-2}, and \eqref{bc-2}.

Here we introduce some notation.
From now on, we denote by $\Vert \cdot \Vert _X$, the norm in the specified (real) Banach space $X$, and $(\cdot,\cdot)_{Y}$ denotes the product on the specified (real) Hilbert space $Y$.
The dual pairing between $Y$ and the dual $Y^*$ is denoted by $\langle u,v\rangle_{Y^*\times Y}$.
The set $\Omega$ is omitted from the space when we indicate the norm.
We denote the measure of the domain $\Omega $ by $|\Omega|$.
In many calculations, functional notation indicating dependence on the variable $t$ is dropped; for example, we will write $u$ in place of $u(t)$ or $\eta^t$ in place of $\eta^t(s)$.
Throughout the paper, $C$ will denote a \emph{generic} positive constant, while $Q:\mathbb{R}_{+}\rightarrow \mathbb{R}_{+}$ will denote a \emph{generic} increasing function.
Such generic terms may or may not indicate dependencies on the (physical) parameters of the model problem, and may even change from line to line.

Let us define the linear operator $A_N:=-\Delta$ on $D(A_N)=\{\psi\in H^2(\Omega):\partial_{\bf{n}}\psi=0\ \text{on}\ \partial\Omega\}$, as the realization in $L^2(\Omega)$ of the Laplace operator endowed with Neumann boundary conditions.
Here, $-\Delta$ denotes the usual (local) Laplace operator.
It is well-known that $A_N$ is the generator of a bounded analytic semigroup $e^{-A_Nt}$ on $L^2(\Omega)$.
Additionally, $A_N$ is nonnegative and self-adjoint on $L^2(\Omega).$
With $H^{-r}(\Omega):=(H^{r}(\Omega))^*$, $r\in\mathbb{N}_+$, denote by $\langle\cdot\rangle$ the spatial average over $\Omega;$ i.e.,
\[
\langle \psi\rangle:=\frac{1}{|\Omega|}\langle \psi,1\rangle_{H^{-r}\times H^r}.
\]
We set $H^r_{(0)}(\Omega)=\{\psi\in H^r(\Omega):\langle\psi\rangle=0\}$, $H^0(\Omega)=L^2(\Omega),$ and we know that $A^{-1}_N:H^0_{(0)}(\Omega)\rightarrow H^0_{(0)}(\Omega)$ is a well-defined mapping.
We will refer to the following norms in $H^{-r}(\Omega)$ (which are equivalent to the usual norms)
\begin{equation}
\|\psi\|^2_{H^{-r}}=\|A_N^{-r/2}(\psi-\langle\psi\rangle)\|^2+|\langle\psi \rangle|^2.  \label{H-neg}
\end{equation}
The Sobolev space $H^1(\Omega)$ is endowed with the norm,
\begin{equation}  \label{H1-norm}
\|\psi\|^2_{H^1}:= \|\nabla\psi\|^2 + \langle \psi \rangle^2.
\end{equation}
Denote by $\lambda_\Omega>0$ the constant in the Poincar\'{e}-Wirtinger inequality,
\begin{equation}  \label{Poincare}
\|\psi-\langle\psi\rangle\| \le \sqrt{\lambda_\Omega}\|\nabla\psi\|.
\end{equation}
Whence, for $\lambda^*_\Omega:=\max\{\lambda_\Omega,1\}$, there holds, for all $\psi\in H^1(\Omega),$
\begin{align}
\|\psi\|^2 & \le \lambda_\Omega\|\nabla\psi\|^2 + \langle\psi\rangle^2  \label{Poincare2} \\
& \le \lambda^*_\Omega\|\psi\|^2_{H^1}.  \notag
\end{align}

We now more rigorously describe the fractional Laplacian with Dirichlet boundary conditions.
For an arbitrary bounded domain $\Omega\subset\mathbb{R}^N$ and for $\beta\in(0,1)$, denote the fractional-order Sobolev space by,
\begin{equation*}
W^{\beta,2}(\Omega) := \left\{ u\in L^2(\Omega) : \int_\Omega \int_\Omega \frac{|u(x)-u(y)|^2}{|x-y|^{N+2\beta}} dxdy < \infty \right\},
\end{equation*}
to be equipped with the norm
\begin{equation*}
\|u\|_{W^{\beta,2}} := \left( \int_\Omega |u(x)|^2dx + \frac{C_{N,\beta}}{2} \int_\Omega\int_\Omega \frac{|u(x)-u(y)|^2}{|x-y|^{N+2\beta}} dxdy \right)^{1/2},
\end{equation*}
where $C_{N,\beta}$ is given by (\ref{constant}).
Let
\begin{equation*}
W_0^{\beta,2}(\Omega) = {\overline{\mathcal{D}(\Omega)}}^{W^{\beta,2}(\Omega)}.
\end{equation*}
Hence, $W_0^{\beta,2}(\Omega)$ is a closed subspace of $W^{\beta,2}(\Omega)$ containing $\mathcal{D}(\Omega)$.
Moreover, thanks to \cite[Theorem 10.1.1]{Adams-Hedberg-96},
\begin{equation*}
W_0^{\beta,2}(\Omega) = \{ u\in W^{\beta,2}(\mathbb{R}^N) : \tilde u=0 \ \text{on}\ \mathbb{R}^N \setminus \Omega \},
\end{equation*}
where $\tilde u$ is the quasi-continuous version (with respect to the capacity defined with the space $W^{\beta,2}(\Omega)$) of $u$.
One may easily show that the following defines an equivalent norm on the space $W_0^{\beta,2}(\Omega)$,
\begin{align}
|\|u\||^2_{W_0^{\beta,2}} & = \frac{C_{N,\beta}}{2} \int_\Omega \int_\Omega \frac{|u(x)-u(y)|^2}{|x-y|^{N+2\beta}} dxdy + \int_\Omega  V_\Omega(x) |u(x)|^2 dx  \notag \\
& = \frac{C_{N,\beta}}{2} \int_{\mathbb{R}^N} \int_{\mathbb{R}^N} \frac{|u(x)-u(y)|^2}{|x-y|^{N+2\beta}} dxdy.  \label{equiv-norm}
\end{align}
Here, $V_\Omega$ is the potential \eqref{G-pot}.

\begin{remark}
Either definition of the space $W^{\beta,2}_0(\Omega)$ makes sense for any arbitrary open set $\Omega\subset\mathbb{R}^3$ (not necessarily bounded).
Also, if $\Omega$ has Lipschitz boundary, then by \cite{BBC-03}, $W^{\beta,2}_0(\Omega)=W^{\beta,2}(\Omega)$ for every $0<\beta\le\frac{1}{2}$.
\end{remark}

From now on, we write $u\in W_0^{\beta,2}(\Omega)$ to mean $u\in W^{\beta,2}(\mathbb{R}^N)$ and $u=0$ on $\mathbb{R}^N\setminus\Omega.$
Let $a_{E,\beta}$ be the bilinear symmetric closed form with domain $D(a_{E,\beta})=W_0^{\beta,2}(\Omega)$ and defined for $u,v\in W_0^{\beta,2}(\Omega)$ by
\begin{align}
a_{E,\beta}(u,v) & = \frac{C_{N,\beta}}{2} \int_\Omega \int_\Omega \frac{(u(x)-u(y))(v(x)-v(y))}{|x-y|^{N+2\beta}}dxdy + \int_\Omega V_\Omega(x)u(x)v(x) dx  \notag \\
& = \frac{C_{N,\beta}}{2} \int_{\mathbb{R}^N} \int_{\mathbb{R}^N} \frac{(u(x)-u(y))(v(x)-v(y))}{|x-y|^{N+2\beta}}dxdy.  \label{form-1}
\end{align}
Let $A_{E,\beta}$ be the closed linear self-adjoint operator on $L^2(\Omega)$ associated with $a_{E,\beta}$ by
\begin{align}  \label{op-1}
\left\{ \begin{array}{l} D(A_{E,\beta}):=\{ u\in W^{\beta,2}_0(\Omega): \exists v\in L^2(\Omega),\ a_{E,\beta}(u,\vp)=(v,\vp)\ \forall\vp\in W^{\beta,2}_0(\Omega) \}  \\
A_{E,\beta}u=v. \end{array} \right.
\end{align}
According to \cite[Proposition 2.2]{Gal-Warma-15-1}, the operator $A_{E,\beta}$ on $L^2(\Omega)$ associated with the bilinear form $a_{E,\beta}$ is given by
\begin{align}  \label{op-2}
D(A_{E,\beta}):=\{u\in W_0^{\beta,2}(\Omega) : (-\Delta)_E^\beta u\in L^2(\Omega) \} \quad \text{and} \quad \forall u\in D(A_{E,\beta}), \quad A_{E,\beta}u:=(-\Delta)_E^\beta u.
\end{align}
Observe, comparing \eqref{eq11} and (\ref{equiv-norm})-(\ref{op-2}) shows, for all $u\in D(A_{E,\beta})$,
\begin{align}  \label{green-light}
((-\Delta)_E^\beta u,u) & = a_{E,\beta}(u,u) = |\|u\||^2_{W^{\beta,2}_0}.
\end{align}

Concerning the related regional problem discussed above, we let $a_{D,\beta}$ be the bilinear symmetric closed form with domain $D(a_{D,\beta})=W_0^{\beta,2}(\Omega)$ and defined for $u,v\in W_0^{\beta,2}(\Omega)$ by
\begin{align}
a_{D,\beta}(u,v) & = \frac{C_{N,\beta}}{2} \int_\Omega \int_\Omega \frac{(u(x)-u(y))(v(x)-v(y))}{|x-y|^{N+2\beta}}dxdy.  \label{form-2}
\end{align}
Let $A_{D,\beta}$ be the closed linear self-adjoint operator on $L^2(\Omega)$ associated with $a_{D,\beta}$ by
\begin{align}
\left\{ \begin{array}{l} D(A_{D,\beta}):=\{ u\in W^{\beta,2}_0(\Omega): \exists v\in L^2(\Omega),\ a_{D,\beta}(u,\vp)=(v,\vp)\ \forall\vp\in W^{\beta,2}_0(\Omega) \}  \\
A_{D,\beta}u=v. \end{array} \right.  \label{rop-1}
\end{align}
Then by \cite[Proposition 2.3]{Gal-Warma-15-1}, the operator $A_{D,\beta}$ on $L^2(\Omega)$ associated with the bilinear form $a_{D,\beta}$ is given by
\begin{align}
D(A_{D,\beta}):=\{u\in W_0^{\beta,2}(\Omega) : A^\beta_\Omega u\in L^2(\Omega)\} \quad \text{and} \quad \forall u\in D(A_{D,\beta}), \quad A_{D,\beta}u:=A^\beta_\Omega u.  \label{rop-2}
\end{align}

We introduce the spaces for the memory variable $\eta.$
First, the product in $H^\sigma(\Omega)$ for $\sigma\in\mathbb{R}$ and $u_1,u_2\in H^\sigma(\Omega)$ is defined by
\begin{align} \label{H-prod}
(u_1,u_2)_{H^\sigma}=(A_N^{\sigma/2}u_1,A_N^{\sigma/2}u_2).
\end{align}
For a nonnegative measurable function $\theta$ defined on $\mathbb{R}_+$ and for a Hilbert space $W$ (with inner-product $(\cdot,\cdot)_W$), let $L^2_\theta(\mathbb{R}_+;W)$ be the Hilbert space of $W$-valued functions on $\mathbb{R}_+$ equipped with the following product,
\[
(\phi_1,\phi_2)_{L^2_\theta(\mathbb{R}_+;W)}=\int_0^\infty \theta(s)(\phi_1(s),\phi_2(s))_W ds.
\]
Thus, we set
\[
\mathcal{M}_\sigma=L^2_\nu(\mathbb{R}_+;H^\sigma(\Omega)) \quad \text{and} \quad \mathcal{M}^{(0)}_\sigma=L^2_\nu(\mathbb{R}_+;H_{(0)}^\sigma(\Omega)) \quad \text{for $\sigma\in\mathbb{R}$},
\]
where $\nu=\nu(s)$ is the kernel from \eqref{p-1}.
Hence, for $\sigma\in\mathbb{R}$ and $\phi_1,\phi_2\in\mathcal{M}_\sigma$, using \eqref{H-prod} the product in $\mathcal{M}_{\sigma}$ (and $\mathcal{M}_{\sigma}^{(0)}$) can be expressed as
\[
(\phi_1,\phi_2)_{\mathcal{M}_\sigma}=\int_0^\infty \nu(s)(A_N^{\sigma/2}\phi_1(s),A_N^{\sigma/2}\phi_2(s)) ds.
\]
Naturally, we may also consider spaces of the form $H_\nu^k(\mathbb{R}_+;H^\sigma(\Omega))$ for $k\in\mathbb{N}$.

We mention that solutions of Problem {\textbf P} must also satisfy the mass conservation constraints,
\begin{equation}
\langle u(t)\rangle = \langle u_0(0)\rangle \quad \text{and} \quad \langle\eta^t(s)\rangle = 0 \quad \forall t>0,\ \forall s>0.  \label{con}
\end{equation}
With this, it is important to realize that the norm of $\eta^t$ in the space $\mathcal{M}_{-1}^{(0)}$ may be expressed {\em without} writing the average value of $\eta_0$ in \eqref{H-neg} by virtue of the second of \eqref{con}.
Indeed, for $\eta^t\in\mathcal{M}_{-1}^{(0)}$,
\begin{align*}
\|\eta^t\|_{\mathcal{M}_{-1}} & = \left( \int_0^\infty \nu(s)\|\eta^t(s)\|^2_{H^{-1}} ds \right)^{1/2}  \notag \\
& = \left( \int_0^\infty \nu(s)\|A_N^{-1/2}\eta^t(s)\|^2 ds \right)^{1/2}.  \notag
\end{align*}

We now state the basic function spaces we intend to study Problem {\textbf P} in.
For each $\beta\in(0,1)$ and $\sigma\in\mathbb{R}$, define the following (weak) energy Hilbertian phase-space $\mathcal{H}_{\beta,\sigma}:=W^{\beta,2}_0(\Omega)\times\mathcal{M}_{\sigma-1}^{(0)}$, equipped with the norm on $W^{\beta,2}_0(\Omega)\times\mathcal{M}_{\sigma-1}^{(0)}$ whose square is given by, for all $\phi=(u,\eta)^{tr}\in\mathcal{H}_{\beta,\sigma}$,
\begin{align}
\|\phi\|^2_{\mathcal{H}_{\beta,\sigma}} & := \|u\|^2_{W^{\beta,2}_0} + \|\eta^t\|^2_{\mathcal{M}_{\sigma-1}}.  \notag
\end{align}
Then, for each $M\ge0$, define the closed subset
\begin{align}
\mathcal{H}^M_{\beta,\sigma} = \{ \phi=(u,\eta)^{tr}\in \mathcal{H}_{\beta,\sigma} : |\langle u\rangle|\le M \}.  \label{space}
\end{align}
When we are concerned with the dynamical system associated with the model Problem {\textbf P}, we will utilize the following metric space,
\begin{align}
\mathcal{X}^M_{\beta,\sigma} := \left\{ \phi=(u,\eta)^{tr}\in\mathcal{H}^M_{\beta,\sigma} : F(u)\in L^1(\Omega) \right\},  \notag
\end{align}
endowed with the metric
\begin{align}
d_{\mathcal{X}^M_{\beta,\sigma}}(\phi_1,\phi_2) := \|\phi_1-\phi_2\|_{\mathcal{H}^M_{\beta,\sigma}} + \left| \int_\Omega F(u_1)dx - \int_\Omega F(u_2)dx \right|^{1/2}.  \notag
\end{align}

\begin{remark}
The embedding $\mathcal{H}^M_{\beta,1} \hookrightarrow \mathcal{H}^M_{\beta,0}$ is continuous but not compact, due to the presence of the second component $\mathcal{M}_{\sigma-1}^{(0)}$.
Indeed, see \cite{Pata-Zucchi-2001} for a counterexample.
\end{remark}

It is appropriate for us to state the various assumptions that may used on the kernel $\nu$.

\begin{description}
\item[(K1)] $\nu\in C^1(\mathbb{R}_+)\cap L^1(\mathbb{R}_+)$ and $\nu(s)\ge0$ for all $s\in\mathbb{R}_+$.
\item[(K2)] $\nu'(s)\le0$ for all $s\in\mathbb{R}_+$.
\item[(K3)] $k_0=\displaystyle\int_0^\infty \nu(s)ds>0$. (For the sake of simplicity we now assume $k_0=1$ throughout the rest of the paper.)
\item[(K4)] $\nu_0=\displaystyle\lim_{s\rightarrow0^+}\nu(s)<\infty.$
\item[(K5)] $\nu'(s)+\lambda\nu(s)\le0$ for a.a. $s\in\mathbb{R}_+$, for some $\lambda>0.$
\end{description}

Some remarks for these assumptions. By assumption (K2), the inequality holds for all $\eta^t\in D(T_r)$
\begin{equation}
(T_r\eta^t,\eta^t)_{\mathcal{M}_{-1}}\leq 0. 
\end{equation}
We remind the reader that the assumption (K5) is only required when we examine the asymptotic behavior of the solutions (and in that case, (K2) is redundant).

In order to formulate a suitable (abstract) evolution equation for $\eta^t$, we define the linear operator $T_r=-\partial_s$ with the domain
\[
D(T_r)=\{\eta^t\in\mathcal{M}^{(0)}_{-1}:\partial_s\eta^t\in\mathcal{M}^{(0)}_{-1},\ \eta^t(0)=0\}.
\]
It is well-known that $T_r$ is the infinitesimal generator of the right-translation semigroup on $\mathcal{M}_{-1}$; indeed, the following result comes from \cite[Theorem 3.1]{Grasselli&Pata02-2}.

\begin{proposition}  \label{t:generator-T}
The operator $T_r$ with domain $D(T_r)$ is an infinitesimal generator of a strongly continuous semigroup of contractions on $\mathcal{M}_{-1}$, denoted $e^{T_rt}$.
\end{proposition}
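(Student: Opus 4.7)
The plan is to apply the Lumer--Phillips theorem on the Hilbert space $\mathcal{M}_{-1}^{(0)}$: I will verify that $T_r$ is densely defined, dissipative, and that $\lambda I - T_r$ is surjective for some (equivalently, all) $\lambda>0$. Density of $D(T_r)$ in $\mathcal{M}_{-1}^{(0)}$ is essentially standard since compactly supported smooth $H^{-1}_{(0)}$-valued functions vanishing near $s=0$ are dense in $L^2_\nu(\mathbb{R}_+;H^{-1}_{(0)}(\Omega))$ under assumption (K1).

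For dissipativity, I would compute, for $\eta\in D(T_r)$,
\begin{align*}
(T_r\eta,\eta)_{\mathcal{M}_{-1}} &= -\int_0^\infty \nu(s)\bigl(A_N^{-1/2}\partial_s\eta(s),A_N^{-1/2}\eta(s)\bigr)\,ds\\
&= -\tfrac{1}{2}\int_0^\infty \nu(s)\,\partial_s\|A_N^{-1/2}\eta(s)\|^2\,ds,
\end{align*}
and then integrate by parts. The boundary term at $s=0$ vanishes thanks to $\eta(0)=0$; the boundary term at $s=\infty$ vanishes because $\eta\in\mathcal{M}_{-1}^{(0)}$ and $\nu\in L^1(\mathbb{R}_+)$ force an approximating sequence with $\nu(s)\|A_N^{-1/2}\eta(s)\|^2\to 0$. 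What remains is $\tfrac{1}{2}\int_0^\infty \nu'(s)\|A_N^{-1/2}\eta(s)\|^2\,ds$, which is $\le 0$ by (K2); this gives $(T_r\eta,\eta)_{\mathcal{M}_{-1}}\le 0$.

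For the range condition, given $f\in\mathcal{M}_{-1}^{(0)}$ and $\lambda>0$, the equation $\lambda\eta+\partial_s\eta = f$ with $\eta(0)=0$ admits the explicit solution
\[
\eta(s) = \int_0^s e^{-\lambda(s-\sigma)}f(\sigma)\,d\sigma,
\]
and I would verify $\eta\in D(T_r)$ by bounding $\|\eta\|_{\mathcal{M}_{-1}}$ via Young's convolution inequality (the $L^1$-kernel $e^{-\lambda\cdot}$ acts on the $L^2_\nu$-function $f$, using $\nu\in L^1$ to control the weight), together with the mean-zero constraint which passes from $f$ to $\eta$ by linearity. Then $\partial_s\eta = f-\lambda\eta\in\mathcal{M}_{-1}^{(0)}$ and $\eta(0)=0$, so $\eta\in D(T_r)$.

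The main obstacle is the careful justification of the boundary term at infinity in the dissipativity computation, since $\nu$ is only assumed integrable and $C^1$ (not necessarily compactly supported), and $\|A_N^{-1/2}\eta(s)\|^2$ need not decay pointwise for a generic $\eta\in D(T_r)$. I would circumvent this by first proving the estimate for $\eta$ in a dense subclass (say, continuously differentiable $\eta$ with compact support in $(0,\infty)$) where both boundary contributions vanish trivially, and then extending by density once the resolvent estimate from Lumer--Phillips provides closability. Once dissipativity and range are in hand, Lumer--Phillips yields that $T_r$ generates a $C_0$-semigroup of contractions $e^{T_r t}$ on $\mathcal{M}_{-1}^{(0)}$, which one recognizes as the right-translation semigroup $(e^{T_r t}\eta)(s)=\eta(s-t)\chi_{\{s>t\}}$.
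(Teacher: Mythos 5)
The paper does not actually prove this proposition; it simply cites \cite[Theorem 3.1]{Grasselli&Pata02-2}, so there is no in-paper argument to compare against. Your Lumer--Phillips route is the standard proof underlying that citation, and the overall structure (density of the domain, dissipativity via integration by parts in $s$, solvability of $\lambda\eta+\partial_s\eta=f$ with $\eta(0)=0$) is the right one. Your treatment of the dissipativity boundary terms is acceptable: at $s=0$ the term vanishes by $\eta(0)=0$ together with (K4), and at infinity one notes that $s\mapsto\nu(s)\|A_N^{-1/2}\eta(s)\|^2$ is in $L^1(\mathbb{R}_+)$, so its $\liminf$ is zero, while the existence of the limit follows from the monotone convergence of $\int_0^S\nu'(s)\|A_N^{-1/2}\eta(s)\|^2\,ds$ under (K2); your fallback of proving the inequality on a core is also fine, provided you check that the core is a core in the graph norm.

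The one step whose justification as written would fail is the resolvent bound in the range condition. Young's convolution inequality does not transfer to the weighted space $L^2_\nu(\mathbb{R}_+;H^{-1}_{(0)}(\Omega))$ merely because $\nu\in L^1(\mathbb{R}_+)$: if $\nu$ were small where $f$ is large and large slightly to the right of that region, the convolution $\eta(s)=\int_0^s e^{-\lambda(s-\sigma)}f(\sigma)\,d\sigma$ could fail to lie in $\mathcal{M}_{-1}$ even though $f$ does. The mechanism that actually makes the estimate work is the monotonicity assumption (K2), not integrability: writing $\eta=\int_0^\infty e^{-\lambda t}S(t)f\,dt$ with $(S(t)f)(s)=f(s-t)\chi_{\{s>t\}}$, one has
\begin{equation*}
\|S(t)f\|^2_{\mathcal{M}_{-1}}=\int_0^\infty \nu(\sigma+t)\|A_N^{-1/2}f(\sigma)\|^2\,d\sigma\le\|f\|^2_{\mathcal{M}_{-1}}
\end{equation*}
precisely because $\nu(\sigma+t)\le\nu(\sigma)$, whence $\|\eta\|_{\mathcal{M}_{-1}}\le\lambda^{-1}\|f\|_{\mathcal{M}_{-1}}$ by Minkowski's integral inequality. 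With that substitution (and the mean-zero constraint passing to $\eta$ by linearity, as you note), the Lumer--Phillips hypotheses are verified and the conclusion follows; the identification of $e^{T_rt}$ with right translation is then immediate from uniqueness of the generated semigroup.
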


As a consequence, we also have (cf., e.g. \cite[Corollary IV.2.2]{Pazy83}).

\begin{corollary}
\label{t:memory-regularity-1} Let $T>0$ and assume $g\in L^{1}(0,T;H^{-1}(\Omega))$.
Then, for every $\eta_{0}\in \mathcal{M}_{-1}$, the Cauchy problem for $\eta^t,$
\begin{equation}
\left\{  \begin{array}{ll}
\partial_{t}\eta^t=T_r\eta^t+g(t), & \text{for}~~t>0, \\
\eta^{0}=\eta_{0}, &
\end{array}  \right.  \label{memory-1}
\end{equation}
has a unique (mild) solution $\eta\in C([0,T];\mathcal{M}_{-1})$ which can be explicitly given as
\begin{equation}
\eta^{t}(s)=\left\{  \begin{array}{ll}
\displaystyle\int_{0}^{s}g(t-y)dy, & \text{for}~~0<s\leq t, \\
\displaystyle\eta_{0}(s-t)+\int_{s}^{t}g(t-y)dy, & \text{for }~~s>t,
\end{array}  \right.  \label{representation-formula-1}
\end{equation}
cf. also \cite[Section 3.2]{CPS06} and \cite[Section 3]{Grasselli&Pata02-2}.
\end{corollary}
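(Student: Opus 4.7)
The statement follows by direct application of the variation-of-parameters formula for $C_0$-semigroups, once the source term is correctly lifted into $\mathcal{M}_{-1}$. The plan has three parts: identify the semigroup explicitly, invoke the abstract Cauchy theory, and unfold the formula.

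First, I would make the semigroup $e^{T_r t}$ produced by Proposition \ref{t:generator-T} explicit. Since $T_r = -\partial_s$ together with the boundary condition $\eta^t(0) = 0$ (built into $D(T_r)$), the generated semigroup is right-translation with zero extension:
$$(e^{T_r t}\psi)(s) = \begin{cases} 0, & 0 < s \le t, \\ \psi(s-t), & s > t. \end{cases}$$
This is the standard form for the shift semigroup on a weighted $L^2$ space, and can be checked either by direct verification that $e^{T_r t}$ is contractive on $\mathcal{M}_{-1}$ using assumption (K2), or by solving $\partial_t \eta + \partial_s \eta = 0$ along characteristics.

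Second, the source $g \in L^1(0,T;H^{-1}(\Omega))$ must be interpreted as an $\mathcal{M}_{-1}$-valued datum. Motivated by the original equation $\partial_t\eta + \partial_s\eta = -\Delta\mu(t)$, the natural lifting is to regard $g(t)$ as the $s$-independent function $s \mapsto g(t)$. By (K3) this lies in $\mathcal{M}_{-1}$ with norm controlled by $\|g(t)\|_{H^{-1}}$, so the lifted map inherits $L^1$ integrability in time. Pazy's Corollary IV.2.2 then yields a unique mild solution $\eta \in C([0,T];\mathcal{M}_{-1})$ given by the Duhamel formula
$$\eta^t = e^{T_r t}\eta_0 + \int_0^t e^{T_r(t-\tau)} g(\tau)\, d\tau.$$

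Third, substituting the explicit form of $e^{T_r t}$ into this Duhamel formula produces the claimed representation \eqref{representation-formula-1}. For $s > t$ the first term contributes $\eta_0(s-t)$, while $(e^{T_r(t-\tau)}g(\tau))(s) = g(\tau)$ because $s - (t-\tau) > 0$; a change of variables $y = t-\tau$ converts the $\tau$-integral into an integral of $g(t-y)$. For $0 < s \le t$ the first term vanishes and only those $\tau$ with $s - (t-\tau) > 0$, i.e.\ $\tau > t-s$, contribute, yielding $\int_0^s g(t-y)\, dy$ after the same substitution. There is no genuine obstacle here; the only care needed is the proper lifting of the scalar-in-$s$ source $g(t)$ into $\mathcal{M}_{-1}$ and a clean change of variables. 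As a cross-check, one can verify directly that the displayed formula solves $\partial_t\eta + \partial_s\eta = g$ along characteristics, with the correct trace at $s=0$ and initial datum $\eta_0$, and then appeal to uniqueness in the class of mild solutions.
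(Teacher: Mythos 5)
Your proposal is correct and is exactly the argument the paper intends: the paper offers no proof beyond citing Pazy's Corollary IV.2.2, and your three steps (explicit right-translation semigroup with zero extension, lifting $g(t)$ to the $s$-independent element of $\mathcal{M}_{-1}$ via (K1) and (K3), and unfolding the Duhamel formula) are precisely the computation carried out in the cited references \cite[Section 3]{Grasselli&Pata02-2} and \cite[Section 3.2]{CPS06}. Note only that your derivation produces $\int_{0}^{t}g(t-y)\,dy$ in the branch $s>t$, which is the formula appearing in those references; the lower limit $s$ printed in \eqref{representation-formula-1} is a typo in the paper, not a defect in your argument.
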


\section{Variational formulation and well-posedness}

To begin this section, we state the assumptions on the nonlinear term $F$ and report some important consequences of these assumptions.
These assumptions on $F$ are based on \cite{Frigeri&Grasselli12,Gal&Grasselli14} and can be found in \cite[Section 3]{Gal-NCHEFDBC-16}.

\begin{description}
\item[(N1)] $F\in C^{2}_{loc}(\mathbb{R})$ and there exists $c_F>0$ such that, for all $r\in\mathbb{R},$
\begin{align}
F''(r) \ge -c_F.  \notag
\end{align}

\item[(N2)] There exists $c_F>0$ and $p\in(1,2]$ such that, for all $r\in\mathbb{R},$
\begin{align}
|F'(r)|^p \le c_F (|F(r)| + 1).  \notag
\end{align}

\item[(N3)] There exist $C_1,C_2>0$ such that, for all $r\in\mathbb{R},$
\begin{align}
F(r)\ge C_1|r|^{p/(p-1)} - C_2.  \notag
\end{align}
\end{description}

The last assumption is not needed to obtain the existence of weak solutions, but it will be relied upon later when we seek the existence of strong/regular solutions and uniqueness of these solutions.

\begin{description}
\item[(N4)] There exist $\rho\ge2$ and $C_3>0$ such that, for all $r\in\mathbb{R},$
\begin{align}
|F''(r)|\le C_3(1+|r|^{\rho-2}).
\end{align}
\end{description}

The following remarks are from \cite{Gal-NCHEFDBC-16}.
Assumption (N1) implies that the potential $F$ is a quadratic perturbation of some strictly convex function; i.e., there holds,
\begin{equation}
F(r) = G(r) - \frac{c_F}{2}r^2,  \label{convex}
\end{equation}
with $G\in C^{2}(\mathbb{R})$ strictly convex as $G''\ge 0$ in $\Omega.$
Also with (N1), for each $M\ge0$ there are constants $C_i>0$, $i=3,\dots,6$, (with $C_4$ and $C_5$ depending on $M$ and $F$) such that, for all $r\in\mathbb{R},$
\begin{align}
F(r)-C_3\le C_4(r-M)^2 + F'(r)(r-M),  \label{Fcons-1}
\end{align}
\begin{align}
\frac{1}{2}|F'(s)|(1+|r|) \le F'(r)(r-M) + C_5,  \label{Fcons-2}
\end{align}
(cf. \cite[Equations (4.7) and (4.8)]{CGG11}) and
\begin{align}
|F(r)| - C_6 \le |F'(r)|(1+|r|).  \label{Fcons-3}
\end{align}
The last inequality appears in \cite[page 8]{Gal&Miranville09}.
With the positivity condition (N3), it follows that, for all $r\in\mathbb{R},$
\begin{equation}
|F'(r)| \le c_F(|F(r)| + 1).  \label{Fcons-3.1}
\end{equation}
Assumption (N2) allows for {\em arbitrary} polynomial growth $\bar p=p/(p-1)$ in the potential $F$.
Significantly, the double-well potential $F(r)=(r^2-1)^2$ satisfies (N2) with $p=4/3$ and (N4) with $p=2$.

We are now ready to introduce the variational/weak formulation of Problem {\textbf P}.

\begin{definition}  \label{d:ws}
Let $T>0$ and $\phi_0=(u_0,\eta_0)^{tr}\in\mathcal{H}^M_{\beta,0}=W^{\beta,2}_0(\Omega)\times\mathcal{M}^{(0)}_{-1}$ be such that $F(u_0)\in L^1(\Omega).$
A pair $\phi=(u,\eta)$ satisfying 
\begin{align}
\phi=(u,\eta) & \in L^\infty(0,T;\mathcal{H}^M_{\beta,0}),  \label{wk-reg-1} \\
\partial_t u & \in L^2(0,T;H^{-1}(\Omega)),  \label{wk-reg-2} \\
\partial_t\eta & \in L^2(0,T;H^{-1}_\nu(\mathbb{R}_+;H^{-1}_{(0)}(\Omega))),  \label{wk-reg-2.5} \\
\mu & \in L^2(0,T;W^{-\beta,2}(\Omega)),  \label{wk-reg-3} \\
F'(u)  & \in L^\infty(0,T;L^p(\Omega))  \label{wk-reg-4}
\end{align}
is called a {\sc{weak solution}} to Problem {\textbf P} on $[0,T]$ with initial
data $\phi_0=(u_0,\eta_0)\in\mathcal{H}^M_{\beta,0}$ if the following identities hold almost everywhere in $(0,T)$, and for all $v\in H^1(\Omega),$ $\xi\in W^{\beta,2}_0(\Omega)\cap L^p(\Omega)$ and $\zeta\in\mathcal{M}_1$:
\begin{align}
\langle \partial_tu,v\rangle_{H^{-1}\times H^1} + \int_0^\infty \nu(s)\langle\eta^t(s),v\rangle_{H^{-1}\times H^1} ds & = 0,  \label{var-1} \\
a_{E,\beta}(u,\xi) + \langle F'(u),\xi\rangle_{W^{-\beta,2}\times W^{\beta,2}_0} + \alpha \langle\partial_tu,\xi\rangle_{W^{-\beta,2}\times W^{\beta,2}_0} & = \langle\mu,\xi\rangle_{W^{-\beta,2}\times W^{\beta,2}_0},  \label{var-2} \\
(\partial_t\eta^t,\zeta)_{\mathcal{M}_{-1}} - (T_r\eta^t,\zeta)_{\mathcal{M}_{-1}} & = (\mu,\zeta)_{\mathcal{M}_0}.  \label{var-3}
\end{align}
Also, the initial conditions hold in the $L^2$-sense
\begin{align}
u(0) = u_0 \quad \text{and} \quad \eta^0 = \eta_0.  \label{L2-initial-theta}
\end{align}
Finally, we say that $\phi=(u,\eta)^{tr}$ is a {\sc{global weak solution}} of Problem {\textbf P} if it is a weak solution on $[0,T]$, for any $T>0.$
\end{definition}

\begin{remark}
It is important to note that although $\eta_0$ is defined by \eqref{eta-1} and \eqref{mu-ic}, $\eta_0$ may be taken to be initial data {\em independent} of $u$.
Henceforth we will consider a more general problem with respect to the original one.
\end{remark}

\begin{remark}
Concerning equation \eqref{var-3} and the representation formula (\ref{representation-formula-1}), we have
\begin{equation*}
T_r\eta^{t}(s)=-\partial_{s}\eta^{t}(s)=\left\{  \begin{array}{ll}
-\Delta \mu(t-s) & \text{for}~0<s\leq t, \\
-\partial_{s}\eta_{0}(s-t)-\Delta \mu(t-s) & \text{for}~s>t.
\end{array}  \right.
\end{equation*}
Thus, when given $\eta_{0}\in\mathcal{M}_{-1}^{(0)},$ then $T_r\eta^t\in H_{\nu}^{-1}(\mathbb{R}_{+};H^{-1}(\Omega)),$ for each $t\in(0,T)$, by virtue of \eqref{wk-reg-3}.
Moreover, taking $\zeta=1$ in the variational equation
\begin{align}
(\partial_t\eta^t,\zeta)_{\mathcal{M}_{-1}} - (T_r\eta^t,\zeta)_{\mathcal{M}_{-1}} & = -\int_0^\infty \nu(s)(-\Delta\mu,\zeta)_{H^{-1}\times H^1}ds, \notag
\end{align}
we find, for all $s>t$,
\begin{align}
\frac{\partial}{\partial t}\langle \eta^t(s) \rangle + \frac{\partial}{\partial s}\langle \eta_0(s-t) \rangle + \langle \Delta\mu(t-s) \rangle = \langle \Delta\mu(t-s) \rangle k_0. \notag 
\end{align}
We know that $\eta_0\in\mathcal{M}_{-1}^{(0)}$ and $k_0=1$, hence
\begin{align}
\frac{\partial}{\partial t}\langle \eta^t(s) \rangle = 0, \notag 
\end{align}
and it follows that 
\[
\langle\eta^t(s)\rangle=0\quad\forall t\ge0. \label{pre-mass-con}
\]
\end{remark}

\begin{remark}
In the Cahn-Hilliard model, it is well-known that the average value of $u$ is conserved (cf. e.g. \cite[Section III.4.2]{Temam01}). 
A similar property holds here for our problem.
Indeed, we may choose the test function $v=1$ in \eqref{var-1} which yields
\begin{align}
\frac{\partial}{\partial t}\langle u(t)\rangle + \int_0^\infty\nu(s)\langle \eta^t(s)\rangle ds=0. \notag 
\end{align}
By \eqref{pre-mass-con}, there holds $\langle \eta^t(s)\rangle=0$ for all $t>0$ and for all $s>0$. Hence, we recover {\em{conservation of mass}}
\begin{align}
\langle u(t) \rangle = \langle u_0 \rangle \quad \text{and} \quad \langle \partial_tu(t) \rangle = 0, \quad \forall t\ge0.  \label{mass-con}
\end{align}
\end{remark}

\begin{remark}
Before we continue to the existence statement, it is worthwhile to recall Theorem \ref{t:append-1} (d) in Appendix \ref{s:append-a} for which the following embedding holds
\begin{equation}
D(A_{E,\beta}) \hookrightarrow L^\infty(\Omega), \quad\quad \forall\beta\in(\frac{N}{4},1), \quad \text{for} \ N=1,2,3.  \label{beta-range}
\end{equation}
\end{remark}

\begin{theorem}  \label{t:wk-sols}
Let $T>0$ and $\phi_0=(u_0,\eta_0)^{tr}\in\mathcal{H}^M_{\beta,0}=W^{\beta,2}_0(\Omega)\times\mathcal{M}^{(0)}_{-1}$ for $\beta\in(\frac{N}{4},1)$, $N=1,2,3$, be such that $F(u_0)\in L^1(\Omega).$
Assume $\alpha>0$ and that (K1)-(K4) and (N1)-(N3) hold.
Problem {\textbf P} admits at least one weak solution $\phi=(u,\eta)$ on $(0,T)$ according to Definition \eqref{d:ws} with the additional regularity
\begin{align}
u & \in L^\infty(0,T;L^{p/(p-1)}(\Omega)),  \label{wk-reg-5} \\
\sqrt{\alpha}\partial_tu & \in L^2(\Omega\times(0,T)),  \label{wk-reg-5.5} \\
\eta & \in L^2(0,T;L^2_{-\nu'}(\mathbb{R}_+;H^{-1}_{(0)}(\Omega))),  \label{wk-reg-6} \\
F(u)  & \in L^\infty(0,T;L^1(\Omega)), \quad F'(u)\in L^\infty(0,T;L^1(\Omega)).  \label{wk-reg-7}
\end{align}
for any $T>0.$
Furthermore, setting
\begin{align}
\mathcal{E}(t) := |\|u(t)\||^2_{W^{\beta,2}_0} + 2(F(u(t)),1) + \|\eta^{t}\|^2_{\mathcal{M}_{-1}} + C  \label{wk-1}
\end{align}
for some $C>0$ sufficiently large, the following energy equality holds for every such weak solution,
\begin{align}
\mathcal{E}(t) + 2 \int_0^t \left( \alpha\|\partial_tu(\tau)\|^2 d\tau - \int_0^\infty \nu'(s)\|\eta^\tau(s)\|^2_{H^{-1}}ds \right) d\tau = \mathcal{E}(0).  \label{wk-2}
\end{align}
\end{theorem}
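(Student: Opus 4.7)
The plan is to construct weak solutions via a Faedo--Galerkin scheme using the spectral decomposition of $A_{E,\beta}$, coupled with the explicit representation formula (\ref{representation-formula-1}) to handle the memory variable. Let $\{w_j\}_{j\ge1}$ denote an $L^2$-orthonormal basis of eigenfunctions of $A_{E,\beta}$, which by (\ref{beta-range}) lie in $L^\infty(\Omega)\cap W^{\beta,2}_0(\Omega)$, and set $V_n:=\mathrm{span}\{w_1,\ldots,w_n\}$. One seeks approximate solutions of the form $u_n(t)=\sum_{j=1}^n c_{n,j}(t)w_j$, with $\mu_n(t)\in V_n$ and $\eta^t_n$ obtained from (\ref{representation-formula-1}) with $g=-\Delta\mu_n$, satisfying the projected versions of (\ref{var-1})--(\ref{var-3}) on $V_n$. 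This yields a nonlinear ODE system for the coefficients $c_{n,j}$, which is locally solvable by Cauchy--Peano since $F\in C^2_{\mathrm{loc}}$ by (N1).

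The key step is to reproduce the formal energy identity at the Galerkin level. Testing (\ref{p-1}) with $\mu_n$, (\ref{p-2}) with $\partial_t u_n$, and (\ref{p-3}) with $\eta_n^t$ in $\mathcal{M}_{-1}$, the cross terms $\pm(\eta_n^t,\mu_n)_{\mathcal{M}_0}$ arising from the Cahn--Hilliard structure cancel, leaving
\[
\frac{d}{dt}\Bigl(|\|u_n\||^2_{W^{\beta,2}_0}+2(F(u_n),1)+\|\eta_n^t\|^2_{\mathcal{M}_{-1}}\Bigr)+2\alpha\|\partial_t u_n\|^2-\int_0^\infty\nu'(s)\|\eta_n^t(s)\|^2_{H^{-1}}\,ds=0.
\]
Combined with (K2) and the coercivity of $F$ provided by (N3), integration in time yields uniform bounds (in $n$) for $u_n$ in $L^\infty(0,T;W^{\beta,2}_0\cap L^{p/(p-1)})$, $\sqrt{\alpha}\,\partial_t u_n$ in $L^2(\Omega\times(0,T))$, $\eta_n$ in $L^\infty(0,T;\mathcal{M}_{-1}^{(0)})\cap L^2(0,T;L^2_{-\nu'}(\mathbb{R}_+;H^{-1}_{(0)}))$, and, via (N2), $F(u_n)$ and $F'(u_n)$ in $L^\infty(0,T;L^1(\Omega))$. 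Testing (\ref{var-2}) against basis elements of unit $W^{\beta,2}_0$-norm, together with (\ref{Fcons-2})--(\ref{Fcons-3}), bounds $\mu_n$ in $L^2(0,T;W^{-\beta,2})$; the equations (\ref{var-1}) and (\ref{var-3}) then furnish the claimed bounds on $\partial_t u_n$ and $\partial_t \eta_n$.

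Passage to the limit uses Banach--Alaoglu to extract a weakly-$*$ convergent subsequence in each of the above spaces, plus the Aubin--Lions lemma with the compact Sobolev embedding $W^{\beta,2}_0\hookrightarrow L^2$ to obtain $u_n\to u$ strongly in $L^2(\Omega\times(0,T))$ and hence a.e. Combined with the uniform $L^1$-bound on $F(u_n)$, the de la Vall\'ee-Poussin criterion applied through (N2) and (\ref{Fcons-3.1}) gives equi-integrability of $F'(u_n)$ in $L^p$, so that Vitali's theorem yields $F'(u_n)\to F'(u)$ in $L^q(\Omega\times(0,T))$ for every $q<p$; this suffices to pass to the limit in the nonlinear term of (\ref{var-2}), while the remaining terms are linear and handled by weak convergence. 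The initial conditions (\ref{L2-initial-theta}) are recovered from the continuity granted by the time-derivative bounds.

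Finally, the energy identity (\ref{wk-2}) is obtained as an inequality by weak lower semicontinuity of the norms in $\mathcal{E}$ applied to the Galerkin identity. For the matching reverse inequality, one tests the limit equations directly: (\ref{var-2}) with $\partial_t u$, (\ref{var-1}) with $\mu$, and (\ref{var-3}) with $\eta^t$. The delicate point is justifying the chain rule $\frac{d}{dt}(F(u),1)=\langle F'(u),\partial_t u\rangle$, which I would handle via the convex--quadratic splitting $F=G-\tfrac{c_F}{2}r^2$ from (\ref{convex}) combined with the $L^2$-regularity of $\partial_t u$ secured by $\alpha>0$. I expect the principal obstacle to be precisely this combination of the arbitrary polynomial growth of $F'$ allowed by (N2) with the low regularity of $\mu$ in $W^{-\beta,2}$; the viscosity term $\alpha\partial_t u$ in (\ref{pde2}) is the mechanism that makes this coupling tractable, since (\ref{wk-reg-5.5}) guarantees it pairs admissibly with every other term in the identity.
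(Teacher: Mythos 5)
Your overall strategy---Galerkin approximation in the eigenbasis of $A_{E,\beta}$, cancellation of the cross terms $\pm(\eta^t_n,\mu_n)_{\mathcal{M}_0}$ to obtain the energy identity at the approximate level, Aubin--Lions compactness with $W^{\beta,2}_0(\Omega)\hookrightarrow L^2(\Omega)$, and the convex--quadratic splitting \eqref{convex} to justify the chain rule $\frac{d}{dt}(F(u),1)=\langle F'(u),\partial_tu\rangle$ in the proof of the energy equality---is essentially the paper's proof. The minor variations are harmless or even slightly better: you generate $\eta^t_n$ from the representation formula \eqref{representation-formula-1} with $g=-\Delta\mu_n$ instead of expanding it in a separate basis $\{z_j\}\subset D(T_r)$ as the paper does (you should then still justify the integration by parts in $s$ behind $(T_r\eta^t_n,\eta^t_n)_{\mathcal{M}_{-1}}\le\frac12\int_0^\infty\nu'(s)\|\eta^t_n(s)\|^2_{H^{-1}}ds$, but this is standard under (K1)--(K2)); and your Vitali/de la Vall\'ee-Poussin argument for the convergence of $F'(u_n)$ is, if anything, more careful than the paper's one-line appeal to (N1) and a.e.\ convergence.

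There is, however, one genuine gap: the treatment of the initial data. All of your uniform bounds flow from integrating the Galerkin energy identity in time, so you need $\sup_n\mathcal{E}_n(0)<\infty$, and in particular $\sup_n\int_\Omega F(u_{0n})\,dx<\infty$, where $u_{0n}$ is the projection of $u_0$ onto $V_n$. Under the hypotheses of the theorem you only know $u_0\in W^{\beta,2}_0(\Omega)$ and $F(u_0)\in L^1(\Omega)$; since (N2)--(N3) permit arbitrary polynomial growth of $F$, the condition $F(u_0)\in L^1(\Omega)$ is not implied by the Sobolev regularity of $u_0$, and there is no reason the spectral projections $u_{0n}$ should keep $F(u_{0n})$ bounded in $L^1(\Omega)$, let alone convergent to $F(u_0)$. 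The paper resolves this with a two-stage argument: it first takes $u_0\in D(A_{E,\beta})$, where the restriction $\beta\in(\frac{N}{4},1)$ and the embedding \eqref{beta-range} give $u_{0n}\to u_0$ in $L^\infty(\Omega)$ and hence $(F(u_{0n}),1)\to(F(u_0),1)$, and then recovers general data $\phi_0\in\mathcal{H}^M_{\beta,0}$ with $F(u_0)\in L^1(\Omega)$ by a separate density/approximation step. Your proposal invokes \eqref{beta-range} only for the eigenfunctions and never explains where the bound on $(F(u_{0n}),1)$ comes from, so as written the key estimate \eqref{eq10.5} is unjustified for the data allowed by the statement.
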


\begin{proof}
The proof proceeds in several steps.
The existence proof begins with a Faedo-Galerkin approximation procedure in which we later pass to the limit.
We first assume that $u_0\in D(A_{E,\beta})$.
(This assumption will be used to show that there is a sequence $\{u_{0n}\}_{n=1}^\infty$ such that $u_{0n}\rightarrow u_{0}$ in $D(A_{E,\beta})$ as well as $L^\infty(\Omega)$ per \eqref{beta-range}, which will be important in light of the fact that $F(u_{0n})$ is of arbitrary polynomial growth per assumptions (N1)-(N3).)
The existence of a weak solution for $u_0\in W^{\beta,2}_0(\Omega)$ with $F(u_0)\in L^1(\Omega)$ will follow from a density argument. 
To establish the equality in the energy identity, we exploit the fact that the potential $F$ is a quadratic perturbation of some strictly convex function.

{\em{Step 1: The Galerkin approximation.}} To begin, we
introduce the family $\{v_j\}_{j\geq1}$ of eigenvectors of the
fractional Laplacian $A_{E,\beta}$ which exist thanks to Theorem \ref{t:append-1} in Appendix \ref{s:append-a}.
Moreover, there is a family $\{w_j\}_{j\geq1}$ consisting of the eigenvectors of
the Neumann-Laplacian $A_N$, and with this, we define the smooth sequence of $\{z_j\}_{j\ge1}\subset D(T_r)\cap W^{1,2}_\nu(\mathbb{R}_+;H^1_{(0)}(\Omega))$ by $z_j=b_jw_j$ such that $\{b_j\}_{j\ge1}\subset C_c^\infty(\mathbb{R}_+)$ is an orthonormal basis for $L^2_\nu(\mathbb{R}_+)$.
Using these we define the following finite-dimensional spaces:
\begin{align} V^n={\rm
span}\{v_1,v_2,\dots,v_n\},\quad W^n={\rm
span}\{w_1,w_2,\dots,w_n\},\quad \mathcal{M}^n={\rm
span}\{z_1,z_2,\dots,z_n\},\quad   \label{eq20}
\end{align}
and set 
\[
V^\infty = \bigcup_{n=1}^\infty V^n, \quad W^\infty = \bigcup_{n=1}^\infty W^n, \quad \mathcal{M}^\infty = \bigcup_{n=1}^\infty \mathcal{M}^n. \quad 
\]
Clearly, $V^\infty$ is a dense subspace of $W^{\beta,2}_0(\Omega)$ and $W^\infty$ is a dense subspace of $H^1(\Omega)$.
In addition, $\mathcal{M}^\infty$ is a dense subspace of $\mathcal{M}^{(0)}_{-1}.$
For $T>0$ fixed, we look for two functions of the form on $(0,T)$,
\begin{equation}
u _{n}(t)=\sum\limits_{k=1}^{n}a_{k}^{(n)}(t)v_{k} \quad \text{and} \quad \eta^t_n(s)=\sum\limits_{k=1}^{n}c_{k}^{(n)}(t) z_{k}, \label{appx-soln}
\end{equation}
where $a_j^{(n)}$ and $c_j^{(n)}$ are assumed to be (at least) $C^2([0,T])$ for each $j=1,2,\dots$ an for each $n=1,2,\dots,$ that solve the following approximating Problem $\mathbf{P}_{n}$:
\begin{align}
(\partial_tu_n,v) + \int_0^\infty \nu(s)(\eta^t_n(s),v)ds & = 0  \label{eq21} \\
a_{E,\beta}(u_n,\xi) + (F'(u_n),\xi) + \alpha(\partial_tu_n,\xi) & = (\mu_n,\xi)  \label{eq22} \\
(\partial_t\eta^t_n,\zeta)_{\mathcal{M}_{-1}} - (T_r\eta^t_n,\zeta)_{\mathcal{M}_{-1}} & = (\mu_n,\zeta)_{\mathcal{M}_0} \label{eq23}  \\ 
u_{n}(0)=u_{0n}, \quad \eta_{n}^{0}=\eta_{0n} \label{eq24}
\end{align}
for every $v\in V^n,$ $\xi\in W^n$ and $\zeta\in \mathcal{M}^n$, and where $u_{0n}$ and $\eta_{0n}$ denote the finite-dimensional projections of $u_0$ and $\eta_0$ onto $V^n$ and $\mathcal{M}^n$, respectively. 
This approximating problem is equivalent to solving a Cauchy problem
for a system of ordinary differential equations (indeed, cf. e.g. \cite[page 131]{CGG11}). 
Hence, the Cauchy-Lipschitz theorem ensures that there exists a
$T_{n}\in(0, \infty]$ such that this approximating system has
a unique maximal solution.

{\em{Step 2: {\em{A priori}} estimates.}}
We now derive some a priori estimates in order to show that
$T_{n}=\infty$ for every $n\geq1$ and that the sequences of
$u_{n}, \eta^t_{n}, \mu_{n}$ are bounded in suitable functional
spaces. By using $v=\mu_n$ as a test function in \eqref{eq21} and
$\xi=\partial_tu_n$ as a test function in \eqref{eq22} we obtain
\begin{align}
& (\partial_tu_n,\mu_n) + \int_0^\infty \nu(s)(\eta^t_n(s),\mu_n)ds = 0  \label{eq5} \\
& (\mu_n,\partial_tu_n) = ((-\Delta)_E^\beta u_n,\partial_tu_n) + (F'(u_n),\partial_tu_n) + \alpha\|\partial_tu_n\|^2,
\label{eq6}
\end{align}
and taking $\zeta=\eta^t_n$ as a test function in \eqref{eq23} yields (for the products in $\mathcal{M}_{-1}$, this is multiplication by $(-\Delta)^{-1}\eta^t_{n}$ in
$\mathcal{M}_0$)
\begin{align}
\int_0^\infty\nu(s)\left(\int_\Omega\partial_t\eta_n^t(x,s)(-\Delta)^{-1}\eta_n^t(x,s)dx\right)ds
& +
 \int_0^\infty\nu(s)\left(\int_\Omega\partial_s\eta_n^t(x,s)(-\Delta)^{-1}\eta_n^t(x,s)dx\right)ds  \notag \\
& =
\int_0^\infty\nu(s)\left(\int_\Omega(-\Delta)\mu_n(x,t)(-\Delta)^{-1}\eta_n^t(x,s)dx\right)ds,  \notag
\end{align}
which is, after an integration by parts, 
\begin{align}
(\partial_t\eta^t_n,\eta^t_n)_{\mathcal{M}_{-1}}+(\partial_s\eta^t_n,\eta^t_n)_{\mathcal{M}_{-1}}=(\mu_n,\eta^t_n)_{\mathcal{M}_0}.
\label{eq7}
\end{align}
Then combining the results produces the differential identity, which holds for almost all $t\in(0,T),$
\begin{align}
\frac{1}{2}\frac{d}{dt} \left\{ |\|u_n\||^2_{W^{\beta,2}_0} +
2(F(u_n),1) + \|\eta^t\|^2_{\mathcal{M}_{-1}} \right\} +
\alpha\|\partial_tu_n\|^2 -
(T_r\eta^t_n,\eta^t_n)_{\mathcal{M}_{-1}} = 0.  \label{eq8}
\end{align}
For all $t\in(0,T_n),$ set
\begin{equation}
\mathcal{E}_{n}(t) := |\|u_n(t)\||^2_{W^{\beta,2}_0} + 2(F(u_n(t)),1) + \|\eta_{n}^{t}\|^2_{\mathcal{M}_{-1}} + C  \label{energy}
\end{equation}
where, in light of (N3), the constant $C>0$ may be taken sufficiently large (i.e. $C>C_2|\Omega|$) in order to ensure that $\mathcal{E}_{n}(t)$ is nonnegative for all $t\in(0,T_n)$. We have
\begin{align}
\frac{d}{dt} \mathcal{E}_{n} + 2\alpha\|\partial_tu_n\|^2 - 2\int_0^\infty \nu'(s)\|\eta^t_n(s)\|^2_{H^{-1}}ds = 0 \label{eq9}
\end{align}
for almost all $t\in (0,T_{n})$. Hence, integrating the equation above with respect to time in $(0,t)$, we are led to the following integral
equality (which does hold for the approximate solutions)
\begin{align}
\mathcal{E}_{n}(t)+2\int_0^t\left(\alpha\|\partial_tu_n(\tau)\|^2-\int_0^\infty\nu'(s)\|\eta^\tau_n(s)\|^2_{H^{-1}}ds \right) d\tau=
\mathcal{E}_{n}(0).\label{eq25}
\end{align}
Furthermore, from \eqref{energy} and assumption (N3), we find the lower bound
\begin{align}
|\|u_n(t)\||^2_{W^{\beta,2}_0} + 2C_1\|u_n(t)\|^{p/(p-1)}_{L^{p/(p-1)}} + \|\eta_{n}^{t}\|^2_{\mathcal{M}_{-1}} \leq \mathcal{E}_{n}(t).  \label{eq10}
\end{align}
Using the fact that $F(u_0)\in L^1(\Omega)$, we also obtain the upper bound
\begin{align}
\mathcal{E}_{n}(t) \le \mathcal{E}_{n}(0) & \leq |\|u_n(0)\||^2_{W^{\beta,2}_0} + (F(u_n(0)),1) + \|\eta_{n}^{0}\|^2_{\mathcal{M}_{-1}}  \notag \\
& \le Q(\|\phi_n(0)\|_{\mathcal{H}^M_{\beta,0}})+C.  \label{eq10.5}
\end{align}
In particular, the uniform bound derived from \eqref{eq25}-\eqref{eq10.5} implies that the local solution to
Problem $\mathbf{P_{n}}$ can be extended up to time $T$, that is
$T_{n}=T$, for every $n$.
Moreover, from \eqref{eq25}-\eqref{eq10} we deduce the
following bounds for the approximate solution
\begin{align}
\|u_n\|_{L^\infty(0,T;W^{\beta,2}_0)} & \le C  \label{eq26} \\
\|\eta_n\|_{L^\infty(0,T;\mathcal{M}_{-1})} & \le C  \label{eq27} \\
\|F(u_n)\|_{L^\infty(0,T;L^1)} & \le C  \label{eq28} \\
\sqrt{\alpha}\|\partial_tu_n\|_{L^2(\Omega\times(0,T))} & \le C  \label{eq29} \\
\|\eta_n\|_{L^2(0,T;L^2_{-\nu'}(\mathbb{R}_+;H^{-1}))} & \le C  \label{eq30} \\
\|u_n\|_{L^\infty(0,T;L^{p/(p-1)})} &
\le C.  \label{eq30.1}
\end{align}
Obviously, \eqref{Fcons-3.1} and \eqref{eq28} immediately show us
\begin{align}
\|F'(u_n)\|_{L^\infty(0,T;L^1)} & \le C.  \label{eq30.2}
\end{align}
Next, since $\langle A_N^{-1}\partial_tu_n\rangle=0$ (recall \eqref{mass-con}$_2$), we may (and do) take $v=A_N^{-1}\partial_tu_n$ in \eqref{eq21} which leads us to the estimate,
\begin{align}
\|A_N^{-\frac{1}{2}}\partial_{t}u_{n}\|^{2} \leq \int_0^\infty
\nu(s)\|A_N^{-\frac{1}{2}}\eta_{n}^{t}(s)\|
\|A_N^{-\frac{1}{2}}\partial_tu_n(t)\|ds,
\end{align}
that is,
\begin{align}
\|\partial_{t}u_{n}\|_{H^{-1}}^{2}\leq\int_0^\infty
\nu(s)\|\eta_{n}^{t}(s)\|_{H^{-1}} \|\partial_tu_n\|_{H^{-1}}ds.
\end{align}
Using Young's inequality and assumption (K3), we can write
\begin{align}
\|\partial_tu_n\|_{H^{-1}}\leq \|\eta_{n}^{t}\|_{\mathcal{M}_{-1}}.
\label{eq00}
\end{align}
Thus, \eqref{eq27} and \eqref{eq00} yield
\begin{align}
\|\partial_tu_n\|_{L^\infty(0,T;H^{-1})} \le C.  \label{eq31}
\end{align}
Need to bound $F'(u_n)$, then $\mu_n$.
In light of \eqref{eq22}, we apply \eqref{eq30.2}, \eqref{eq31}, and the fact that operator $A_{E,\beta}$ is bounded from $W^{\beta,2}_0(\Omega)$ into $W^{-\beta,2}(\Omega)$ (in particular, $\|A_{E,\beta}u_n\|_{L^2(0,T;W^{-\beta,2}(\Omega))}\le C)$, to obtain the following uniform bounds for $\mu_n$
\begin{align}
|\langle \mu_n \rangle| \le C,  \label{eq31.2}
\end{align}
and
\begin{align}
\|\mu_n\|_{L^2(0,T;W^{-\beta,2}(\Omega))} \le C.  \label{eq32}
\end{align}
This completes Step 2.

\emph{Step 3: Passage to the limit.}
On account of the above uniform inequalities, we can argue that there are functions $u$, $\eta,$ $\mu$, such that, up to subsequences,
\begin{eqnarray}
u_{n}\rightharpoonup u & \quad \text{weakly-* in} \quad & L^{\infty}(0,T;W^{\beta,2}_0(\Omega)),  \label{con1} \\
u_{n}\rightharpoonup u & \quad \text{weakly-* in} \quad & L^{\infty}(0,T;L^{p/(p-1)}(\Omega)),  \label{con1.1} \\
\partial_{t}u_{n}\rightharpoonup \partial_{t}u & \quad \text{weakly-* in} \quad & L^{\infty}(0,T;H^{-1}(\Omega)),  \label{con2} \\
\sqrt{\alpha}\partial_{t}u_{n}\rightharpoonup \sqrt{\alpha}\partial_{t}u & \quad \text{weakly in} \quad & L^{2}(\Omega\times(0,T)),  \label{con5} \\
\eta_{n}\rightharpoonup \eta & \quad \text{weakly-* in} \quad & L^{\infty}(0,T;\mathcal{M}_{-1}),  \label{con3} \\
\eta_{n}\rightharpoonup \eta & \quad \text{weakly in} \quad & L^{2}(0,T;L^{2}_{-\nu'}(\mathbb{R}_{+};H^{-1}(\Omega))),  \label{con4} \\
\partial_t\eta_n\rightharpoonup \partial_t\eta & \quad \text{weakly in} \quad & L^2(0,T;H^{-1}_\nu(\mathbb{R}_+;H^{-1}(\Omega))),  \label{con5.1} \\
\mu_n\rightharpoonup \mu & \quad \text{weakly in} \quad & L^{2}(0,T;W^{-\beta,2}(\Omega)).  \label{con6}
\end{eqnarray}
(Note that \eqref{con5.1} is due to \eqref{eq23} and the definition of the the operator $T_r$.) 
Using the above convergences \eqref{con1} and \eqref{con2}, as well as the fact that the injection $W^{\beta,2}_0(\Omega)\hookrightarrow L^2(\Omega)$ is compact for any $\beta\in(0,1)$, we draw upon the conclusion of the Aubin-Lions Lemma (cf. Lemma \ref{t:Lions} in Appendix \ref{s:append-a}) to deduce the following embedding is compact
\begin{align}
W:=\{ \chi\in L^2(0,T;W^{\beta,2}_0(\Omega)) : \partial_t\chi\in L^2(0,T;H^{-1}(\Omega)) \} \hookrightarrow L^2(\Omega\times(0,T)).  \label{compact-u}
\end{align}
Hence,
\begin{align}
u_{n}\rightarrow u \quad \text{strongly in} \quad L^{2}(\Omega\times(0,T)), \label{st-con-u}
\end{align}
and we deduce that $u_{n}$ converges to $u$, almost everywhere in
$\Omega\times (0,T)$. Using assumption (N1) with \eqref{st-con-u}, we deduce
\begin{align}
F'(u_{n})\rightarrow F'(u) \quad \text{strongly in} \quad L^{2}(0,T;L^1(\Omega)).  \label{con8}
\end{align}
Thus, we now have all the sufficient convergence resuts to pass to the limit in equations \eqref{eq21} and
\eqref{eq22} in order to recover \eqref{p-1} and \eqref{p-2},
respectively.
It remains to recover equation \eqref{eq23} after we pass to the limit. 
An integration by parts on the first term in \eqref{eq23} and then an application of \eqref{con3} yields, for
any $\zeta \in C_{0}^{\infty}((0,T);C_{0}^{\infty}((0,T);H^{1}(\Omega)))$
\begin{align}
\int_0^{T}(\partial_{t}\eta_{n}^{\tau},\zeta)_{\mathcal{M}_{-1}}d\tau=
-\int_0^{T}(\eta_{n}^{\tau},\partial_{t}\zeta)_{\mathcal{M}_{-1}}d\tau\rightarrow
-\int_0^{T}(\eta^{\tau},\partial_{t}\zeta)_{\mathcal{M}_{-1}}d\tau.
\label{con9}
\end{align}
With this we have
\begin{align}
\partial_{t}\eta_{n}^{t}\rightharpoonup \partial_{t}\eta^{t} \quad \text{weakly in} \quad L^2(0,T;H^{-1}_{\nu}(\mathbb{R}_+;H^{-1}(\Omega))) \label{con10}
\end{align}
and that $\eta^{t}\in
L^\infty(0,T;H^{-1}_{\nu}(\mathbb{R}_+;H^{-1}(\Omega)))$. Furthermore,
with the help of \eqref{con4}, we have
\begin{align}
-\int_0^{T}(T_r\eta_{n}^{\tau},\zeta)_{\mathcal{M}_{-1}}d\tau=-\int_0^{T}\nu'(s)(\eta_{n}^{\tau},\zeta)_{H^{-1}}d\tau\rightarrow
-\int_0^{T}\nu'(s)(\eta^{\tau},\zeta)_{H^{-1}}d\tau.  \label{con11}
\end{align}
By using a density argument (cf. \cite{Grasselli&Pata02-2}) and the following distributional equality 
\begin{align}
-\int_0^{T}(\eta_{n}^{\tau},\partial_{t}\zeta)_{\mathcal{M}_{-1}}d\tau-\int_0^{T}\nu'(s)(\eta^{\tau},\zeta)_{H^{-1}(\Omega)}d\tau
=\int_0^{T}(\partial_{t}\eta^{\tau}-T_r\eta^{\tau},\zeta)_{\mathcal{M}_{-1}}d\tau,
\end{align}
we also get \eqref{eq23} on account of \eqref{con3} and \eqref{con6}.
This completes Step 3 of the proof.

{\em Step 4: Energy equality.}
To begin, let $u_0\in D(A_{E,\beta})$, $\eta_0\in \mathcal{M}^{(0)}_{-1}$ and let $\phi=(u,\eta)^{tr}$ be the corresponding weak solution.
Recall from \eqref{st-con-u}, we have, for almost all $t\in(0,T),$
\begin{align}
u_n(t)\rightarrow u(t) \quad \text{strongly in $L^2(\Omega)$ and a.e. in $\Omega.$}  \label{wk-50}
\end{align}
Since $F$ is measurable (see (N1)), Fatou's lemma implies
\begin{equation}
\int_\Omega F(u(t)) dx \le \liminf_{n\rightarrow+\infty}\int_\Omega F(u_n(t)) dx.  \label{wk-51}
\end{equation}
Passing to the limit in \eqref{eq25}, and while keeping in mind \eqref{con1}, \eqref{con3}, \eqref{con5}, \eqref{con5.1}, \eqref{con6} and \eqref{con8}, as well as the weak lower-semicontinuity of the norm, we arrive at the integral inequality which holds for any weak solution
\begin{align}
\mathcal{E}(t) + 2 \int_0^t \left( \alpha\|\partial_tu(\tau)\|^2 d\tau - \int_0^\infty \nu'(s)\|\eta^\tau(s)\|^2_{H^{-1}}ds \right) d\tau \le \mathcal{E}(0).  \notag 
\end{align}

We argue as in the proof of \cite[Corollary 2]{CFG12} to establish the energy equality.
Indeed, take $\xi=\mu$ in \eqref{var-1}. 
By \eqref{p-2}, we need to treat the dual pairing $\langle F'(u),\partial_tu \rangle_{W^{-\beta,2}\times W^{\beta,2}_0}$.
It is here where we employ \eqref{convex} where $F'(u)=G'(u)-c_Fu$ and $G'\in C^1(\mathbb{R})$ is monotone increasing.
Define the functional $\mathcal{G}:L^2(\Omega)\rightarrow\mathbb{R}$ by
\[
\mathcal{G}(\phi):=\left\{ \begin{array}{ll} \displaystyle\int_\Omega G(u)dx & \text{if}\ G(u)\in L^1(\Omega), \\ +\infty & \text{otherwise}. \end{array} \right.
\]
Now by \cite[Proposition 2.8, Chapter II]{Barbu76}, it follows that $\mathcal{G}$ is convex, lower-semicontinuous on $L^2(\Omega)$, and $\chi\in\partial\mathcal{G}(u)$ if and only if $\chi=G'(u)$ almost everywhere in $\Omega$.
Since we have \eqref{wk-reg-2}, we apply \cite[Proposition 4.2]{CKRS07} to find that there holds, for almost all $t\in(0,T),$ 
\begin{align}
\langle \partial_tu,F'(u) \rangle_{W^{-\beta,2}\times W^{\beta,2}_0} & = \langle \partial_tu,G'(u) \rangle_{W^{-\beta,2}\times W^{\beta,2}_0} - c_F\langle \partial_tu,u \rangle_{W^{-\beta,2}\times W^{\beta,2}_0}  \notag \\ 
& = \frac{d}{dt} \left\{ \mathcal{G}(u) - \frac{c_F}{2}\|u\|^2 \right\}  \notag \\ 
& = \frac{d}{dt} \int_\Omega F(u) dx.  \notag 
\end{align}

Similar to Step 2 above, take $v=\mu$, $\xi=\partial_tu$ and $\zeta=\eta^t$ (now without the index $n$) in \eqref{var-1}-\eqref{var-3}, respectively. 
Using the above result on the dual product with $F'(u)$ and \eqref{wk-reg-2}, we are led to the differential identity \eqref{eq9} with $E$, $u$ and $\eta$ in place of $\mathcal{E}_n$, $u_n$ and $\eta_n$, respectively.
Integrating the resulting differential identity on $(0,t)$ produces \eqref{wk-2} as claimed.
This completes Step 4. 

{\em Step 5: $(u,\eta)$ weak solution to Problem {\textbf P}.}
Now let us take $\phi_0=(u_0,\eta_0)^{tr}\in \mathcal{H}^M_{\beta,0}$ where $F(u_0)\in L^1(\Omega).$
Proceeding exactly as in \cite[page 440]{CFG12} the bounds \eqref{eq26}-\eqref{eq30.2} and \eqref{eq31}-\eqref{eq32} hold.
Moreover, with the aid of the Aubin-Lions compact embedding (again see Lemma \ref{t:Lions} in Appendix \ref{s:append-a} below) we deduce the existence of functions $u$, $\eta$ and $\mu$ that satisfy \eqref{wk-reg-1}, \eqref{wk-reg-3}, \eqref{wk-reg-5} and \eqref{wk-reg-6}.
Thus, passing to the limit in the variational formulation for $\phi_k=(u_k,\eta_k)^{tr}$, we find $\phi=(u,\eta)^{tr}$ is a solution corresponding to the initial data $\phi_0=(u_0,\eta_0)^{tr}\in\mathcal{H}_{\beta,0}^M$ for which $F(u_0)\in L^1(\Omega).$
This finishes the proof of the theorem.
\end{proof}

Before we continue, we make some important remarks.

\begin{remark}
The continuity property
\begin{align}
u\in C([0,T];W_0^{\beta-\iota,2}(\Omega)), \notag
\end{align}
for any $\iota>0$, sufficiently small follows from the conditions in Definition \ref{d:ws} after an application of the Aubin-Lions Lemma (cf. Lemma \ref{t:Lions} in Appendix \ref{s:append-a}).
In addition, the property
\begin{align}
\eta\in C([0,T];\mathcal{M}_{-1}^{(0)}) \notag
\end{align}
follows from the density argument in \cite{Grasselli&Pata02-2}. 
Thus, we deduce the continuity properties
\begin{align}
\phi=(u,\eta) \in C([0,T];\mathcal{H}^{M}_{\beta,0}). \notag 
\end{align}
\end{remark}

\begin{remark}  \label{r:gradient}
From \eqref{wk-2} we see that if there is a $t^*>0$ in which
\[
\mathcal{E}(t^*)=\mathcal{E}(0),
\]
then, for all $t\in(0,t^*),$
\begin{equation}  \label{no-grad}
\int_0^t \left( \alpha\|\partial_tu(\tau)\|^2 + \|\eta^\tau\|^2_{L^2_{-\nu'}(\mathbb{R}_+;H^{-1})} \right) d\tau = 0.
\end{equation}
We deduce $\partial_tu(t)=0$ for all $t\in(0,t^*)$.
Additionally, since $u(t)=u_0$ for all $t\in(0,t^*),$ equation \eqref{p-2} shows
\[
\mu(t)=A_{E,\beta}u_0+F'(u_0) \quad \forall t\in(0,t^*),
\]
i.e., $\mu(t)=\mu^*$ is also stationary.
Thus, by the definition of $\eta^t$ given in \eqref{eta-1}, we find here that, for each $t\in(0,t^*)$
\[
\eta^t(s)=s A_N\mu^* \quad \forall s\ge0.
\]
Therefore $\phi=(u,\eta)^{tr}$ is a fixed point of the trajectory $\phi(t)=\mathcal{S}(t)\phi_0$, where $\mathcal{S}$ is the solution operator defined below in Corollary \ref{t:Lip-semif}.
\end{remark}

The following result (cf. \cite[Theorem 3.4]{CGG11}) concerns the existence of strong/regular solutions which will be utilized in the proof of the continuous dependence estimate.
Note that we will now employ the added assumption on the nonlinear term.

\begin{theorem} \label{t:st-solns}
Let $T>0$ and $\phi_0=(u_0,\eta_0)^{tr}\in \mathcal{H}^M_{\beta+1,\beta+1}:=W^{\beta+1,2}_0(\Omega)\times L^2_\nu(\mathbb{R}_+;W^{\beta,2}_0(\Omega))$ be such that $F(u_0)\in L^1(\Omega)$ and $\eta_0\in D(T_r)$.
Assume $\alpha>0$ and that (K1)-(K4) and (N1)-(N3) hold.
Additionally, assume that (N4) holds.
Problem {\textbf P} admits at least one weak solution $\phi=(u,\eta)$ on $(0,T)$ according to Definition \eqref{d:ws} with the additional regularity, for any $T>0$,
\begin{align}
\phi=(u,\eta) & \in L^\infty(0,T;\mathcal{H}^M_{\beta+1,\beta+1})\cap W^{1,\infty}(0,T;\mathcal{H}_{\beta,0}^M),  \label{st-reg-1} \\
\sqrt{\alpha}\partial_tu & \in L^2(0,T;H^1(\Omega)) \\
\partial_{tt}u & \in L^\infty(0,T;H^{-1}(\Omega)),  \label{st-reg-2} \\
\sqrt{\alpha}\partial_{tt}u & \in L^2(\Omega\times(0,T)), \label{st-reg-3} \\
\mu & \in L^\infty(0,T;H^1(\Omega)), \label{st-reg-3.5} \\
\eta & \in L^\infty(0,T;D(T_r)).  \label{st-reg-4}
\end{align}
\end{theorem}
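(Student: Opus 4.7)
The plan is to run the same Faedo--Galerkin scheme as in the proof of Theorem \ref{t:wk-sols}, now with the more regular initial data $\phi_0\in\mathcal{H}^M_{\beta+1,\beta+1}$ and $\eta_0\in D(T_r)$, and to supplement the first-order estimates \eqref{eq26}--\eqref{eq32} (which remain valid as stated) with second-order estimates obtained by differentiating the approximate equations \eqref{eq21}--\eqref{eq23} in time. After formally applying $\partial_t$, I would test the resulting equations respectively with $\partial_t\mu_n$, $\partial_{tt}u_n$, and $\partial_t\eta^t_n$, and sum as in Step 2 of the previous proof; using the identity $\partial_t\mu_n=\alpha\partial_{tt}u_n+A_{E,\beta}\partial_tu_n+F''(u_n)\partial_tu_n$ together with the dissipativity $(T_r\partial_t\eta^t_n,\partial_t\eta^t_n)_{\mathcal{M}_{-1}}\le 0$, this produces the second-order differential identity
\[
\frac{1}{2}\frac{d}{dt}\Bigl(|\|\partial_tu_n\||^2_{W^{\beta,2}_0}+\|\partial_t\eta^t_n\|^2_{\mathcal{M}_{-1}}\Bigr)+\alpha\|\partial_{tt}u_n\|^2-(T_r\partial_t\eta^t_n,\partial_t\eta^t_n)_{\mathcal{M}_{-1}}=-(F''(u_n)\partial_tu_n,\partial_{tt}u_n).
\]

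The right-hand side of this identity is the principal obstacle, and this is precisely where assumption (N4) becomes indispensable. Using (N4), the embedding $W^{\beta+1,2}_0(\Omega)\hookrightarrow L^\infty(\Omega)$ valid for $\beta\in(N/4,1)$ (cf.~\eqref{beta-range}), H\"older's inequality, and Young's inequality,
\[
\bigl|(F''(u_n)\partial_tu_n,\partial_{tt}u_n)\bigr|\le C_3\bigl(1+\|u_n\|_{L^\infty}^{\rho-2}\bigr)\|\partial_tu_n\|\,\|\partial_{tt}u_n\|\le\tfrac{\alpha}{2}\|\partial_{tt}u_n\|^2+Q\bigl(\|u_n\|_{W^{\beta+1,2}_0}\bigr)\|\partial_tu_n\|^2,
\]
which absorbs the $\partial_{tt}u_n$ term and leaves a form suitable for Gronwall. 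The requisite initial bounds come directly from the approximating equations evaluated at $t=0$: $\partial_tu_n(0)=-\int_0^\infty\nu(s)\eta_{0n}(s)\,ds$ is uniformly controlled in $W^{\beta,2}_0$ thanks to the hypothesis $\eta_0\in L^2_\nu(\mathbb{R}_+;W^{\beta,2}_0)$, while $\partial_t\eta_n^0=T_r\eta_{0n}-\Delta\mu_n(0)$ is uniformly controlled in $\mathcal{M}_{-1}$ because $\eta_0\in D(T_r)$ and $\mu_n(0)=\alpha\partial_tu_n(0)+A_{E,\beta}u_{0n}+F'(u_{0n})$ lies in $H^1(\Omega)$ by $u_0\in W^{\beta+1,2}_0$ and the control of $F'(u_{0n})$ in $H^1$ afforded by (N2)/(N4).

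To close the scheme I would then bootstrap: solving \eqref{eq22} for $A_{E,\beta}u_n$, the new bound on $\partial_tu_n$ in $L^\infty(W^{\beta,2}_0)$ combined with $F'(u_n)\in L^\infty(H^1)$ (via (N4) and the chain rule) upgrades $\mu_n$ to $L^\infty(0,T;H^1(\Omega))$, which in turn yields $u_n\in L^\infty(0,T;W^{\beta+1,2}_0)$; equation \eqref{eq23} then forces $\partial_s\eta^t_n=-\partial_t\eta^t_n-\Delta\mu_n\in L^\infty(0,T;\mathcal{M}_{-1})$, so $\eta_n\in L^\infty(0,T;D(T_r))$; testing the time-differentiated \eqref{eq21} with $A_N^{-1}\partial_{tt}u_n$ produces $\partial_{tt}u_n\in L^\infty(0,T;H^{-1}(\Omega))$; and testing the time-differentiated \eqref{eq22} with $\partial_tu_n$ delivers $\sqrt{\alpha}\partial_tu_n\in L^2(0,T;H^1(\Omega))$. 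With all these uniform bounds in hand, passage to the limit proceeds exactly as in Step 3 of Theorem \ref{t:wk-sols}, using weak-$*$ compactness together with the Aubin--Lions lemma to identify the nonlinear terms.
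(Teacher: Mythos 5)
Your core second-order energy estimate is essentially the paper's argument: differentiate the Galerkin system in time, test with $\partial_t\mu_n$, $\partial_{tt}u_n$ and $\partial_t\eta^t_n$, control the initial values of the time derivatives through the equations at $t=0$ using $u_0\in W^{\beta+1,2}_0(\Omega)$, $\eta_0\in L^2_\nu(\mathbb{R}_+;W^{\beta,2}_0(\Omega))$ and $\eta_0\in D(T_r)$, and close with Gr\"onwall. Two cautions there: first, your bound $|(F''(u_n)\partial_tu_n,\partial_{tt}u_n)|\le \frac{\alpha}{2}\|\partial_{tt}u_n\|^2+Q(\|u_n\|_{W^{\beta+1,2}_0})\|\partial_tu_n\|^2$ invokes the $L^\infty(0,T;W^{\beta+1,2}_0(\Omega))$ bound on $u_n$, which you only derive \emph{after} this Gr\"onwall step, so as written the constant in the Gr\"onwall lemma is not yet controlled; the paper avoids this circularity by estimating the $F''$ term using only the first-order bound \eqref{eq30.1} coming from the basic energy identity.

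The genuine gap is in your bootstrap. You claim $\mu_n\in L^\infty(0,T;H^1(\Omega))$ follows from \eqref{eq22} because $\partial_tu_n\in L^\infty(0,T;W^{\beta,2}_0(\Omega))$ and $F'(u_n)\in L^\infty(0,T;H^1(\Omega))$; but $W^{\beta,2}_0(\Omega)$ with $\beta<1$ does not embed into $H^1(\Omega)$, and placing $A_{E,\beta}u_n$ in $H^1(\Omega)$ would require $u_n\in W^{1+2\beta,2}$, far beyond the $W^{\beta+1,2}_0$ regularity being proved (and the $H^1$ bound on $F'(u_n)$ itself presupposes the $W^{\beta+1,2}_0$ bound on $u_n$ that you obtain only downstream). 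The paper derives \eqref{st-reg-3.5} from an entirely different equation: it tests the memory equation \eqref{eq23} with $\zeta=A_N\bar\mu$, so that $\|\nabla\mu\|^2$ is controlled by $\|\partial_t\eta^t\|^2_{\mathcal{M}_{-1}}$ and $-\int_0^\infty\nu'(s)\|\eta^t(s)\|^2_{H^{-1}}ds$ (already bounded by the second-order estimate and \eqref{eq30}), and combines this with the bound \eqref{eq31.2} on $\langle\mu\rangle$. Only with $\mu\in L^\infty(0,T;H^1(\Omega))$ in hand does one test \eqref{var-2} with $\xi=A_N\partial_tu$ to obtain simultaneously $u\in L^\infty(0,T;W^{\beta+1,2}_0(\Omega))$ and $\sqrt{\alpha}\partial_tu\in L^2(0,T;H^1(\Omega))$; your alternative of testing the time-differentiated \eqref{eq22} with $\partial_tu_n$ yields only $\partial_tu_n\in L^2(0,T;W^{\beta,2}_0(\Omega))$, which is weaker than the claimed $H^1$ regularity. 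Your remaining steps (the $H^{-1}$ bound on $\partial_{tt}u_n$ from the differentiated \eqref{eq21}, and $\eta\in L^\infty(0,T;D(T_r))$ from \eqref{eq23}) are fine.
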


\begin{proof}
The proof relies on the Galerkin approximation scheme developed in the proof of Theorem \ref{t:wk-sols}.
We will seek $\phi_n=(u_n,\eta_n)$ of the form \eqref{appx-soln} satisfying Problem $\mathbf{P}_{n}$:
\begin{align}
(\partial_{tt}u_n,v) + \int_0^\infty \nu(s)(\partial_t\eta^t_n(s),v)ds & = 0  \label{app1} \\
a_{E,\beta}(\partial_tu_n,\xi) + (F''(u_n)\partial_tu,\xi) + \alpha(\partial_{tt}u_n,\xi) & = (\partial_t\mu_n,\xi)  \label{app2} \\
(\partial_{tt}\eta^t_n,\zeta)_{\mathcal{M}_{-1}} -
(T_r\partial_t\eta^t_n,\zeta)_{\mathcal{M}_{-1}} & =
(\partial_t\mu_n,\zeta)_{\mathcal{M}_0} \label{app3}
\end{align}
for every $t\in(0,T),$ $v\in V^n$, $\xi\in W^n$ and $\zeta\in\mathcal{M}^n,$ and which satisfy the initial conditions
\begin{align}
u_{n}(0)=\tilde u_{0n} \quad \text{and} \quad \eta_{n}^{0}=\tilde\eta_{0n}, \label{app4}
\end{align}
where we have set 
\begin{equation}
\tilde u_{0n}:= -\int_0^\infty\nu(s)\eta_{0n}(s)ds, \label{app5}
\end{equation}
and
\begin{equation}
\tilde\eta_{0n}:=T_r\eta_{0n}+A_N\mu_{0n}, \label{app6}
\end{equation}
and also
\begin{equation}
\mu_{0n}=-\alpha\int_0^\infty\nu(s)\eta_{0n}(s)ds+A_{E,\beta}u_{0n}+F'(u_{0n}). \label{app6.5}
\end{equation}
It is important to note that when $\phi_0=(u_0,\eta_0)$ satisfies the assumptions of Theorem \ref{t:st-solns}, then it is guaranteed that $(\tilde u_0,\tilde\eta_0)\in\mathcal{H}^M_{1,0}$.
Indeed, relying on the fact that $\|(u_{0n},\eta_{0n})\|_{\mathcal{H}^M_{\beta,0}}\le\|(u_{0},\eta_{0})\|_{\mathcal{H}^M_{\beta,0}}$, we easily obtain the estimate $\|(\partial_tu_n(0),\partial_t\eta^0_n)\|_{\mathcal{H}^M_{\beta,0}}\le Q(\|(u_0,\eta_0)\|_{\mathcal{H}^M_{\beta+1,\beta+1}})$.
Now, for any fixed $n\in\mathbb{N}$, we find a unique local maximal solution $\phi_n=(u_n,\eta_n)\in C^2([0,T_n];\mathcal{H}^M_{\beta+1,2}).$
Next we integrate \eqref{app1} and \eqref{app2} with respect to time on $(0,t)$ and argue as in the proof of Theorem \ref{t:wk-sols} to find the uniform bounds \eqref{eq26}-\eqref{eq30.2}, \eqref{eq31} and \eqref{eq32}. 
In order to obtain the required higher-order estimates, let us begin by labeling 
\[
\tilde u(t)=\partial_tu(t), \quad \tilde\eta^t=\partial_t\eta^t, \quad \tilde\mu(t)=\partial_t\mu(t),
\]
where we are also dropping the index $n$ for the sake of simplicity. 
Then $(\tilde u,\tilde\eta)$ solves the system 
\begin{align}
\langle \partial_t\tilde u,v\rangle_{H^{-1}\times H^1} + \int_0^\infty \nu(s)\langle\tilde\eta^t(s),v\rangle_{H^{-1}\times H^1} ds & = 0,  \label{app7} \\
a_{E,\beta}(\tilde u,\xi) + (F''(u)\tilde u,\xi) + \alpha(\partial_t\tilde u,\xi) & = \langle\mu,\xi\rangle_{W^{-\beta,2}\times W^{\beta,2}_0},  \label{app8} \\
(\partial_t\tilde\eta^t,\zeta)_{\mathcal{M}_{-1}} - (T_r\tilde\eta^t,\zeta)_{\mathcal{M}_{-1}} & = (\tilde\mu,\zeta)_{\mathcal{M}_0},  \label{app9}
\end{align}
for all $v\in H^1(\Omega)$, $\xi\in W^{\beta,2}_0(\Omega)$ and $\zeta\in\mathcal{M}_1$, with the initial conditions
\begin{align}
\tilde u(0)=\tilde u_0 \quad \text{and} \quad \tilde\eta^0=\tilde\eta_0. \notag
\end{align}

Let us now take $v=\tilde\mu$, $\xi=\partial_t\tilde u$ and $\zeta=\tilde\eta^t$ in \eqref{app7}-\eqref{app9}, respectively. 
Summing the resulting identities together, we obtain, for all $t\in(0,T)$,
\begin{align}
\frac{1}{2}\frac{d}{dt}\left\{ \|\tilde u\|^2_{W^{\beta,2}_0} + \|\tilde\eta^t\|^2_{\mathcal{M}_{-1}} \right\} - \int_0^\infty\nu'(s)\|\tilde\eta^t(s)\|^2_{H^{-1}}ds+\alpha\|\partial_t\tilde u\|^2=-(F''(u)\tilde u,\partial_t\tilde u).  \notag
\end{align}
Here we apply (K5) as well as (N4) with \eqref{eq30.1} and the embedding $W^{\beta,2}_0(\Omega)\hookrightarrow L^2(\Omega)$ to find
\begin{align}
\frac{1}{2}\frac{d}{dt}\left\{ \|\tilde u\|^2_{W^{\beta,2}_0} + \|\tilde\eta^t\|^2_{\mathcal{M}_{-1}} \right\} + \lambda\|\tilde\eta^t\|^2_{\mathcal{M}_{-1}}+\alpha\|\partial_t\tilde u\|^2 & \le C_\alpha\|\tilde u\|^2+\frac{\alpha}{2}\|\partial_t\tilde u\| \notag \\ 
& \le C_\alpha\|\tilde u\|^2_{W^{\beta,2}_0}+\frac{\alpha}{2}\|\partial_t\tilde u\|, \label{app10}
\end{align}
where $C_\alpha\sim\alpha^{-1}$ is a positive constant.
Integrating \eqref{app10} over $(0,t)$ produces
\begin{align}
& \|\tilde u(t)\|^2_{W^{\beta,2}_0} + \|\tilde\eta^t\|^2_{\mathcal{M}_{-1}} + \int_0^t \left( 2\lambda\|\tilde\eta^\tau\|^2_{\mathcal{M}_{-1}} + \alpha\|\partial_t\tilde u(\tau)\|^2 \right) d\tau \notag \\
& \le \|\tilde u(0)\|^2_{W^{\beta,2}_0} + \|\tilde\eta^0\|^2_{\mathcal{M}_{-1}} + C_\alpha\int_0^t \|\tilde u(\tau)\|^2_{W^{\beta,2}_0} d\tau, \label{app11}
\end{align}
and an application of Gr\"onwall's (integral) inequality shows, for all $t\ge0$,
\begin{align}
\|(\tilde u(t),\tilde\eta^t)\|_{\mathcal{H}^M_{\beta,0}} \le Q(\|(\tilde u_0,\tilde\eta_0)\|_{\mathcal{H}^M_{\beta,0}}) \label{app12}
\end{align}
and
\begin{align}
\sqrt{\alpha}\|\partial_t\tilde u(t)\|_{L^2(\Omega\times(0,T))} \le Q(\|(\tilde u_0,\tilde\eta_0)\|_{\mathcal{H}^M_{\beta,0}}). \label{app13}
\end{align}
Through \eqref{app5}-\eqref{app6.5} we find $\|(\tilde u_0,\tilde \eta_0)\|_{\mathcal{H}^M_{\beta,0}}$ depends on 
\[
\int_0^\infty\nu(s)\|\eta_0(s)\|^2_{W^{\beta,2}_0}ds, \quad \|A_N\mu_0\|_{\mathcal{M}_{-1}} \quad \text{and} \quad \|T_r\eta_0\|_{\mathcal{M}_{-1}},
\]
hence the assumption on the initial data is justified.

Furthermore, we now consider \eqref{eq23} and take $\zeta=A_N\bar\mu(t)$ where $\bar\mu=\mu-\langle\mu\rangle,$ so that, with \eqref{eq27}, \eqref{eq30} and \eqref{app12}, we obtain, for all $t\ge0$ and for every $\ep>0,$ 
\begin{align}
\|\nabla\mu\|^2 & = (\partial_t\eta^t,\mu)_{\mathcal{M}_{0}} - (T_r\eta^t,\mu)_{\mathcal{M}_{0}} \\
& = \int_0^\infty\nu(s)(\partial_t\eta^t(s),\mu(t))ds - \int_0^\infty\nu'(s)(\eta^t(s),\mu(t))ds \\
& \le C_\ep\left( \|\partial_t\eta^t\|^2_{\mathcal{M}_{-1}} - \int_0^\infty\nu'(s)\|\eta^t(s)\|^2_{H^{-1}}ds \right) + \ep\|\nabla\mu\|^2 \\ 
& \le C_\ep\left( 1-\int_0^\infty\nu'(s)\|\eta^t(s)\|^2_{H^{-1}}ds \right) + \ep\|\nabla\mu\|^2 \\ 
& \le C_\ep+\ep\|\nabla\mu\|^2 \label{app14}
\end{align}
where $C_\ep\sim\ep^{-1}.$
Together \eqref{eq31.2} and \eqref{app14} show us, for all $t\ge0$,
\begin{align}
\|\mu(t)\|_{H^1}\le C. \label{app15}
\end{align}

At this point we can reason as is in the proof of Theorem \ref{t:wk-sols} to find that there is a solution $\phi=(u,\eta)\in W^{1,\infty}(0,T;\mathcal{H}^M_{\beta,0})$ to Problem {\bf P} satisfying \eqref{st-reg-2} and \eqref{st-reg-3}.
Additionally, thanks to \eqref{app15}, the condition \eqref{st-reg-3.5} holds.
It remains to show that 
\[
\phi=(u,\eta)\in L^\infty\left(0,T;\left[W^{\beta+1,2}_0(\Omega)\times L^2_\nu(\mathbb{R}_+;W^{\beta,2}_0(\Omega))\right]\right).
\]
First, in light of \eqref{app12} we multiply \eqref{p-1} by $A_{E,\beta}\eta^t$ in $L^2(\Omega)$ which yields
\begin{align}
\|\eta^t\|^2_{L^2_\nu(\mathbb{R}_+;W^{\beta,2}_0(\Omega))} = -\int_0^\infty \nu(s)(A_{E,\beta}^\frac{1}{2}\partial_tu(t),A_{E,\beta}^\frac{1}{2}\eta^t(s))ds. \notag
\end{align}
Hence, $\eta\in L^\infty(0,T;L^2_\nu(\mathbb{R}_+;W^{\beta,2}_0(\Omega)))$.
Next we consider the identity \eqref{var-2} whereby we may now rely on the regularity properties of $\partial_tu$ and $\mu$.
We take $\xi=A_{N}\partial_tu$ to produce
\begin{align}
\frac{1}{2}\frac{d}{dt}|\|u\||^2_{W^{\beta+1,2}_0}+\langle F''(u)\nabla u,\nabla u\rangle+\alpha\|\partial_tu\|^2_{H^1}=\langle \nabla\mu,\nabla u \rangle. \notag
\end{align}
After applying (N1) and integrating the resulting differential inequality with respect to $t$ over $(0,t)$, there holds, for all $t\ge0,$
\begin{align}
|\|u(t)\||^2_{W^{\beta+1,2}_0} + 2\int_0^\infty\alpha\|\partial_tu(\tau)\|^2_{H^1}d\tau \le |\|u(0)\||^2_{W^{\beta+1,2}_0} + Q(\|(u_0,\eta_0)\|_{\mathcal{H}^M_{\beta,0}}). \notag
\end{align}
We now deduce
\[
u\in L^\infty(0,T;W^{\beta+1,2}_0(\Omega)) \quad \text{and} \quad \sqrt{\alpha}\partial_tu\in L^2(0,T;H^1(\Omega)).
\]
This completes the proof.
\end{proof}

The following proposition provides continuous dependence and uniqueness for the solutions constructed above. 

\begin{proposition}  \label{t:cont-dep}
Let the assumptions of Theorem \ref{t:wk-sols} hold.
Additionally, assume (N4) holds. 
Let $T>0$ and let $\phi_i=(u_{i},\eta_{i})^{tr}$, $i=1,2$, be two solutions to Problem \textbf{P} on $(0,T)$ corresponding to the initial data $\phi_{0i}=(u_{0i},\eta_{0i})^{tr}\in\mathcal{H}^M_{\beta,0}=W^{\beta,2}_0(\Omega)\times\mathcal{M}^{(0)}_{-1}$, such that $F(u_{0i})\in L^1(\Omega)$, $i=1,2$. 
Then, for each $\alpha>0$ there is a positive constant $C_\alpha\sim\alpha^{-1}$ such that the following estimate holds, for any $t\in(0,T)$,
\begin{align}
\|\phi_1(t)-\phi_2(t)\|^2_{\mathcal{H}^M_{\beta,0}} & + \int_0^t \left( \alpha\|\partial_tu_1(\tau)-\partial_tu_2(\tau)\|^2 + \|\eta_1^\tau-\eta_2^\tau\|^2_{L^2_{-\nu'}(\mathbb{R}_+;H^{-1})} \right) d\tau  \notag \\ 
& \leq e^{C_\alpha t}\|\phi_{01}-\phi_{02}\|^2_{\mathcal{H}^M_{\beta,0}}.  \label{cp0}
\end{align}
\end{proposition}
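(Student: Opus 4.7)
The plan is to subtract the variational identities \eqref{var-1}--\eqref{var-3} for the two solutions, produce a differential inequality for the energy of the difference, and close by Gr\"onwall's lemma. Write $\bar u := u_1-u_2$, $\bar\eta := \eta_1-\eta_2$, and $\bar\mu := \mu_1-\mu_2$; the subtraction yields the linearized system
\begin{align*}
\partial_t\bar u + \int_0^\infty \nu(s)\bar\eta^t(s)\,ds & = 0, \\
\bar\mu & = \alpha\partial_t\bar u + A_{E,\beta}\bar u + F'(u_1)-F'(u_2), \\
\partial_t\bar\eta^t - T_r\bar\eta^t & = -A_N\bar\mu,
\end{align*}
in the distributional sense. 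I would mimic Step 2 of the proof of Theorem \ref{t:wk-sols} at the linearized level: test the first equation with $v=\bar\mu$, substitute the second equation inside the resulting pairing $\langle\partial_t\bar u,\bar\mu\rangle$, and test the third equation with $\zeta=\bar\eta^t$ in $\mathcal{M}_{-1}$. The cross pairings $(\bar\eta^t,\bar\mu)_{\mathcal{M}_0}$ cancel between the first and third identities, and by (K2) the memory contribution stays nonnegative, yielding
\begin{align*}
& \frac{1}{2}\frac{d}{dt}\Bigl\{|\|\bar u\||^2_{W^{\beta,2}_0} + \|\bar\eta^t\|^2_{\mathcal{M}_{-1}}\Bigr\} + \alpha\|\partial_t\bar u\|^2 - \frac{1}{2}\int_0^\infty \nu'(s)\|\bar\eta^t(s)\|^2_{H^{-1}}\,ds \\
& \qquad = -(F'(u_1)-F'(u_2),\partial_t\bar u).
\end{align*}

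Next, I would control the right-hand side using (N1) and (N4). The mean value theorem gives $|F'(u_1)-F'(u_2)|\le C\bigl(1+|u_1|^{\rho-2}+|u_2|^{\rho-2}\bigr)|\bar u|$, so Cauchy--Schwarz together with the a priori bound $u_i\in L^\infty(0,T;W^{\beta,2}_0(\Omega))$ and the Sobolev embedding into $L^s(\Omega)$ (using in particular $D(A_{E,\beta})\hookrightarrow L^\infty(\Omega)$ for $\beta\in(N/4,1)$ to accommodate arbitrary polynomial growth) produces, via Young's inequality,
\[
\bigl|(F'(u_1)-F'(u_2),\partial_t\bar u)\bigr| \le \tfrac{\alpha}{2}\|\partial_t\bar u\|^2 + C_\alpha|\|\bar u\||^2_{W^{\beta,2}_0},
\]
with $C_\alpha\sim\alpha^{-1}$. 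Absorbing the $\partial_t\bar u$ contribution into the left-hand side, integrating over $(0,t)$, and noting that $|\|\bar u\||^2_{W^{\beta,2}_0}+\|\bar\eta^t\|^2_{\mathcal{M}_{-1}}$ is equivalent to $\|\phi_1(t)-\phi_2(t)\|^2_{\mathcal{H}^M_{\beta,0}}$ (the uniform mass bound $|\langle u_i\rangle|\le M$ keeping the average components harmless), an application of Gr\"onwall's lemma delivers \eqref{cp0}.

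The main obstacle is the admissibility of the proposed test functions. For weak solutions in the sense of Definition \ref{d:ws}, $\bar\mu\in L^2(0,T;W^{-\beta,2})$ is not $H^1$-valued and $\partial_t\bar u\in L^2(\Omega\times(0,T))$ is not $W^{\beta,2}_0$-valued, so none of the pairings $\langle\partial_t\bar u,\bar\mu\rangle$, $(F'(u_1)-F'(u_2),\partial_t\bar u)$, $(\bar\mu,\bar\eta^t)_{\mathcal{M}_0}$ is classical at this regularity. I would circumvent this by running the entire computation at the Galerkin level used in the proof of Theorem \ref{t:wk-sols}: the approximations $\phi_{1,n}, \phi_{2,n}$ are smooth, every pairing is elementary, and the corresponding bound
\[
\|\phi_{1,n}(t)-\phi_{2,n}(t)\|^2_{\mathcal{H}^M_{\beta,0}} \le e^{C_\alpha t}\|\phi_{01,n}-\phi_{02,n}\|^2_{\mathcal{H}^M_{\beta,0}}
\]
passes to the limit by means of \eqref{con1}--\eqref{st-con-u}, weak lower semicontinuity preserving the dissipative terms on the left, and strong $L^2$-convergence of $u_n$ together with (N1), (N4) handling the nonlinear term on the right. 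Equivalently, one could first prove the estimate for the strong solutions of Theorem \ref{t:st-solns}, where all tests are automatically admissible and $u_i\in L^\infty(\Omega\times(0,T))$ via $D(A_{E,\beta})\hookrightarrow L^\infty$, and extend to weak initial data by density of $\mathcal{H}^M_{\beta+1,\beta+1}$ in $\mathcal{H}^M_{\beta,0}$, the estimate itself providing the Cauchy property needed to pass to the limit of strong approximants.
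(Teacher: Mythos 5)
Your proposal is correct and follows essentially the same route as the paper: test the difference system with $\bar\mu$, $\partial_t\bar u$, and $\bar\eta^t$, estimate $(F'(u_1)-F'(u_2),\partial_t\bar u)$ via (N4) and the $L^\infty$ embedding, close with Gr\"onwall, and justify the formal computation by first working with the regular solutions of Theorem \ref{t:st-solns} and passing to weak data by density. The only differences are cosmetic (a factor of $1/2$ on the memory dissipation term, and the additional mention of a purely Galerkin-level alternative), neither of which affects the argument.
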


\begin{proof} 
To begin, we assume $(u_{0i},\eta_{0i})$, $i=1,2$, satisfy the assumptions of Theorem \ref{t:st-solns} (recall, above we are assuming (N4) holds), and we will work with the more regular solutions to obtain \eqref{cp0}.
For all $t\in[0,T]$, we then set
\begin{align*}
\phi(t):=\phi_1(t)-\phi_2(t), \quad u(t):=u_{1}(t)-u_{2}(t), \quad \eta^t:=\eta^t_{1}-\eta^t_{2} \quad \text{and} \quad \mu:=\mu_1-\mu_2
\end{align*}
where $\phi_i(t)=(u_{i}(t),\eta^t_{i})$ is a solution corresponding to
$(u_{0i},\eta_{0i})$, $i=1,2.$ 
Then, formally, $\phi=(u,\eta)$ solves the equations for all $v\in H^1(\Omega),$ $\xi\in W^{\beta,2}_0(\Omega)\cap L^p(\Omega)$ and $\zeta\in\mathcal{M}_1$:
\begin{align}
\langle \partial_tu,v\rangle_{H^{-1}\times H^1} + \int_0^\infty \nu(s)\langle\eta^t(s),v\rangle_{H^{-1}\times H^1} ds & = 0,  \label{cp1} \\ 
a_{E,\beta}(u,\xi) + \langle F'(u_1)-F'(u_2),\xi\rangle_{W^{-\beta,2}\times W^{\beta,2}_0} + \alpha \langle\partial_tu,\xi\rangle_{W^{-\beta,2}\times W^{\beta,2}_0} & = \langle\mu,\xi\rangle_{W^{-\beta,2}\times W^{\beta,2}_0},  \label{cp2} \\
(\partial_t\eta^t,\zeta)_{\mathcal{M}_{-1}} - (T_r\eta^t,\zeta)_{\mathcal{M}_{-1}} & = (\mu,\zeta)_{\mathcal{M}_0}  \label{cp3}
\end{align}
with the initial data
\begin{align*}
u(0)=u_{01}-u_{02}, \quad \eta^0=\eta_{01}-\eta_{02}.
\end{align*}
In \eqref{cp1} we choose $v=\mu$ and in \eqref{cp2} we choose $\xi=\partial_tu$.
Owing to Theorem \ref{t:st-solns}, for each $t\in[0,T]$, these elements are in $H^1(\Omega)$ and $W^{\beta,2}_0(\Omega)$, respectively, then sum the results to obtain
\begin{align}
(A_{E,\beta}u,\partial_tu) + (F'(u_1)-F'(u_2),\partial_tu) + \alpha\|\partial_tu\|^2 + \int_0^\infty \nu(s)(\mu,\eta^t(s))ds = 0.  \label{cp3.5}
\end{align}
Further, multiply \eqref{cp3} by $A^{-1}_N\eta^t$ in $\mathcal{M}_0$, then adding the obtained relation to \eqref{cp3.5}, we have 
\begin{align}
\frac{1}{2}\frac{d}{dt}\{ |\|u\||^2_{W^{\beta,2}_0} + \|\eta^t\|^2_{\mathcal{M}_{-1}} \} + \alpha\|\partial_tu\|^2 - \int_0^\infty \nu'(s)\|\eta^t(s)\|^2_{H^{-1}}ds + (F'(u_1)-F'(u_2),\partial_tu) = 0.  \label{cp4}
\end{align}
Using H\"{o}lder's inequality, (N4), Young's inequality and the embedding $L^\infty(\Omega)\hookrightarrow W^{\beta,2}_0(\Omega)$, we estimate the remaining product as 
\begin{align}
|(F'(u_1)-F'(u_2),\partial_tu)| & \le \|F'(u_1)-F'(u_2)\|\|\partial_t u\| \notag \\
& \le C\|(1+|u_1|^{\rho-2}+|u_2|^{\rho-2})u\|\|\partial_tu\|  \notag \\ 
& \le C(1+\|u_1\|^{\rho-2}_{L^{2(\rho-2)}}+\|u_2\|^{\rho-2}_{L^{2(\rho-2)}})\|u\|_{L^\infty}\|\partial_tu\|  \notag \\ 
& \le Q_\alpha(\|(u_{0i},\eta_{0i})\|_{\mathcal{H}^M_{\beta,0}})|\|u\||^2_{W^{\beta,2}_0}+\frac{\alpha}{2}\|\partial_tu\|^2,  \label{cp5}
\end{align}
where the positive monotone increasing function $Q_\alpha(\cdot)\sim\alpha^{-1}$ (we remind the reader $\|(u_{0i},\eta_{0i})\|_{\mathcal{H}^M_{\beta+1,\beta+1}} \le Q\|(u_{0i},\eta_{0i})\|_{\mathcal{H}^M_{\beta,0}}$, for $i=1,2$ and the bounds on $u_1$ and $u_2$ follow from \eqref{wk-1} and \eqref{wk-2}).
With \eqref{cp4} and \eqref{cp5}, we obtain the following differential inequality which holds for almost all $t\in[0,T]$
\begin{align}
\frac{d}{dt}\{ |\|u\||^2_{W^{\beta,2}_0} + \|\eta^t\|^2_{\mathcal{M}_{-1}} \} + \alpha\|\partial_tu\|^2 + \|\eta^t\|^2_{L^2_{-\nu'}(\mathbb{R}_+;H^{-1})} & \le Q_\alpha(\|(u_{0i},\eta_{0i})\|_{\mathcal{H}^M_{\beta,0}})|\|u\||^2_{W^{\beta,2}_0}  \notag \\ 
& \le Q_\alpha(\|(u_{0i},\eta_{0i})\|_{\mathcal{H}^M_{\beta,0}}) \left( |\|u\||^2_{W^{\beta,2}_0} + \|\eta^t\|^2_{\mathcal{M}_{-1}} \right).  \label{cp6}
\end{align}
Employing a Gr\"onwall inequality to \eqref{cp6}, we obtain, for all $t\in[0,T],$
\begin{align}
|\|u(t)\||^2_{W^{\beta,2}_0} + \|\eta^t\|^2_{\mathcal{M}_{-1}} & + \int_0^t \left( \alpha\|\partial_tu(\tau)\|^2 + \|\eta^\tau\|^2_{L^2_{-\nu'}(\mathbb{R}_+;H^{-1})} \right) d\tau  \notag \\ 
& \le e^{C_\alpha} \left( |\|u(0)\||^2_{W^{\beta,2}_0} + \|\eta^0\|^2_{\mathcal{M}_{-1}} \right).  \label{cp7}
\end{align}
This shows the claim \eqref{cp0} holds for the regular solutions.
Since none of the above constants due to the above estimate actually depend on the assumptions of Theorem \ref{t:st-solns}, then standard approximation arguments can be employed to obtain \eqref{cp0} for the weak solutions as well.
\end{proof}

\begin{remark}
It is quite important to remark that in $N=3$ uniqueness for the nonviscous problem (where $\alpha=0$) remains an open problem (indeed, cf. \cite{Conti-Zelati10,GSZ09,GSSZ-09}).
\end{remark}

We now formalize the semi-dynamical system generated by Problem {\bf P}.

\begin{corollary}  \label{t:Lip-semif}
Let the assumptions of Theorem \ref{t:wk-sols} be satisfied. 
Additionally, assume (N4) holds. 
We can define a strongly continuous semigroup of solution operators $\mathcal{S}=(\mathcal{S}(t))_{t\ge0}$, for each $\alpha>0$ and $\beta\in(0,1)$,
\begin{equation*}
\mathcal{S}(t):\mathcal{X}^M_{\beta,0}\rightarrow \mathcal{X}^M_{\beta,0}
\end{equation*}
by setting, for all $t\ge 0,$ 
\begin{equation*}
\mathcal{S}(t)\phi_0:=\phi(t)
\end{equation*}
where $\phi(t)=(u(t),\eta^t)$ is the unique global weak solution to Problem {\textbf P}.
Furthermore, as a consequence of \eqref{cp0}, the semigroup $\mathcal{S}(t):\mathcal{X}^M_{\beta,0}\rightarrow \mathcal{X}^M_{\beta,0}$ is Lipschitz continuous on $\mathcal{X}^M_{\beta,0}$, uniformly in $t$ on compact intervals. 
\end{corollary}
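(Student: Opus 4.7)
The plan is to verify the four properties in turn: (i) $\mathcal{S}(t)$ is well-defined on $\mathcal{X}^M_{\beta,0}$, (ii) $\mathcal{S}(0)=\mathrm{Id}$ and the semigroup law $\mathcal{S}(t+s)=\mathcal{S}(t)\mathcal{S}(s)$, (iii) strong continuity of $t\mapsto \mathcal{S}(t)\phi_0$ in $\mathcal{X}^M_{\beta,0}$, and (iv) Lipschitz continuity of $\mathcal{S}(t)$, uniformly on compact time intervals.

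For (i), fix $\phi_0=(u_0,\eta_0)^{tr}\in \mathcal{X}^M_{\beta,0}$, so in particular $\phi_0\in\mathcal{H}^M_{\beta,0}$ with $F(u_0)\in L^1(\Omega)$. Theorem~\ref{t:wk-sols} produces at least one global weak solution $\phi=(u,\eta)$, and Proposition~\ref{t:cont-dep} (which is applicable since (N4) is now in force) shows it is unique. Thus $\mathcal{S}(t)\phi_0:=\phi(t)$ is unambiguously defined. To confirm that $\mathcal{S}(t)\phi_0$ lies in $\mathcal{X}^M_{\beta,0}$, I would combine three facts: the mass conservation identity \eqref{mass-con} gives $\langle u(t)\rangle=\langle u_0\rangle$, so $|\langle u(t)\rangle|\le M$; the regularity \eqref{wk-reg-1} places $u(t)\in W^{\beta,2}_0(\Omega)$ and $\eta^t\in\mathcal{M}^{(0)}_{-1}$; and the energy equality \eqref{wk-2} combined with $F(r)\ge -C_2$ (from (N3)) gives $\int_\Omega F(u(t))\,dx<\infty$, i.e.\ $F(u(t))\in L^1(\Omega)$.

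Property (ii) is routine: $\mathcal{S}(0)\phi_0=\phi_0$ by construction, while the semigroup law follows by uniqueness --- concatenating the solution on $[0,s]$ with the solution starting from $\phi(s)$ on $[0,t]$ yields a weak solution on $[0,t+s]$ with initial datum $\phi_0$, which must coincide with $\mathcal{S}(t+s)\phi_0$ by Proposition~\ref{t:cont-dep}. For (iii), continuity in the $\mathcal{H}^M_{\beta,0}$ component is already recorded in the remark immediately following Theorem~\ref{t:wk-sols}, namely $\phi\in C([0,T];\mathcal{H}^M_{\beta,0})$. For the extra summand $|\int F(u_1)-\int F(u_2)|^{1/2}$ in the metric of $\mathcal{X}^M_{\beta,0}$ (taking $u_1=u(t_n)$, $u_2=u(t)$), I would use the energy equality \eqref{wk-2}: the dissipation integral is absolutely continuous in $t$, and $|\|u(t)\||^2_{W^{\beta,2}_0}$ and $\|\eta^t\|^2_{\mathcal{M}_{-1}}$ are continuous by the previously established continuity of $\phi$; therefore $t\mapsto\int_\Omega F(u(t))\,dx$ is continuous, which is the required continuity of the second term in $d_{\mathcal{X}^M_{\beta,0}}$.

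For (iv), the Lipschitz bound on the $\mathcal{H}^M_{\beta,0}$ component comes straight from \eqref{cp0} of Proposition~\ref{t:cont-dep}, yielding the factor $e^{C_\alpha t/2}$ uniform in $t$ on compact intervals. The remaining piece is to estimate $|\int_\Omega(F(u_1(t))-F(u_2(t)))\,dx|^{1/2}$ in terms of $\|\phi_{01}-\phi_{02}\|_{\mathcal{H}^M_{\beta,0}}$. Using the mean value theorem,
\begin{align*}
\Bigl|\int_\Omega (F(u_1)-F(u_2))\,dx\Bigr| \le \int_\Omega |F'(\sigma u_1+(1-\sigma)u_2)|\,|u_1-u_2|\,dx
\end{align*}
for some $\sigma=\sigma(x)\in[0,1]$, then (N4) (which implies $|F'(r)|\le C(1+|r|^{\rho-1})$) together with a H\"older estimate and the a priori bounds on $u_1, u_2$ in $L^{p/(p-1)}$ (which propagate to a higher Lebesgue norm via the embedding $W^{\beta,2}_0(\Omega)\hookrightarrow L^\infty(\Omega)$ guaranteed by \eqref{beta-range}) bounds this integral by $Q(\|\phi_{01}\|,\|\phi_{02}\|)\|u_1-u_2\|$. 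Applying \eqref{cp0} and taking the square root completes the Lipschitz estimate in the full metric $d_{\mathcal{X}^M_{\beta,0}}$. The main obstacle I anticipate is precisely this last step: closing the $F$-difference estimate uniformly in $t$ on $[0,T]$, which demands uniform a priori control of $\|u_i(t)\|_{L^\infty}$ (or at least of $\|F'(u_i(t))\|_{L^q}$ with $q$ dual to the Lebesgue exponent controlled by $\|u_1-u_2\|_{L^{q'}}$); the combination of Theorem~\ref{t:wk-sols}, the embedding \eqref{beta-range}, and the growth assumption (N4) should provide exactly what is needed.
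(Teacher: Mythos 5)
The paper states this corollary without a separate proof, treating it as an immediate consequence of Theorem \ref{t:wk-sols} (existence), Proposition \ref{t:cont-dep} (uniqueness and the estimate \eqref{cp0}), mass conservation \eqref{mass-con}, and the energy identity \eqref{wk-2} --- exactly the ingredients you assemble, so your verification follows the intended argument. The only caveat is that your final step for the $F$-component of the metric produces a H\"older-$\tfrac{1}{2}$ rather than a genuinely Lipschitz bound in terms of $\|\phi_{01}-\phi_{02}\|_{\mathcal{H}^M_{\beta,0}}$, but the paper's own claim rests on \eqref{cp0} alone and is therefore no stronger.
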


\section{Absorbing sets and global attractors}  \label{s:gattr}

We now give a dissipation estimate for Problem {\bf P} from which we deduce the existence of a bounded absorbing set and an important uniform bound on the solutions of Problem {\textbf P}.
The existence of an absorbing set will also be used later to show that the semigroup of solution operators $\mathcal{S}$ admits a compact global attractor in the metric space $\mathcal{X}^M_{\beta,0}$.

\begin{lemma}  \label{t:diss-1}
Let $\phi_0=(u_0,\eta_0)^{tr}\in\mathcal{H}^M_{\beta,0}=W^{\beta,2}_0(\Omega)\times\mathcal{M}^{(0)}_{-1}$ for $\beta\in(\frac{N}{4},1),$ $N=1,2,3,$ be such that $F(u_0)\in L^1(\Omega).$
Assume (K1), (K3)-(K5) and (N1)-(N3) hold.
Assume $\phi=(u,\eta)^{tr}$ is a weak solution to Problem {\textbf P}.
There are positive constants $\kappa_1$ and $C$, each depending on $\Omega$ but independent of $t$, $\alpha$ and $\phi_0$, such that, for all $t\ge0$, the following holds 
\begin{align}
& \|\phi(t)\|^2_{\mathcal{H}^M_{\beta,0}} + \int_t^{t+1} \alpha\|\partial_tu(\tau)\|^2 d\tau \le Q(\|\phi_0\|_{\mathcal{H}^M_{\beta,0}})e^{-\kappa_1t} + C,  \label{dss-1}
\end{align}
for some monotonically increasing function $Q$ independent of $t$ and $\alpha$.
\end{lemma}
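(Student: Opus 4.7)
The overall strategy is to construct a Lyapunov-type functional $\mathcal{L}(t)$ equivalent to the energy $\mathcal{E}(t)$ from \eqref{wk-1} and establish a differential inequality of the form $\frac{d}{dt}\mathcal{L}(t)+\kappa_1 \mathcal{L}(t)\le C$, from which Gr\"onwall yields \eqref{dss-1}. As in Theorem \ref{t:wk-sols}, the estimates are first derived at the Galerkin level (where the required regularity is available), then transferred to weak solutions via weak lower-semicontinuity.

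The starting point is the energy equality \eqref{wk-2}. Assumption (K5) immediately converts the memory dissipation term:
\begin{equation*}
-2\int_0^\infty \nu'(s)\|\eta^\tau(s)\|^2_{H^{-1}} ds \ge 2\lambda \|\eta^\tau\|^2_{\mathcal{M}_{-1}},
\end{equation*}
producing $\frac{d}{dt}\mathcal{E}(t) + 2\alpha\|\partial_tu\|^2 + 2\lambda\|\eta^t\|^2_{\mathcal{M}_{-1}} \le 0$. This already furnishes exponential dissipation for the memory component but not for the remaining part of $\mathcal{E}$, i.e. $|\|u\||^2_{W^{\beta,2}_0}+2(F(u),1)$. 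To supply the missing dissipation, I would test \eqref{var-2} with $\xi=u$ (admissible since $u\in W^{\beta,2}_0(\Omega)$) to obtain
\begin{equation*}
|\|u\||^2_{W^{\beta,2}_0} + (F'(u),u) + \tfrac{\alpha}{2}\tfrac{d}{dt}\|u\|^2 = \langle \mu,u\rangle,
\end{equation*}
and control the right-hand side by invoking \eqref{var-1} with the multiplier $v=A_N^{-1}(u-\langle u_0\rangle)\in H^1(\Omega)$, which expresses $\tfrac{1}{2}\tfrac{d}{dt}\|u-\langle u_0\rangle\|^2_{H^{-1}}$ in terms of $\|\eta^t\|_{\mathcal{M}_{-1}}$ and $\|u-\langle u_0\rangle\|_{H^{-1}}$. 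Lower bounding $(F'(u),u)$ by $(F(u),1)$ with an $L^2$ remainder via \eqref{Fcons-1}--\eqref{Fcons-3} (using the mass bound $|\langle u_0\rangle|\le M$), and absorbing the remainder through the Poincar\'e-type inequality $\|u\|^2\le C|\|u\||^2_{W^{\beta,2}_0}$ on $W^{\beta,2}_0(\Omega)$, yields the auxiliary differential identity needed.

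The natural Lyapunov functional is
\begin{equation*}
\mathcal{L}(t) := \mathcal{E}(t) + \delta\Big[\tfrac{\alpha}{2}\|u(t)\|^2 + \tfrac{1}{2}\|u(t)-\langle u_0\rangle\|^2_{H^{-1}}\Big]
\end{equation*}
with $\delta>0$ small enough that $\mathcal{L}\sim\mathcal{E}$ and so that the cross terms produced by the auxiliary identity are absorbed into $\alpha\|\partial_tu\|^2$ and $\lambda\|\eta^t\|^2_{\mathcal{M}_{-1}}$ via Young's inequality. This gives $\tfrac{d}{dt}\mathcal{L}+\kappa_1\mathcal{L}\le C$, and Gr\"onwall delivers $\mathcal{L}(t)\le \mathcal{L}(0)e^{-\kappa_1 t}+C/\kappa_1$; combining with the coercivity from (N3) (so $\mathcal{E}(t)\ge c\|\phi(t)\|^2_{\mathcal{H}^M_{\beta,0}}-C$) and bounding $\mathcal{L}(0)\le Q(\|\phi_0\|_{\mathcal{H}^M_{\beta,0}})$ using $F(u_0)\in L^1(\Omega)$ produces the pointwise part of \eqref{dss-1}. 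The time-integral term $\int_t^{t+1}\alpha\|\partial_tu\|^2 d\tau$ is then immediate on integrating the upgraded energy inequality on $[t,t+1]$ and invoking the established uniform bound on $\mathcal{E}$.

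The chief obstacles I anticipate are twofold: (i) keeping all constants \emph{independent of} $\alpha$, as the statement demands—this is delicate because natural multiplier estimates against $\mu$ create factors $\sim\alpha^{-1}$ that must be absorbed by the leading energy, constraining the admissible choice of $\delta$; and (ii) navigating the fact that constants do not belong to $W^{\beta,2}_0(\Omega)$ in the restricted fractional setting, so one cannot simply test \eqref{var-2} with $u-\langle u_0\rangle$ and must work through the $A_N^{-1}$ detour in \eqref{var-1}, using that $\langle \eta^t\rangle=0$ and $\langle u(t)\rangle=\langle u_0\rangle$ from \eqref{mass-con}.
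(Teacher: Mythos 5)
There is a genuine gap at the heart of your argument: the mechanism you propose for generating dissipation of the $u$-part of the energy does not work. Testing \eqref{var-2} with $\xi=u$ yields
\begin{equation*}
|\|u\||^2_{W^{\beta,2}_0} + (F'(u),u) + \tfrac{\alpha}{2}\tfrac{d}{dt}\|u\|^2 = \langle \mu,u\rangle,
\end{equation*}
but this is a tautology: \eqref{var-2} is the \emph{definition} of $\mu$, so $\langle\mu,u\rangle$ is by construction equal to the left-hand side and the identity carries no new information. You then propose to control $\langle\mu,u\rangle$ by testing \eqref{var-1} with $v=A_N^{-1}(u-\langle u_0\rangle)$; however, \eqref{var-1} reads $\langle\partial_t u,v\rangle + \int_0^\infty\nu(s)\langle\eta^t(s),v\rangle\,ds=0$ and does not contain $\mu$ at all. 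With that multiplier you obtain only $\tfrac12\tfrac{d}{dt}\|u-\langle u_0\rangle\|^2_{H^{-1}} = -\int_0^\infty\nu(s)(\eta^t(s),u-\langle u_0\rangle)_{H^{-1}}ds$, a memory cross term -- not $(\mu,u)$. (This is precisely where the memory structure breaks the classical Cahn--Hilliard trick: in $\partial_t u=\Delta\mu$ the multiplier $A_N^{-1}(u-\langle u\rangle)$ produces $-(\mu,u-\langle u\rangle)$, but here $\partial_t u$ is driven by the history variable $\eta$, not by the instantaneous $\mu$.) Consequently your candidate functional $\mathcal{L}=\mathcal{E}+\delta[\tfrac{\alpha}{2}\|u\|^2+\tfrac12\|u-\langle u_0\rangle\|^2_{H^{-1}}]$ satisfies only $\tfrac{d}{dt}\mathcal{L}\le -2\alpha\|\partial_t u\|^2 - 2\lambda\|\eta^t\|^2_{\mathcal{M}_{-1}} + \delta\alpha(\partial_t u,u) + \delta\,|{\rm cross\ term}|$, with no negative multiple of $|\|u\||^2_{W^{\beta,2}_0}+(F(u),1)$ appearing, so no inequality of the form $\tfrac{d}{dt}\mathcal{L}+\kappa_1\mathcal{L}\le C$ follows.

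The missing idea is that $\mu$ enters the system as a source only in the $\eta$-equation \eqref{p-3}, so the coupling must go through $\eta$. The paper's functional is $\mathcal{Y}(t)=\mathcal{E}(t)+\ep\alpha\|u\|^2-2\ep\int_0^\infty\nu(s)(u,A_N^{-1}\eta^t(s))\,ds$. Differentiating the cross term and using $\partial_t\eta^t=T_r\eta^t+A_N\mu$ together with $k_0=\int_0^\infty\nu(s)\,ds=1$ produces
\begin{equation*}
-\frac{d}{dt}\int_0^\infty\nu(s)(u,A_N^{-1}\eta^t(s))\,ds
= \|\partial_t u\|^2_{H^{-1}} - \int_0^\infty\nu'(s)(u,A_N^{-1}\eta^t(s))\,ds - \frac{\alpha}{2}\frac{d}{dt}\|u\|^2 - |\|u\||^2_{W^{\beta,2}_0} - (F'(u),u),
\end{equation*}
i.e.\ exactly the term $-(u,\mu)$ and hence the negative quantities $-|\|u\||^2_{W^{\beta,2}_0}-(F'(u),u)$ that, via \eqref{Fcons-1}--\eqref{Fcons-2}, dominate $-(|F(u)|,1)$ up to lower-order terms; the remainders $\|\partial_t u\|^2_{H^{-1}}$ and the $\nu'$-cross term are absorbed using \eqref{eq00}, (K4) and (K5). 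The rest of your outline (Galerkin justification, (K5) for the memory dissipation, (N3) for coercivity, Gr\"onwall, and integrating over $[t,t+1]$ for the $\alpha\|\partial_t u\|^2$ term) matches the paper and is fine, as is your observation that one must avoid testing \eqref{var-2} with $u-\langle u_0\rangle$; but without the $u$--$\eta$ cross functional the central differential inequality is not obtained.
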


\begin{proof}
The idea of the proof is from \cite{CGG11}.
We give a formal calculation that can be justified by a suitable Faedo-Galerkin approximation based on the proof of Theorem \ref{t:wk-sols} above.
To begin, define the functional, for all $t\ge0,$	 
\begin{align}
\mathcal{Y}(t):=\mathcal{E}(t)+\ep\alpha\|u(t)\|^2-2\ep\int_0^\infty\nu(s)\left( u(t),A^{-1}_N\eta^t(s) \right)ds,  \label{dss-3} 
\end{align}
where $\ep\in(0,\lambda)$ will be chosen sufficiently small later.
From \eqref{p-1}-\eqref{p-3}, we find
\begin{align}
-\frac{d}{dt} & \int_0^\infty\nu(s)(u,A^{-1}_N\eta^t(s))ds  \notag \\ 
& = \|\partial_tu\|^2_{H^{-1}}-\int_0^\infty\nu(s)(u,A^{-1}_N\partial_t\eta^t(s))ds  \notag \\ 
& = \|\partial_tu\|^2_{H^{-1}}-\int_0^\infty\nu'(s)(u,A^{-1}_N\eta^t(s))ds-\int_0^\infty\nu(s)(u,\mu)ds  \notag \\ 
& = \|\partial_tu\|^2_{H^{-1}}-\int_0^\infty\nu'(s)(u,A^{-1}_N\eta^t(s))ds-\frac{\alpha}{2}\frac{d}{dt}\|u\|^2-|\|u\||^2_{W^{\beta,2}_0}-(F'(u),u).  \label{dss-3.5}
\end{align}
Differentiating $\mathcal{Y}$ with respect to $t$ while keeping in mind \eqref{energy}, \eqref{eq9} (without the index $n$) and \eqref{dss-3.5}, we find 
\begin{align}
\frac{d}{dt}\mathcal{Y}+\ep_0\mathcal{Y}-2\int_0^\infty\nu'(s)\|\eta^t(s)\|^2_{H^{-1}}ds=h(t),  \label{dss-3.9}
\end{align}
for $\ep_0\in(0,\ep)$ where
\begin{align}
h(t)= & -2\alpha\|\partial_tu(t)\|^2+2\ep\|\partial_tu(t)\|^2_{H^{-1}}-2\ep\int_0^\infty\nu'(s)(u(t),A^{-1}_N\eta^t(s))ds  \notag \\ 
& - 2\ep_0(F'(u(t))u(t)-F(u(t)),1) - 2(\ep-\ep_0)(F'(u(t)),u(t)) + \ep_0\|\eta^t\|^2_{\mathcal{M}_{-1}}  \notag \\ 
&-(2\ep-\ep_0)|\|u(t)\||^2_{W^{\beta,2}_0}+\ep_0\ep\alpha\|u(t)\|^2-2\ep_0\ep\int_0^\infty\nu(s)(u(t),A^{-1}_N\eta^t(s))ds+\ep_0C.  \label{dss-4}
\end{align} 
From \eqref{Fcons-1} and \eqref{Fcons-2} (with $M=0$) it follows that
\begin{align}
& -2\ep_0(F'(u(t))u(t)-F(u(t)),1) - 2(\ep-\ep_0)(F'(u(t)),u(t))  \notag \\ 
& \le -(\ep-\ep_0)(|F(u)|,1) + \ep_0C|\|u\||^2_{W^{\beta,2}_0}.  \label{dss-4.1}
\end{align}
Next, using assumption (K4) and the embeddings $H^{-1}(\Omega)\hookleftarrow L^2(\Omega)\hookleftarrow W^{\beta,2}_0(\Omega)$, we find 
\begin{align}
-2\ep\int_0^\infty\nu'(s)(u,A^{-1}_N\eta^t(s))ds & = -2\ep\int_0^\infty\nu'(s)(A^{-1/2}_Nu,A^{-1/2}_N\eta^t(s))ds  \notag \\ 
& \le -\ep\int_0^\infty\nu'(s)\left( \frac{1}{\nu_0}|\|u\||^2_{W^{\beta,2}_0}+C\nu_0\|\eta^t(s)\|^2_{H^{-1}} \right)ds  \notag \\
& \le \ep|\|u\||^2_{W^{\beta,2}_0}-\ep C\int_0^\infty\nu'(s)\|\eta^t(s)\|^2_{H^{-1}}ds,  \label{dss-4.2}
\end{align}
and, now with (K3) and \eqref{eq00} (without the index $n$),
\begin{align}
-2\ep_0\ep \int_0^\infty\nu(s)(u,A^{-1}_N\eta^t(s))ds & \le \ep_0\ep C|\|u\||^2_{W^{\beta,2}_0}+\ep_0\ep\|\eta^t\|^2_{\mathcal{M}_{-1}}.  \label{dss-4.3}
\end{align}
Together \eqref{dss-4}-\eqref{dss-4.3} make the following estimate
\begin{align}
h \le & -2\alpha\|\partial_tu\|^2+2\ep\|\partial_tu\|^2_{H^{-1}}-(\ep-\ep_0(1+C+\ep\alpha C))|\|u\||^2_{W^{\beta,2}_0}+2\ep_0\|\eta^t\|^2_{\mathcal{M}_{-1}}  \notag \\ 
& -\ep C \int_0^\infty\nu'(s)\|\eta^t(s)\|^2_{H^{-1}}ds+C.  \label{dss-5}
\end{align}
Here we employ assumption (K5) so that from \eqref{dss-3.9} and \eqref{dss-5} we are able to fix $\ep\in(0,\lambda)$ and $\ep_0\in(0,\ep)$ sufficiently small to, in turn, find positive constants $\ep_1,\ep_2,\ep_3$ so that there holds
\begin{align}
\frac{d}{dt}\mathcal{Y}+\ep_1\mathcal{Y}+2\|\eta^t\|^2_{\mathcal{M}_{-1}}+\ep_2\alpha\|\partial_tu\|^2+\ep_3|\|u\||^2_{W^{\beta,2}_0} \le C.  \label{dss-7}
\end{align}
It is important to note that $C$ on the right-hand side of \eqref{dss-7} is independent of $t$ and $\phi_0.$
One can readily show (cf. \eqref{energy}, \eqref{eq10}-\eqref{eq10.5}) that there holds, for all $t\ge0,$
\begin{align}
C_1\|\phi(t)\|^2_{\mathcal{H}^M_{\beta,0}}-C_2 \le \mathcal{Y}(t) \le Q(\|\phi_0\|_{\mathcal{H}^M_{\beta,0}}),  \label{dss-8}
\end{align}
for some positive constants $C_1,C_2$, and for some monotone nondecreasing function $Q$ independent of $t$.
Finally, by applying a Gr\"onwall type inequality to \eqref{dss-7} (cf. e.g. \cite[Lemma 2.5]{GPV03}), then integrating the result and applying \eqref{dss-8} yield the claim \eqref{dss-1}.
This finishes the proof.
\end{proof}

We immediately deduce the existence of a bounded absorbing set from Lemma \ref{t:diss-1}.

\begin{proposition}
Let the assumptions of Lemma \ref{t:diss-1} hold.
Additionally, assume (N4) holds. 
Then there exists $R_0>0$, independent of $t$ and $\phi_0$, such that $\mathcal{S}(t)$ possesses an absorbing ball $\mathcal{B}^M_{\beta,0}(R_0)\subset\mathcal{H}^M_{\beta,0},$ bounded in $\mathcal{H}^M_{\beta,0}$.
Precisely, for any bounded subset $B\subset\mathcal{H}^M_{\beta,0},$ there exists $t_0=t_0(B)>0$ such that $\mathcal{S}(t)B\subset\mathcal{B}^M_{\beta,0}(R_0)$, for all $t\ge t_0.$
Moreover, for every $R>0$, there exists $C_*=C_*(R)\ge0,$ such that, for any $\phi_0\in\mathcal{B}^M_{\beta,0}(R),$
\begin{align}
\sup_{t\ge0}\|\mathcal{S}(t)\phi_0\|_{\mathcal{H}^M_{\beta,0}}+\int_0^\infty\|\partial_tu(\tau)\|^2d\tau\le C_*,  \label{unif-bnd}
\end{align}
where $\mathcal{B}^M_{\beta,0}(R)$ denotes the ball in $\mathcal{H}^M_{\beta,0}$ of radius $R$, centered at ${\mathbf 0}$.
\end{proposition}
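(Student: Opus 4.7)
The plan is to read off both conclusions almost directly from already-established estimates, with one small but crucial twist for the integral bound. Specifically, I will use the dissipation inequality \eqref{dss-1} of Lemma \ref{t:diss-1} to obtain the absorbing ball and the pointwise bound on $\|\mathcal{S}(t)\phi_0\|_{\mathcal{H}^M_{\beta,0}}$, and then I will appeal to the energy identity \eqref{wk-2} (rather than summing \eqref{dss-1} over unit intervals) to control $\int_0^\infty \|\partial_tu(\tau)\|^2 d\tau$.

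First, I choose the radius of the absorbing ball by setting, for concreteness, $R_0^2 := C+1$ with $C$ the constant from \eqref{dss-1}; since $C$ depends only on $\Omega$ and not on $t$, $\alpha$, or $\phi_0$, the same is true of $R_0$. Given any bounded $B \subset \mathcal{H}^M_{\beta,0}$ contained in the ball of radius $R$, the monotonicity of the function $Q$ in Lemma \ref{t:diss-1} allows me to pick
\[
t_0(B) := \max\!\left\{0,\; \tfrac{1}{\kappa_1}\ln Q(R)\right\},
\]
so that $Q(R)e^{-\kappa_1 t} \le 1$ for every $t \ge t_0(B)$. Substituting into \eqref{dss-1} then gives $\|\mathcal{S}(t)\phi_0\|_{\mathcal{H}^M_{\beta,0}}^2 \le 1+C = R_0^2$ uniformly in $\phi_0 \in B$ and $t \ge t_0(B)$, establishing the absorbing property. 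The pointwise sup bound in \eqref{unif-bnd} is then a direct consequence of \eqref{dss-1} without taking $t$ large: for $\phi_0 \in \mathcal{B}^M_{\beta,0}(R)$,
\[
\sup_{t\ge 0}\|\mathcal{S}(t)\phi_0\|_{\mathcal{H}^M_{\beta,0}}^2 \le Q(R) + C.
\]

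The one step that is not a cosmetic corollary of Lemma \ref{t:diss-1} is the $L^2(0,\infty)$ control of $\partial_t u$. Here the obstacle is that naively summing \eqref{dss-1} over the intervals $[n, n+1]$ yields the divergent tail $\sum_n C = \infty$. The right tool is the energy identity \eqref{wk-2}: both integrands, $\alpha \|\partial_tu(\tau)\|^2$ and $-\nu'(s)\|\eta^\tau(s)\|^2_{H^{-1}}$, are nonnegative by assumption (K2) (which is implied by (K5)), and the constant $C$ chosen in \eqref{wk-1} makes $\mathcal{E}(t) \ge 0$ for all $t \ge 0$. Thus \eqref{wk-2} rearranges to
\[
2\alpha \int_0^t \|\partial_tu(\tau)\|^2 d\tau \;\le\; \mathcal{E}(0) \;\le\; Q(\|\phi_0\|_{\mathcal{H}^M_{\beta,0}}),
\]
where the last bound is \eqref{eq10.5}. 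Letting $t\to \infty$ and combining with the sup bound above, I obtain \eqref{unif-bnd} with $C_* = C_*(R,\alpha) := Q(R) + C + (2\alpha)^{-1}Q(R)$. Strictly speaking \eqref{wk-2} is derived for weak solutions with $F(u_0)\in L^1(\Omega)$, i.e.\ for $\phi_0 \in \mathcal{X}^M_{\beta,0}$ where the semigroup is defined; no further regularity argument is required, and the proof is complete.
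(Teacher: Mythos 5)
Your proposal is correct. The paper offers no written proof here --- it simply asserts that the proposition is ``immediately deduced'' from Lemma \ref{t:diss-1} --- and your treatment of the absorbing ball and of the pointwise bound $\sup_{t\ge0}\|\mathcal{S}(t)\phi_0\|^2_{\mathcal{H}^M_{\beta,0}}\le Q(R)+C$ is exactly that immediate deduction. Where you genuinely add something is the tail integral: you are right that \eqref{dss-1} controls only $\int_t^{t+1}\alpha\|\partial_tu\|^2\,d\tau$ on unit windows, and summing those windows produces a divergent $\sum_n C$, so the $L^2(0,\infty)$ bound on $\partial_tu$ does \emph{not} follow from Lemma \ref{t:diss-1} alone. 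Passing instead through the energy identity \eqref{wk-2}, with $\mathcal{E}\ge0$ and both dissipation terms nonnegative under (K2)/(K5), is the right (and essentially the only clean) way to get $2\alpha\int_0^\infty\|\partial_tu(\tau)\|^2d\tau\le\mathcal{E}(0)$; this is presumably what the authors had in mind, since $\mathcal{E}$ is their Lyapunov functional, but they do not say it. Two minor caveats, neither fatal: your $C_*$ inherits a factor $\alpha^{-1}$ (the statement writes $C_*=C_*(R)$, but since $\alpha>0$ is a fixed parameter and \eqref{dss-1} itself only controls the $\alpha$-weighted integral, this dependence is unavoidable and consistent with the paper's conventions); and the bound $\mathcal{E}(0)\le Q(\|\phi_0\|_{\mathcal{H}^M_{\beta,0}})$ silently controls $\int_\Omega F(u_0)$ by the $\mathcal{H}^M_{\beta,0}$-norm alone, an imprecision you share with the paper's own \eqref{eq10.5} and \eqref{dss-8} rather than introduce.
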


Throughout the remainder of the article, we simply write $\mathcal{B}^M_{\beta,0}$ in place of $\mathcal{B}^M_{\beta,0}(R_0)$ to denote the bounded absorbing set admitted by the semigroup of solution operators $\mathcal{S}(t)$.

For the rest of this section, our aim is to prove the following.

\begin{theorem}  \label{t:global}
Let the assumptions of Lemma \ref{t:diss-1} hold.
Additionally, assume (N4) holds. 
The dynamical system $(\mathcal{X}^M_{\beta,0},\mathcal{S}(t))$ (see Corollary \ref{t:Lip-semif}) possesses a connected global attractor $\mathcal{A}^M_{\beta,0}$ in $\mathcal{H}^M_{\beta,0}.$
Precisely, 

\begin{description}

\item[1] for each $t\geq 0$, $\mathcal{S}(t)\mathcal{A}^M_{\beta,0} = \mathcal{A}^M_{\beta,0}$, and 

\item[2] for every nonempty bounded subset $B$ of $\mathcal{H}_{\beta,0}^M$,
\[
\lim_{t\rightarrow\infty}{\rm{dist}}_{\mathcal{H}_{\beta,0}^M}(\mathcal{S}(t)B,\mathcal{A}^M_{\beta,0}) := \lim_{t\rightarrow\infty}\sup_{\zeta\in B}\inf_{\xi\in\mathcal{A}^M_{\beta,0}}\|\mathcal{S}(t)\zeta-\xi\|_{\mathcal{H}^M_{\beta,0}} = 0.
\]

\end{description}

\noindent Additionally,

\begin{description}

\item[3] the global attractor is the unique maximal compact invariant subset in $\mathcal{H}^M_{\beta,0}$ given by
\[
\mathcal{A}^M_{\beta,0} := \omega (\mathcal{B}^M_{\beta,0}) := \bigcap_{s\geq 0}{\overline{\bigcup_{t\geq s}\mathcal{S}(t)\mathcal{B}^M_{\beta,0}}}^{\mathcal{H}^M_{\beta,0}}.
\]

\end{description}

\noindent Furthermore, 

\begin{description}

\item[4] The global attractor $\mathcal{A}^M_{\beta,0}$ is connected and given by the union of the unstable manifolds connecting the equilibria of $\mathcal{S}(t)$.

\item[5] For each $\zeta_0=(\phi_0,\theta_0)^{tr}\in\mathcal{H}^M_{\beta,0}$, the set $\omega(\zeta_0)$ is a connected compact invariant set, consisting of the fixed points of $\mathcal{S}(t).$

\end{description}

\end{theorem}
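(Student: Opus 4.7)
The plan is to invoke the standard characterization of global attractors for gradient semigroups: once we have (i) a bounded absorbing set, (ii) a continuous strict Lyapunov functional on $\mathcal{X}^M_{\beta,0}$, (iii) a bounded set of equilibria, and (iv) asymptotic smoothness on bounded sets, a classical theorem of Babin--Vishik (cf.\ also Hale, Temam) yields the compact global attractor $\mathcal{A}^M_{\beta,0}=\omega(\mathcal{B}^M_{\beta,0})$ together with its characterization as the union of unstable manifolds over the equilibria. Items (1)--(5) of the theorem (invariance, attraction, maximality, $\omega$-limit structure, connectedness) then all follow from that characterization.

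Of the four ingredients, (i) has already been established, so I would dispose of (ii) and (iii) quickly. For (ii), take $\mathcal{E}$ from \eqref{wk-1} as the candidate on $\mathcal{X}^M_{\beta,0}$: the energy equality \eqref{wk-2}, together with (K2) and $\alpha>0$, shows $t\mapsto\mathcal{E}(\mathcal{S}(t)\phi_0)$ is non-increasing, while continuity on $\mathcal{X}^M_{\beta,0}$ follows directly from the definition of the metric $d_{\mathcal{X}^M_{\beta,0}}$. Strictness is precisely the content of Remark \ref{r:gradient}. For (iii), any equilibrium $\phi_0\in\mathcal{N}$ satisfies $\mathcal{S}(t)\phi_0=\phi_0$ for all $t\ge 0$, so letting $t\to\infty$ in the dissipative estimate \eqref{dss-1} gives $\|\phi_0\|^2_{\mathcal{H}^M_{\beta,0}}\le C$, which shows $\mathcal{N}$ is bounded.

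The hard part is (iv), asymptotic smoothness. Since the embedding $\mathcal{H}^M_{\beta,1}\hookrightarrow\mathcal{H}^M_{\beta,0}$ is not compact (owing to the memory component), no straightforward smoothing argument is available. My plan is to produce a stabilization inequality of the form
\begin{align*}
\|\mathcal{S}(t)\phi_{01}-\mathcal{S}(t)\phi_{02}\|^2_{\mathcal{H}^M_{\beta,0}}\ \le\ C_1 e^{-\kappa t}\|\phi_{01}-\phi_{02}\|^2_{\mathcal{H}^M_{\beta,0}}\ +\ C_2\,\Psi_T(\phi_{01},\phi_{02}),
\end{align*}
valid for $\phi_{0i}\in\mathcal{B}^M_{\beta,0}$ and $t\in[0,T]$, where $\Psi_T$ is a precompact pseudometric on $\mathcal{B}^M_{\beta,0}$ --- typically built from $\sup_{[0,T]}\|u_1-u_2\|^2_{L^2(\Omega)}$, which is compactly controlled on bounded sets of $\mathcal{H}^M_{\beta,0}$ via $W^{\beta,2}_0(\Omega)\hookrightarrow\hookrightarrow L^2(\Omega)$ and Aubin--Lions. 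The derivation refines the continuous dependence computation of Proposition \ref{t:cont-dep}: I would retain the full contractive contributions of $\alpha\|\partial_t u\|^2$ and of the memory dissipation (using (K5) to extract the exponential decay factor on $\|\eta^t\|^2_{\mathcal{M}_{-1}}$), and push the cross term $(F'(u_1)-F'(u_2),\partial_t u)$ --- bounded via (N4) and the embedding \eqref{beta-range} --- entirely into $\Psi_T$ by pairing with $u_1-u_2$ in $L^2(\Omega)$ rather than through the $W^{-\beta,2}\times W^{\beta,2}_0$ pairing used in \eqref{cp5}. The Chueshov--Lasiecka criterion then delivers asymptotic smoothness on $\mathcal{B}^M_{\beta,0}$.

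With all four ingredients in place, the abstract gradient-system theorem yields the existence of $\mathcal{A}^M_{\beta,0}$ and all the structural claims (1)--(5). The technical heart of the proof is the stabilization inequality in the asymptotic-smoothness step; here the viscosity $\alpha>0$ is essential, since it is precisely the $\alpha\|\partial_t u\|^2$ contribution (absent in the nonviscous case, cf.\ the remark after Proposition \ref{t:cont-dep}) that provides the coercivity needed to absorb the nonlinear difference into the precompact pseudometric.
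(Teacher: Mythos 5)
Your proposal follows essentially the same route as the paper: the bounded absorbing set of Lemma \ref{t:diss-1}, the Lyapunov functional $\mathcal{E}$ from \eqref{wk-1}--\eqref{wk-2} together with Remark \ref{r:gradient} for the gradient structure, and asymptotic compactness obtained by refining the continuous dependence computation into a stabilization inequality that makes $\mathcal{S}(t_*)$ a strict contraction up to a precompact pseudometric --- exactly the content of Lemma \ref{t:diff-est} and Proposition \ref{t:pseudometric}, which the paper phrases as an $\alpha$-contraction in the sense of Hale/Zheng--Milani rather than Chueshov--Lasiecka asymptotic smoothness, before citing the abstract gradient-system theory for items (1)--(5). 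The only cosmetic differences are your choice of pseudometric ($\sup_{[0,T]}\|u_1-u_2\|^2_{L^2}$ versus the paper's $\int_0^{t_*}\bigl(\|\nabla\mu_1-\nabla\mu_2\|^2+\|u_1-u_2\|^2\bigr)d\tau$), both justified by the same Aubin--Lions compactness, and your handling of the nonlinear cross term, which mirrors the paper's use of (N4) and Young's inequality with the viscosity $\alpha>0$.
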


With the existence of a bounded absorbing set $\mathcal{B}^M_{\beta,0}$ (in Lemma \ref{t:diss-1}), the existence of a global attractor now depends on the precompactness of the semigroup of solution operators $\mathcal{S}$. 
To this end we will show there is a $t_*>0$ such that the map $\mathcal{S}(t_*)$ is a so-called $\alpha$-contraction on $\mathcal{B}^M_{\beta,0}$; that is, there exist a time $t_*>0$, a constant $0<\kappa<1$ and a precompact pseudometric $M_*$ on $\mathcal{B}^M_{\beta,0}$ such that, for all $\phi_{01},\phi_{02} \in \mathcal{B}^M_{\beta,0}$,
\begin{equation}
\|\mathcal{S}(t_*)\phi_{01} - \mathcal{S}(t_*)\phi_{02}\|_{\mathcal{H}^M_{\beta,0}} \le \kappa\|\phi_{01} - \phi_{02}\|_{\mathcal{H}^M_{\beta,0}} + M_*(\phi_{01},\phi_{02}). \label{pseudo-m}
\end{equation}
Such a contraction is commonly used in connection with phase-field type equations as an alternative to establish the precompactness of a semigroup; for some particular recent results see \cite{Grasselli-2012,Grasselli-Schimperna-2011,Zheng&Milani05}.

\begin{lemma}  \label{t:diff-est}
Under the assumptions of Proposition \ref{t:cont-dep} where $\phi_{01},\phi_{02}\in\mathcal{B}^M_{\beta,0}$, there are positive constants $\kappa_2, C_1$ and $C_{2\alpha}\sim\alpha^{-1}$, each depending on $\Omega$ but independent of $t$ and $\phi_{01},\phi_{02}$, such that, for all $t\ge0,$ 
\begin{align}
\|\phi_1(t)-\phi_2(t)\|^2_{\mathcal{H}^M_{\beta,0}} & \le C_1e^{-\kappa_2t} \|\phi_1(0)-\phi_2(0)\|^2_{\mathcal{H}^M_{\beta,0}} \notag \\
& + C_{2\alpha}\int_0^t \left(\|\nabla\mu_1(\tau)-\nabla\mu_2(\tau)\|^2+\|u_1(\tau)-u_2(\tau)\|^2 \right)d\tau.  \label{pm-0}
\end{align}
\end{lemma}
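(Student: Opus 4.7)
The plan is to upgrade the growth estimate \eqref{cp0} of Proposition \ref{t:cont-dep} to an exponential-decay bound modulo compact error terms, adapting the modified Lyapunov-functional technique used in Lemma \ref{t:diss-1}. The compact right-hand side $\int_0^t(\|\nabla\mu\|^2 + \|u\|^2)\,d\tau$ in \eqref{pm-0} is precisely what is needed to play the role of the precompact pseudometric \eqref{pseudo-m} in the forthcoming $\alpha$-contraction argument for the global attractor.

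Setting $u = u_1 - u_2$, $\eta = \eta_1 - \eta_2$, and $\mu = \mu_1 - \mu_2$, which satisfy the difference equations \eqref{cp1}--\eqref{cp3}, I would introduce the auxiliary functional paralleling \eqref{dss-3},
$$\mathcal{Y}(t) := |\|u(t)\||^2_{W^{\beta,2}_0} + \|\eta^t\|^2_{\mathcal{M}_{-1}} + \epsilon\alpha\|u(t)\|^2 - 2\epsilon\int_0^\infty \nu(s)(u(t), A_N^{-1}\eta^t(s))\,ds,$$
with $\epsilon\in(0,\lambda)$ to be fixed small enough that, as in \eqref{dss-8}, $\mathcal{Y}$ is equivalent (up to absolute constants) to $\|\phi_1-\phi_2\|^2_{\mathcal{H}^M_{\beta,0}}$. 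Differentiating $\mathcal{Y}$ along \eqref{cp1}--\eqref{cp3} mimics the derivation \eqref{dss-3.5}--\eqref{dss-3.9} of Lemma \ref{t:diss-1}, producing a differential identity whose new nonlinear contributions are the pairings $-(F'(u_1)-F'(u_2),\partial_t u)$ (from the basic energy identity \eqref{cp4}) and a term proportional to $-(F'(u_1)-F'(u_2),u)$ (from the cross-term in $\mathcal{Y}$).

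The decisive step is to bound these two pairings by $\|u\|^2$ alone, rather than by $|\|u\||^2_{W^{\beta,2}_0}$ as in the naive estimate \eqref{cp5}, since a $W^{\beta,2}_0$-norm error on the right would be fatal to exponential decay. Just as in Proposition \ref{t:cont-dep}, this is done by working first with strong solutions from Theorem \ref{t:st-solns} (obtained via smooth approximation of $\phi_{0i}\in\mathcal{B}^M_{\beta,0}$); for these, the embedding $D(A_{E,\beta})\hookrightarrow L^\infty(\Omega)$ from \eqref{beta-range}, combined with the uniform dissipation of Lemma \ref{t:diss-1}, yields $\|u_i\|_{L^\infty(\Omega\times(0,T))}\le C(R_0)$. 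Assumption (N4) then gives $\|F'(u_1)-F'(u_2)\|\le C\|u\|$, whence, by Young's inequality,
$$|(F'(u_1)-F'(u_2),\partial_t u)| \le C_\alpha\|u\|^2 + \tfrac{\alpha}{2}\|\partial_t u\|^2, \qquad |(F'(u_1)-F'(u_2),u)|\le C\|u\|^2.$$
The $\|\nabla\mu\|^2$ contribution on the right of \eqref{pm-0} enters naturally when handling the pairing $-(u,\bar\mu)$, with $\bar\mu=\mu-\langle\mu\rangle$, that appears after using \eqref{cp3} to rewrite $A_N^{-1}\partial_t\eta^t = A_N^{-1}T_r\eta^t+\bar\mu$ in the computation of $-\frac{d}{dt}\int\nu(s)(u,A_N^{-1}\eta^t(s))\,ds$; since $\int_\Omega\bar\mu=0$,
$$|(u,\bar\mu)| = |(\bar u,\bar\mu)| \le \sqrt{\lambda_\Omega}\|u\|\|\nabla\mu\| \le \delta\|\nabla\mu\|^2 + C_\delta\|u\|^2,$$
avoiding any issue with $\langle\mu\rangle$.

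Gathering these bounds and choosing $\epsilon$ and $\epsilon_0$ small enough as in \eqref{dss-4}--\eqref{dss-5}, I expect to arrive at a differential inequality of the form
$$\frac{d}{dt}\mathcal{Y} + \kappa_2 \mathcal{Y} \le C_{2\alpha}(\|\nabla\mu\|^2 + \|u\|^2),$$
whose integration by Gr\"onwall's inequality together with the equivalence \eqref{dss-8} immediately yields \eqref{pm-0} for strong solutions. The weak-solution case then follows from the density/approximation argument used at the end of the proof of Theorem \ref{t:wk-sols} and Proposition \ref{t:cont-dep}. The principal obstacle, as described, is the nonlinear estimate: the naive continuous-dependence bound \eqref{cp5} produces $|\|u\||^2_{W^{\beta,2}_0}$ on the right and would reproduce only the growth estimate \eqref{cp0}; the upgrade to exponential decay genuinely requires the uniform $L^\infty$-regularity of strong solutions from Theorem \ref{t:st-solns}, and that is the essential qualitative improvement distinguishing this lemma from Proposition \ref{t:cont-dep}.
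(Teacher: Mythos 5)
Your strategy differs from the paper's. The paper does not introduce a Lyapunov functional with the cross term $-2\ep\int_0^\infty\nu(s)(u,A_N^{-1}\eta^t(s))\,ds$; instead it takes the basic difference energy identity \eqref{pm-0.3} (with the memory dissipation converted to $\lambda\|\eta^t\|^2_{\mathcal{M}_{-1}}$ via (K5)) and supplements it by testing the chemical-potential equation \eqref{cp2} with $u$ itself. That single extra multiplication produces the missing dissipation $|\|u\||^2_{W^{\beta,2}_0}$ on the left at the price of the term $(\mu,u)\le\tfrac12\|\mu\|^2+\tfrac12\|u\|^2$, which is precisely the source of the $\mu$- and $u$-integrals in \eqref{pm-0}; the pairing $(F'(u_1)-F'(u_2),u)$ is then handled for free by semiconvexity (N1), $(F'(u_1)-F'(u_2),u)\ge-c_F\|u\|^2$, with no $L^\infty$ information. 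Adding $\tfrac{\alpha}{2}\|u\|^2$ to both sides gives $\tfrac{d}{dt}\mathcal{N}+c\mathcal{N}+\alpha\|\partial_tu\|^2\le\|\mu\|^2+\cdots$ and Gr\"onwall concludes. Your cross-term construction is the natural analogue of Lemma \ref{t:diss-1} and can be made to work, but it is heavier than necessary here: the viscous term $\alpha\partial_tu$ already makes testing with $u$ productive.

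Two points in your write-up need repair. First, there is an internal tension in how you treat the cross term: you claim it produces both $-(F'(u_1)-F'(u_2),u)$ (which requires substituting the equation for $\mu$ into $-(u,\mu)$, and that substitution is also what generates the indispensable $-|\|u\||^2_{W^{\beta,2}_0}$ dissipation) and, simultaneously, a pairing $-(u,\bar\mu)$ to be bounded by $\delta\|\nabla\mu\|^2+C_\delta\|u\|^2$ (which forgoes the substitution). You must choose one branch: if you estimate $(u,\bar\mu)$ directly you never produce $-|\|u\||^2_{W^{\beta,2}_0}$ and the inequality $\tfrac{d}{dt}\mathcal{Y}+\kappa_2\mathcal{Y}\le\cdots$ cannot close, since $\mathcal{Y}$ contains $|\|u\||^2_{W^{\beta,2}_0}$; if you substitute, the argument closes but the $\|\nabla\mu\|^2$ term on the right of \eqref{pm-0} never arises (which is harmless, as the resulting estimate is stronger). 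Second, the uniform bound $\|u_i\|_{L^\infty(\Omega\times(0,T))}\le C(R_0)$ does not follow from Lemma \ref{t:diss-1} together with \eqref{beta-range}: the dissipative estimate controls only the $W^{\beta,2}_0$-norm, and the $W^{\beta+1,2}_0$-bound of Theorem \ref{t:st-solns} depends on higher norms of the (approximating) initial data, which are not uniform over $\mathcal{B}^M_{\beta,0}$; so the constant $C_{2\alpha}$ you obtain is not manifestly independent of $\phi_{01},\phi_{02}$. To be fair, the paper's own estimate \eqref{pm-0.4} leans on the same kind of unsupported embedding ($\|u\|_{L^\infty}\lesssim|\|u\||_{W^{\beta,2}_0}$), so this weakness is shared rather than introduced by you; but since you single out the $L^\infty$ bound as the essential new ingredient, you should either prove a higher-order dissipative estimate or restrict to $2\beta>N$ so that $W^{\beta,2}_0(\Omega)\hookrightarrow L^\infty(\Omega)$.
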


\begin{proof}
The proof is based on the proof of Proposition \ref{t:cont-dep}.
We begin by recovering \eqref{cp4} by multiplying \eqref{cp1} and \eqref{cp2} by $\mu$ and $\partial_tu$, respectively, in $L^2(\Omega)$, and multiplying \eqref{cp3} by $A^{-1}_N\eta^t$ in $\mathcal{M}_0$, then adding the obtained relations together to find
\begin{align}
\frac{1}{2}\frac{d}{dt}\{ |\|u\||^2_{W^{\beta,2}_0} + \|\eta^t\|^2_{\mathcal{M}_{-1}} \} + \alpha\|\partial_tu\|^2 - \int_0^\infty v'(s)\|\eta^t(s)\|^2_{H^{-1}}ds + (F'(u_1)-F'(u_2),\partial_tu) = 0.  \label{pm-0.3}
\end{align}
Recall $\phi_1=(u_1,\eta_1)$, $\phi_2=(u_2,\eta_2)$ are the unique weak solutions corresponding to the initial data $\phi_{01}$ and $\phi_{02}$, respectively; also, $u=u_1-u_2$ and $\eta^t=\eta^t_1-\eta^t_2$ formally satisfy \eqref{cp1}-\eqref{cp2}.
Applying assumption (K5) and the estimate based on (N4),
\begin{align}
|(F'(u_1)-F'(u_2),\partial_tu)| & \le \|F'(u_1)-F'(u_2)\|\|\partial_t u\| \notag \\
& \le C\|(1+|u_1|^{\rho-2}+|u_2|^{\rho-2})u\|\|\partial_tu\|  \notag \\ 
& \le C(1+\|u_1\|^{\rho-2}_{L^{2(\rho-2)}}+\|u_2\|^{\rho-2}_{L^{2(\rho-2)}})\|u\|_{L^\infty}\|\partial_tu\|  \notag \\ 
& \le Q_\alpha(\|(u_{0i},\eta_{0i})\|_{\mathcal{H}^M_{\beta,0}})|\|u\||^2_{W^{\beta,2}_0}+\frac{\alpha}{2}\|\partial_tu\|^2 \\
& \le Q_\alpha(\|(u_{0i},\eta_{0i})\|_{\mathcal{H}^M_{\beta,0}})+\frac{\alpha}{2}\|\partial_tu\|^2,  \label{pm-0.4}
\end{align}
where the positive monotone increasing function $Q_\alpha(\cdot)\sim\alpha^{-1},$ we find the differential inequality
\begin{align}
\frac{1}{2}\frac{d}{dt}\{ |\|u\||^2_{W^{\beta,2}_0} + \|\eta^t\|^2_{\mathcal{M}_{-1}} \} + \frac{\alpha}{2}\|\partial_tu\|^2 + \lambda \|\eta^t\|^2_{\mathcal{M}_{-1}} \le Q_\alpha(\|(u_{0i},\eta_{0i})\|_{\mathcal{H}^M_{\beta,0}}).  \label{pm-0.5}
\end{align}
In addition, we now multiply \eqref{cp2} by $u$ in $L^2(\Omega)$ to obtain
\begin{align}
|\|u\||^2_{W^{\beta,2}_0}+(F'(u_1)-F'(u_2),u)+\frac{\alpha}{2}\frac{d}{dt}\|u\|^2 = (\mu,u).  \label{pm-1}
\end{align}
Estimating the first product above using (N1) yields 
\begin{align}
(F'(u_1)-F'(u_2),u) & \ge -c_F\|u\|^2.  \label{pm-2}
\end{align}
We also estimate with Young's inequality 
\begin{align}
(\mu,u) & \le \frac{1}{2}\|\mu\|^2 + \frac{1}{2}\|u\|^2.  \label{pm-3}
\end{align}
Combining \eqref{pm-0.5}-\eqref{pm-3} yields 
\begin{align}
\frac{1}{2}\frac{d}{dt} & \left\{ |\|u\||^2_{W^{\beta,2}_0} + \|\eta^t\|^2_{\mathcal{M}_{-1}}+ \frac{\alpha}{2}\|u\|^2 \right\} + \frac{\alpha}{2}\|\partial_tu\|^2 + |\|u\||^2_{W^{\beta,2}_0} + \lambda \|\eta^t\|^2_{\mathcal{M}_{-1}} \notag \\ 
& \le \frac{1}{2}\|\mu\|^2 + Q_\alpha(\|(u_{0i},\eta_{0i})\|_{\mathcal{H}^M_{\beta,0}})|\|u\||^2_{W^{\beta,2}_0}.  \label{pm-4}
\end{align}
Then adding $\frac{\alpha}{2}\|u\|^2$ to each side of \eqref{pm-4} we find, 
\begin{align}
\frac{d}{dt} \mathcal{N} + c\mathcal{N} + \alpha\|\partial_tu\|^2 \le \|\mu\|^2 + Q_\alpha(\|(u_{0i},\eta_{0i})\|_{\mathcal{H}^M_{\beta,0}}),  \label{pm-5}
\end{align}
where $c=\min\{2,2\lambda,\alpha\}$ and 
\begin{align}
\mathcal{N}(t):=|\|u(t)\||^2_{W^{\beta,2}_0} + \|\eta^t\|^2_{\mathcal{M}_{-1}} +  \frac{\alpha}{2}\|u(t)\|^2.
\end{align}
Applying Gr\"{o}nwall's inequality to \eqref{pm-5} after omitting the term $\alpha\|\partial_tu\|^2$, we obtain the claim \eqref{pm-0}.
\end{proof}

Consequently, we deduce the following precompactness result for the semigroup $\mathcal{S}$.

\begin{proposition}  \label{t:pseudometric}
Let the assumptions of Lemma \ref{t:diff-est} hold. 
There is $t_*>0$ such that the operator $\mathcal{S}(t_*)$ is a strict contraction up to the precompact pseudometric on $\mathcal{B}^M_{\beta,0}$, in the sense of \eqref{pseudo-m}, where
\begin{align}
M_*(\phi_{01},\phi_{02}) & := 
C_{2\alpha} \left( \int_0^{t_*} \left(\|\nabla\mu_1(\tau)-\nabla\mu_2(\tau)\|^2+\|u_1(\tau)-u_2(\tau)\|^2 \right) d\tau \right)^{1/2},  \label{pseudo-2}
\end{align}
with $C_\alpha\sim\alpha^{-1}$.
Furthermore, $\mathcal{S}$ is precompact on $\mathcal{B}^M_{\beta,0}$.
\end{proposition}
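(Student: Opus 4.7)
The plan is to derive the $\alpha$-contraction inequality directly from Lemma~\ref{t:diff-est} and then verify that $M_*$ is a precompact pseudometric via Aubin--Lions compactness together with the smoothing effect of the viscous term. First, choose $t_*>0$ large enough that $C_1 e^{-\kappa_2 t_*}\le 1/4$, and set $\kappa:=1/2$. Taking square roots in \eqref{pm-0} and applying the subadditivity $\sqrt{a+b}\le\sqrt{a}+\sqrt{b}$ yields, for every $\phi_{01},\phi_{02}\in\mathcal{B}^M_{\beta,0}$,
\[
\|\mathcal{S}(t_*)\phi_{01}-\mathcal{S}(t_*)\phi_{02}\|_{\mathcal{H}^M_{\beta,0}} \le \kappa\|\phi_{01}-\phi_{02}\|_{\mathcal{H}^M_{\beta,0}} + M_*(\phi_{01},\phi_{02}),
\]
which is precisely the claimed contraction \eqref{pseudo-m}.

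To verify that $M_*$ is precompact on $\mathcal{B}^M_{\beta,0}$, fix a sequence $\{\phi_{0n}\}\subset\mathcal{B}^M_{\beta,0}$ and denote the corresponding trajectories by $(u_n,\eta_n^t)=\mathcal{S}(\cdot)\phi_{0n}$. For the $u$-part, Lemma~\ref{t:diss-1} together with \eqref{eq31} provides uniform bounds on $u_n$ in $L^\infty(0,t_*;W^{\beta,2}_0(\Omega))$ and on $\partial_t u_n$ in $L^\infty(0,t_*;H^{-1}(\Omega))$. By the Aubin--Lions Lemma (using the compact embedding $W^{\beta,2}_0(\Omega)\hookrightarrow L^2(\Omega)$), a subsequence satisfies $u_n\to u$ strongly in $L^2(0,t_*;L^2(\Omega))$, so $\int_0^{t_*}\|u_n-u_m\|^2\,d\tau\to 0$. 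The delicate part is the $\nabla\mu_n$ term, since weak solutions only provide $\mu\in L^2(0,t_*;W^{-\beta,2}(\Omega))$. The idea is to exploit the instantaneous regularization afforded by the viscous term $\alpha\partial_t u$: for any $\delta>0$, a variant of the argument leading to \eqref{app15} yields a uniform bound $\mu_n\in L^\infty(\delta,t_*;H^1(\Omega))$ for trajectories starting in $\mathcal{B}^M_{\beta,0}$. Coupled with a uniform estimate on $\partial_t\mu_n$ in a weaker space (obtained by differentiating \eqref{var-2} in $t$ and invoking the regularity of $\partial_t u_n$ and $\partial_{tt} u_n$ from Theorem~\ref{t:st-solns}), a second Aubin--Lions application extracts a further subsequence with $\nabla\mu_n$ Cauchy in $L^2(\delta,t_*;L^2(\Omega))$. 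The initial-layer contribution on $(0,\delta)$ is controlled by $O(\delta)$ uniformly in $n$ thanks to the absorbing-set bounds, so a diagonal extraction over $\delta\downarrow 0$ finishes the proof that $M_*(\phi_{0n},\phi_{0m})\to 0$ along a subsequence.

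With the $\alpha$-contraction \eqref{pseudo-m} in hand and $M_*$ precompact, precompactness of $\mathcal{S}$ on $\mathcal{B}^M_{\beta,0}$ follows from the standard Chueshov--Lasiecka abstract criterion applied to the discrete semigroup $\{\mathcal{S}(kt_*)\}_{k\in\mathbb{N}}$: iterating the contraction, the geometric factor $\kappa^k$ vanishes while the accumulated pseudometric contributions remain precompact, so every sequence $\{\mathcal{S}(kt_*)\phi_{0n}\}$ admits a Cauchy subsequence in $\mathcal{H}^M_{\beta,0}$. The main obstacle is precisely the compactness of $\nabla\mu_n$, which requires transferring the $H^1$-estimate for $\mu$ from the strong-initial-data setting of Theorem~\ref{t:st-solns} to general trajectories in the absorbing set after an arbitrarily short time; everything else in the argument is a routine combination of dissipativity, Aubin--Lions, and an abstract $\alpha$-contraction lemma.
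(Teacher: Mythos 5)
Your contraction step is exactly the paper's: fix $t_*$ so that $C_1e^{-\kappa_2 t_*}<1$, take square roots in \eqref{pm-0} from Lemma \ref{t:diff-est}, and read off \eqref{pseudo-m} with $M_*$ as in \eqref{pseudo-2}; likewise your final passage from the $\alpha$-contraction to precompactness of $\mathcal{S}$ is the content of the abstract propositions collected in Appendix \ref{s:append-a}. Where you genuinely depart from the paper is the verification that $M_*$ is precompact. The paper disposes of this in a single sentence by invoking the Aubin--Lions embedding \eqref{compact-u}, which only delivers strong $L^2(\Omega\times(0,t_*))$ convergence of the $u$-components and says nothing about the $\|\nabla\mu_1-\nabla\mu_2\|$ term; you correctly isolate that term as the delicate point (for weak solutions one only has \eqref{wk-reg-3}, so even the finiteness of $M_*$ leans on the $H^1$-regularity of $\mu$ from Theorem \ref{t:st-solns}) and you try to recover its compactness by an instantaneous-smoothing argument. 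That is a more honest treatment than the one-line citation in the paper.

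There is, however, a concrete gap in your patch: the claim that the initial-layer contribution $\int_0^\delta\|\nabla\mu_n(\tau)-\nabla\mu_m(\tau)\|^2\,d\tau$ is $O(\delta)$ uniformly in $n,m$ ``thanks to the absorbing-set bounds.'' The bounds of Lemma \ref{t:diss-1} and \eqref{unif-bnd} control $\|\phi(t)\|_{\mathcal{H}^M_{\beta,0}}$ and $\int\alpha\|\partial_tu\|^2$, not $\|\nabla\mu(t)\|$; the pointwise-in-time bound \eqref{app15} is derived in Theorem \ref{t:st-solns} only for strong initial data ($u_0\in W^{\beta+1,2}_0(\Omega)$, $\eta_0\in D(T_r)$), and your own smoothing step produces it only for $t\ge\delta$. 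A uniform $L^2(0,t_*;H^1)$ bound on $\mu_n$, even if available, would not make the tail integrals over $(0,\delta)$ uniformly small --- that requires equi-integrability --- so the diagonal extraction over $\delta\downarrow 0$ does not close as written. The cleaner repair is to avoid the initial layer altogether: fix one $\delta>0$, restrict attention to trajectories emanating from $\mathcal{S}(\delta)\mathcal{B}^M_{\beta,0}$ (equivalently, shift the integration interval in \eqref{pseudo-2} away from $t=0$ and enlarge $t_*$), where the uniform $H^1$ bound on $\mu$ and your $\partial_t\mu$ estimate hold, and then apply Aubin--Lions once. With that modification your argument is sound and in fact supplies a justification that the paper's proof leaves implicit.
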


\begin{proof}
Naturally we follow from the conclusion of Lemma \ref{t:diff-est}.
Clearly there is a $t_*>0$ so that $C_1e^{-\kappa_2t_*/2}<1.$
Thus, the operator $\mathcal{S}(t_*)$ is a strict contraction up to the pseudometric $M_*$ defined by \eqref{pseudo-2}.
The pseudometric $M_*$ is precompact thanks to the Aubin-Lions compact embedding \eqref{compact-u}.
This completes the proof.
\end{proof}

\begin{proof}[Proof of Theorem \ref{t:global}]
The precompactness of the solution operators $\mathcal{S}$ follows via the method of precompact pseudometrics (see Proposition \ref{t:pseudometric}).
With the existence of a bounded absorbing set $\mathcal{B}^M_{\beta,0}$ in $\mathcal{H}^M_{\beta,0}$ (Lemma \ref{t:diss-1}), the existence of a global attractor in $\mathcal{H}^M_{\beta,0}$ is well-known and can be found in \cite{Babin&Vishik92,Temam88} for example.
Additional characteristics of the attractor follow thanks to the gradient structure of Problem {\textbf P} (Remark \ref{r:gradient}).
In particular, the first three claims in the statement of Theorem \ref{t:global} are a direct result of the existence of an absorbing set, a Lyapunov functional $\mathcal{E}$, and the fact that the system $(\mathcal{X}^M_{\beta,0},\mathcal{S}(t),\mathcal{E})$ is gradient. 
The fourth property is a direct result of \cite[Theorem VII.4.1]{Temam88}, and the fifth follows from \cite[Theorem 6.3.2]{Zheng04}.
This concludes the proof.
\end{proof}

\appendix
\section{} \label{s:append-a}

The following is reported from \cite[Theorem 2.5]{Gal-Warma-15-1}.

\begin{theorem}  \label{t:append-1}
Let $0<\beta<1$.
For $K\in\{E,D\},$ the following assertions hold.
\begin{enumerate}
\item[(a)] The operator $-A_{K,\beta}$ generates a submarkovian semigroup $(e^{-A_{K,\beta}})_{t\ge0}$ on $L^2(\Omega)$ and hence can be extended to a strongly continuous contraction semigroup on $L^p(\Omega)$ for every $p\in[1,\infty)$, and to a contraction semigroup on $L^\infty(\Omega).$
\item[(b)] The operator $A_{K,\beta}$ has a compact resolvent, and hence has a discrete spectrum.
The spectrum of $A_{K,\beta}$ may be ordered as an increasing sequence of real numbers $0\le\lambda_1<\lambda_2<\cdots<\lambda_k<\cdots$ that diverges to $+\infty.$
Moreover, $0$ is not an eigenvalue for $A_{K,\beta}$, and if $\phi_k$ is an eigenfunction associated with the eigenvalue $\lambda_k$, then $\phi_k\in D(A_{K,\beta})\cap L^\infty(\Omega)$.
\item[(c)] Denoting the generator of the semigroup on $L^p(\Omega)$ by $A_{K,p}$ so that $A_K=A_{K,2}$, then the spectrum of $A_{K,p}$ is independent of $p$ for every $p\in[1,\infty]$.
\item[(d)] There holds $D(A_{K,\beta})\subset L^\infty(\Omega)$ provided that $N<4\beta$.
Let $p\in(2,\infty)$ and assume that $N<4\beta p/(p-2)$.
Then also $D(A_{K,\beta})\subset L^p(\Omega).$
\end{enumerate}
\end{theorem}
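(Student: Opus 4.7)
The result is stated as ``reported from'' \cite{Gal-Warma-15-1}, so the plan is essentially to indicate how each of the four assertions follows from standard tools once the two bilinear forms $a_{E,\beta}$ and $a_{D,\beta}$ have been set up as in \eqref{form-1} and \eqref{form-2}.

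For \textbf{(a)} the plan is to verify that $a_{K,\beta}$ (for $K \in \{E,D\}$) is a symmetric closed Dirichlet form on $L^2(\Omega)$ and then invoke the Beurling--Deny criteria. Closedness and nonnegativity are immediate from \eqref{equiv-norm} and the fact that $V_\Omega \ge 0$; for the Dirichlet (Markov) property, the key observation is that if $T: \mathbb{R} \to \mathbb{R}$ is any normal contraction with $T(0)=0$, the pointwise estimate $|T(r)-T(s)| \le |r-s|$ plugged into the integrands of \eqref{form-1}--\eqref{form-2} gives $a_{K,\beta}(T(u),T(u)) \le a_{K,\beta}(u,u)$, and similarly for the potential term in the $E$ case since $|T(u)|\le |u|$. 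Standard Dirichlet form theory (Fukushima, Ma--R\"ockner) then produces a submarkovian semigroup on $L^2$ which extends to a $C_0$-semigroup of contractions on $L^p(\Omega)$ for $p\in[1,\infty)$ and to a contraction semigroup on $L^\infty(\Omega)$.

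For \textbf{(b)}, the plan is to first establish compactness of the embedding $W_0^{\beta,2}(\Omega) \hookrightarrow L^2(\Omega)$, which is the fractional Rellich--Kondrachov theorem valid on any bounded domain; then $A_{K,\beta}$ is self-adjoint, nonnegative and its resolvent factors through this compact embedding, so it is compact. The spectral theorem gives a discrete spectrum $0 \le \lambda_1 \le \lambda_2 \le \cdots \to +\infty$. To rule out $\lambda_1 = 0$, I would use that $a_{K,\beta}(u,u)=0$ together with the nonlocal structure and the extension-by-zero encoded in $W_0^{\beta,2}(\Omega)$ forces $u \equiv 0$ (in the $E$ case via the $\mathbb{R}^N$-form \eqref{equiv-norm} and the fact that $u$ is zero on a set of positive measure outside $\Omega$; in the $D$ case via the strictly positive potential arising after one step of a Poincar\'e-type inequality on $W_0^{\beta,2}(\Omega)$). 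Boundedness of eigenfunctions $\phi_k\in L^\infty(\Omega)$ is then obtained by a Moser/De Giorgi iteration on the eigenvalue equation, or more cleanly by the ultracontractivity in the next step: $\phi_k = e^{\lambda_k} e^{-A_{K,\beta}}\phi_k \in L^\infty$ since $e^{-A_{K,\beta}}$ maps $L^2$ into $L^\infty$.

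For \textbf{(c)} the plan is to establish an ultracontractive estimate $\|e^{-tA_{K,\beta}}\|_{L^1 \to L^\infty} \le C t^{-N/(2\beta)}$ for small $t$, coming from the Nash-type inequality implied by the fractional Gagliardo--Nirenberg embedding $W^{\beta,2} \hookrightarrow L^{2N/(N-2\beta)}$. Given ultracontractivity, $p$-independence of the spectrum of the generator is a theorem of Davies (see also Arendt), which I would invoke directly. For \textbf{(d)}, the plan is purely Sobolev: if $u\in D(A_{K,\beta})$ then $A_{K,\beta}u\in L^2(\Omega)$ and elliptic regularity for the fractional Laplacian yields $u\in W^{2\beta,2}_0(\Omega)$. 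The Sobolev embedding $W^{2\beta,2}(\Omega)\hookrightarrow L^\infty(\Omega)$ when $2\beta>N/2$ gives the first inclusion, and $W^{2\beta,2}\hookrightarrow L^p$ when $1/p \ge 1/2 - 2\beta/N$ (which rearranges to $N<4\beta p/(p-2)$) gives the second. The main obstacle in the whole program is the $D$ case of item (b) for $\beta \le 1/2$, where $W_0^{\beta,2}(\Omega)=W^{\beta,2}(\Omega)$ and constants naively lie in the form domain; here one must argue that the symmetric form $a_{D,\beta}$ coupled with the realization \eqref{rop-2} of $A^\beta_\Omega$ still rules out $0$ as an eigenvalue via the nonlocal integration-by-parts identity underlying \eqref{rop-1}--\eqref{rop-2}, as carried out in the cited \cite[Proposition 2.3]{Gal-Warma-15-1}.
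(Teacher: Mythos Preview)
The paper itself gives no proof of this theorem: it is simply quoted as \cite[Theorem 2.5]{Gal-Warma-15-1} in the appendix, so there is no in-paper argument to compare against. Your sketch is, in broad strokes, the right architecture for the proof that appears in the cited reference: Beurling--Deny for (a), fractional Rellich--Kondrachov plus self-adjointness for (b), and ultracontractivity plus Davies' $p$-independence theorem for (c).

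There is, however, a genuine issue with your plan for (d). You claim that $u\in D(A_{K,\beta})$ with $A_{K,\beta}u\in L^2(\Omega)$ implies $u\in W^{2\beta,2}_0(\Omega)$ by ``elliptic regularity for the fractional Laplacian,'' and then conclude via Sobolev. This regularity claim is generally false on bounded domains with exterior/boundary Dirichlet data: solutions of $(-\Delta)^\beta u = f$ behave like ${\rm dist}(x,\partial\Omega)^\beta$ near $\partial\Omega$, so global $W^{2\beta,2}$-regularity fails for $\beta\ge 1/2$. The correct route (and the one actually used in \cite{Gal-Warma-15-1}) is to stay with the ultracontractive bound you already established for (c), namely $\|e^{-tA_{K,\beta}}\|_{L^2\to L^\infty}\le Ct^{-N/(4\beta)}$, and write $u = e^{-A_{K,\beta}}u + \int_0^1 e^{-sA_{K,\beta}}A_{K,\beta}u\,ds$; the integral converges in $L^\infty$ precisely when $N/(4\beta)<1$, and the $L^p$ statement follows analogously from the $L^2\to L^p$ smoothing rate. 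Replacing your Sobolev step by this semigroup argument closes the gap.
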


\begin{remark}  \label{r:compact}
From \cite[page 1284, after equation (2.3)]{Gal-Warma-15-1}), we know the following embedding is {\em{compact}}
\begin{align}
W^{\beta,2}_0(\Omega)\hookrightarrow L^p(\Omega) \quad \text{when} \quad 1\le p<\star \quad \text{for} \quad \star=\left\{ \begin{array}{ll} \displaystyle\frac{2N}{N-2\beta} & \text{if} \quad N>2\beta \\ +\infty & \text{if} \quad N=2\beta. \end{array} \right.  \label{compact-1.1}
\end{align}
Also,
\begin{align}
W_0^{\beta,2}(\Omega)\hookrightarrow C^{0,h}({\overline{\Omega}}) \quad & \text{with} \quad h:=\beta-\frac{N}{2} \quad \text{if} \quad N<2\beta \quad \text{and} \quad 2<p<\infty.  \notag
\end{align}
\end{remark}

The following result is the classical Aubin-Lions Lemma, reported here for the reader's convenience (cf. \cite{Lions69}, and, e.g. \cite[Lemma 5.51]{Tanabe79} or \cite[Theorem 3.1.1]{Zheng04}). 

\begin{lemma}  \label{t:Lions}
Let $X,Y,Z$ be Banach spaces where $Z\hookleftarrow Y\hookleftarrow X$ with continuous injections, the second being compact.
Then the following embeddings are compact:
\[
W:=\{ \chi\in L^2(0,T;X), \ \partial_t\chi\in L^2(0,T;Z) \} \hookrightarrow L^2(0,T;Y),
\]
and
\[
W':=\{ \chi\in L^\infty(0,T;X), \ \partial_t\chi\in L^2(0,T;Z) \} \hookrightarrow C([0,T];Y).
\]
\end{lemma}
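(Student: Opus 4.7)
The statement is the classical Aubin-Lions compactness lemma, whose proof proceeds in three coordinated stages: an interpolation inequality (Ehrling's lemma), a time-translation estimate derived from the fundamental theorem of calculus, and a standard compactness criterion (Fr\'echet-Kolmogorov for the first embedding; Arzel\`a-Ascoli for the second). The plan is to establish the key inequalities and then invoke these abstract criteria.

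First I would prove Ehrling's inequality: for every $\varepsilon>0$ there exists $C_\varepsilon>0$ such that $\|u\|_Y \le \varepsilon\|u\|_X + C_\varepsilon\|u\|_Z$ for all $u\in X$. The proof is by contradiction; the compactness of $X\hookrightarrow Y$ together with continuity of $Y\hookrightarrow Z$ forces a putative counterexample, normalized so that $\|u_n\|_Y=1$, to have a $Y$-convergent subsequence whose limit $u$ satisfies both $\|u\|_Y=1$ and $\|u\|_Z=0$, contradicting continuity.

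For the first embedding, let $\{\chi_n\}$ be bounded in $W$. Integrating Ehrling in time yields
\[
\|\chi_n-\chi_m\|_{L^2(0,T;Y)}^2 \le 2\varepsilon^2\|\chi_n-\chi_m\|_{L^2(0,T;X)}^2 + 2C_\varepsilon^2\|\chi_n-\chi_m\|_{L^2(0,T;Z)}^2.
\]
The first term is bounded uniformly by $C\varepsilon^2$, so it suffices to extract a subsequence that is Cauchy in $L^2(0,T;Z)$, after which sending $\varepsilon\to 0$ completes the argument. This is supplied by the Fr\'echet-Kolmogorov criterion, whose crucial hypothesis is the uniform time-translation estimate
\[
\int_0^{T-h}\|\chi_n(t+h)-\chi_n(t)\|_Z^2\,dt \le h^2\|\partial_t\chi_n\|_{L^2(0,T;Z)}^2,
\]
which follows from $\chi_n(t+h)-\chi_n(t)=\int_t^{t+h}\partial_s\chi_n(s)\,ds$ together with Cauchy-Schwarz. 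A diagonal extraction then produces the required subsequence.

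For the second embedding $W'\hookrightarrow C([0,T];Y)$, I would instead use Arzel\`a-Ascoli on $C([0,T];Y)$. Elements of $W'$ first inherit a continuous $Z$-valued representative from the same FTC bound; equicontinuity in $Y$ then follows from Ehrling combined with the $L^\infty(0,T;X)$ bound. Pointwise relative compactness at each $t$ is a direct consequence of boundedness of $\{\chi_n(t)\}$ in $X$ and the compactness of $X\hookrightarrow Y$. The main subtlety, and the step most delicate to verify, is the passage between integrated and pointwise-in-time statements needed for the $C([0,T];Y)$ conclusion: one must identify continuous representatives in $Z$ before invoking Ehrling to strengthen continuity into $Y$. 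As the result is classical, the full technical details can be left to the references \cite{Lions69,Tanabe79,Zheng04}.
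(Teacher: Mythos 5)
The paper does not prove this lemma at all: it is stated as the classical Aubin--Lions result, ``reported here for the reader's convenience,'' with the proof delegated entirely to the cited references \cite{Lions69,Tanabe79,Zheng04}. Your outline is the standard argument from those references (Ehrling's interpolation inequality, the time-translation bound from the fundamental theorem of calculus, then Fr\'echet--Kolmogorov for $L^2(0,T;Y)$ and Arzel\`a--Ascoli for $C([0,T];Y)$), and it is sound. Two small points are worth tightening if you were to write it out in full. First, in the vector-valued Fr\'echet--Kolmogorov criterion the translation estimate alone does not suffice when $Z$ is infinite-dimensional; you also need relative compactness in $Z$ of the sets of time-averages $\left\{ \int_{t_1}^{t_2}\chi_n(t)\,dt \right\}$, which here follows from the uniform $L^2(0,T;X)$ bound together with the compactness of $X\hookrightarrow Y\hookrightarrow Z$ --- the ingredient is available in your setup but should be invoked explicitly. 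Second, for the $C([0,T];Y)$ embedding, the pointwise bound $\|\chi_n(t)\|_X\le C$ for \emph{every} $t$ (not just a.e.\ $t$) requires passing to the continuous representative via Steklov averages and a weak/weak-$*$ limit argument in $X$; you correctly flag this as the delicate step. Neither issue changes the architecture of the proof, which matches what the cited sources do.
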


Here we recall the notion of $\alpha$-contraction and provide the main propositions which guarantee the existence of a global attractor for the semigroup of solution operators $\mathcal{S}(t).$

\begin{definition}
Let $X$ be a Banach space and $\alpha$ be a measure of compactness in $X$ (cf., e.g., \cite[Definition A.1]{Zheng&Milani05}). 
Let $B\subset X$.
A continuous map $T:B\rightarrow B$ is an $\alpha$-contraction on $B$, if there exists a number $q\in(0,1)$ such that for every subset $A\subset B$, $\alpha(T(A)) \le q\alpha(A).$
\end{definition}

\begin{proposition}
Assume that $B\subset X$ is closed and bounded, and that $T:B\rightarrow B$ is an $\alpha$-contraction on $B$. 
Define the semigroup generated by the iterations of $T$, i.e. $S:=(T^n)_{n\in\mathbb{N}}$. 
Then the set
\[
\omega(B):=\bigcap_{n\ge0}{\overline{\bigcup_{m\ge n}T^m(B)}}^X
\]
is compact, invariant, and attracts $B$.
\end{proposition}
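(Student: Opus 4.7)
The plan is to combine the iterative decay of the measure of noncompactness with a Kuratowski-type intersection argument. Since $T:B\to B$ with $B$ closed and $T(B)\subset B$, the forward iterates are nested: $T^{n+1}(B)=T(T^n(B))\subset T^n(B)$, so $\bigcup_{m\ge n}T^m(B)=T^n(B)$, and the sets in the intersection defining $\omega(B)$ simplify to
\[
B_n := \overline{\bigcup_{m\ge n}T^m(B)} = \overline{T^n(B)}.
\]
The standard properties of a measure of noncompactness---namely $\alpha(\overline{A})=\alpha(A)$ together with the hypothesized $\alpha$-contraction inequality $\alpha(T(A))\le q\alpha(A)$---then give iteratively $\alpha(B_n) = \alpha(T^n(B))\le q^n\alpha(B)\to 0$, since $\alpha(B)<\infty$ by boundedness of $B$.

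The key step is the classical Kuratowski intersection theorem (cf.\ \cite[Appendix A]{Zheng&Milani05}): a decreasing family of nonempty closed bounded subsets of a complete metric space with $\alpha(B_n)\to 0$ has nonempty compact intersection, and $B_n$ converges to this intersection in the Hausdorff sense. Applied to our $B_n$, this immediately yields that $\omega(B)=\bigcap_{n\ge 0}B_n$ is nonempty and compact, and that $\operatorname{dist}_X(T^n(B),\omega(B))\to 0$, which is precisely the asserted attraction property.

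It remains to establish invariance $T(\omega(B))=\omega(B)$. Positive invariance $T(\omega(B))\subset\omega(B)$ is immediate from continuity of $T$: for each $n$, $T(B_n)\subset\overline{T(T^n(B))}=\overline{T^{n+1}(B)}=B_{n+1}$, so $T(\omega(B))\subset\bigcap_n B_{n+1}=\omega(B)$. The reverse inclusion is obtained by a diagonal extraction: for $y\in\omega(B)\subset B_n$ choose $x_n\in T^{n-1}(B)$ with $T(x_n)\to y$; the relative compactness of $\{x_n\}$, which follows from $x_n\in B_{n-1}$ together with $\alpha(B_{n-1})\to 0$, produces a convergent subsequence $x_n\to x\in\omega(B)$, and continuity of $T$ gives $T(x)=y$. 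The main obstacle is really just the Kuratowski intersection step, which packages the delicate measure-of-noncompactness calculus; once that classical result is invoked, the remainder reduces to routine set-theoretic bookkeeping and continuity of $T$.
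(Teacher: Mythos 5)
Your proof is correct, and it is essentially the standard argument: reduce $\omega(B)$ to the decreasing sequence $B_n=\overline{T^n(B)}$ with $\alpha(B_n)\le q^n\alpha(B)\to 0$, invoke the Kuratowski intersection theorem for compactness and attraction, and obtain invariance from continuity plus a diagonal extraction using $\alpha(B_{n-1})\to 0$. The paper itself gives no proof of this proposition --- it quotes the result from \cite[Appendix A]{Zheng&Milani05} and \cite[Sections II.7]{Milani&Koksch05} --- and your argument is precisely the one found in those references, so there is nothing to reconcile; the only cosmetic slip is writing $x_n\to x$ where you mean convergence along the extracted subsequence.
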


\begin{proposition}
Assume that $S$ is a continuous semigroup of operators on $X$ admitting a bounded, positively invariant absorbing set $B$, and that there exists $t_*>0$ such that the operator $S_* := S(t_*)$ is an $\alpha$-contraction on $B$. 
Let
\[
A_*:=\bigcap_{n\ge0}{\overline{\bigcup_{m\ge n}S^m_*(B)}}^X=\omega_*(B)
\]
be the $\omega$-limit set of $B$ under the map $S_*$, and set 
\[
A:=\bigcup_{0\le t\le t_*}S(t)A_*.
\]
Assume further that for all $t\in [0,t_*]$, the map $x\rightarrow S(t)x$ is Lipschitz continuous from $B$ to $B$, with Lipschitz constant $L(t)$, $L:[0,t_*]\rightarrow(0,+\infty)$ being a bounded function. 
Then $A=\omega(B)$, and this set is the global attractor of $S$ in $B$.
\end{proposition}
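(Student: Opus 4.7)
The plan is to bootstrap the conclusions of the previous proposition (applied to the discrete iteration semigroup $\{S_*^n\}$ acting on $B$) to the continuous-time semigroup $\{S(t)\}_{t\ge0}$, using the representation $A=\bigcup_{0\le t\le t_*}S(t)A_*$ together with the semigroup law and the uniform Lipschitz bound $L_{\max}:=\sup_{t\in[0,t_*]}L(t)<\infty$. The previous proposition gives us three things for free about $A_*=\omega_*(B)$: it is compact, $S_*$-invariant (i.e.\ $S(t_*)A_*=A_*$), and attracts $B$ under the discrete iteration $S_*^n$.

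First, I would verify that $A$ is compact and $S$-invariant. Compactness: the joint map $\Phi:[0,t_*]\times A_*\to X$, $\Phi(t,x)=S(t)x$, is continuous (continuity in $x$ comes from the Lipschitz hypothesis; continuity in $t$ is the strong continuity of $S$), hence $A=\Phi([0,t_*]\times A_*)$ is the continuous image of a compact set, so it is compact. Invariance $S(t)A=A$ for every $t\ge0$: given $t\ge0$ and any $s\in[0,t_*]$, write $t+s=m\,t_*+r$ with $m\in\mathbb{N}$ and $r\in[0,t_*]$; then using $S_*^m A_*=A_*$, one has $S(t+s)A_*=S(r)S_*^m A_*=S(r)A_*\subset A$. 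Conversely, as $s$ sweeps $[0,t_*]$, the residue $r=r(s)$ also sweeps $[0,t_*]$, so $S(t)A=\bigcup_{s\in[0,t_*]}S(t+s)A_*/S(s)=A$ (the slight bookkeeping with $s$ vs.\ $r$ is straightforward).

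Next I would show $A$ attracts $B$ under $S$. Given $\varepsilon>0$, by the previous proposition choose $N$ so that $\mathrm{dist}(S_*^n B,A_*)<\varepsilon/L_{\max}$ for all $n\ge N$. For any $t\ge Nt_*$, write $t=nt_*+s$ with $n\ge N$ and $s\in[0,t_*)$; then
\begin{equation*}
\mathrm{dist}(S(t)B,A)\le \mathrm{dist}\bigl(S(s)S_*^n B,\,S(s)A_*\bigr)\le L(s)\,\mathrm{dist}(S_*^n B,A_*)<\varepsilon,
\end{equation*}
since $S(s)A_*\subset A$. This gives attraction of $B$. Combined with the first step, this is enough to conclude $A$ is the global attractor of $S$ in $B$ by the standard maximal-compact-invariant characterization.

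Finally, the identification $A=\omega(B)$ uses the sequential description of $\omega(B)$: $y\in\omega(B)$ iff $y=\lim_k S(\tau_k)x_k$ for some $x_k\in B$ and $\tau_k\to\infty$. Write $\tau_k=n_k t_*+s_k$ with $s_k\in[0,t_*]$; extracting a subsequence, $s_k\to s_\infty\in[0,t_*]$, and by the attraction $\mathrm{dist}(S_*^{n_k}x_k,A_*)\to0$ together with compactness of $A_*$, a further subsequence yields $S_*^{n_k}x_k\to z\in A_*$. By continuity of $\Phi$ the whole composition converges to $S(s_\infty)z\in A$, so $\omega(B)\subset A$. For the reverse inclusion, any $y=S(s)z\in A$ with $z\in A_*=\omega_*(B)$ can be written as $y=\lim_k S(s+n_k t_*)x_k$ where $S_*^{n_k}x_k\to z$, and $s+n_k t_*\to\infty$, so $y\in\omega(B)$.

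The main obstacle I anticipate is the bookkeeping in the invariance step $S(t)A=A$ (making the map $s\mapsto r(s)$ rigorous without double-counting the endpoints) and, for the identification $A=\omega(B)$, ensuring the diagonal subsequence extraction is compatible with the continuity modulus of $\Phi$ on the compact set $[0,t_*]\times A_*$; uniform continuity there makes this clean, but it is the one place where the Lipschitz hypothesis $L\in L^\infty(0,t_*)$ genuinely has to be used rather than just strong continuity of the semigroup.
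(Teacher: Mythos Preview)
The paper does not actually prove this proposition: it is stated in the appendix as a recalled result from the literature, with the remark immediately following it that these propositions ``are motivated by \cite[Sections II.2 and III.2]{Hale88}, but appear in the above form in \cite[Appendix A]{Zheng\&Milani05} and \cite[Sections II.7]{Milani\&Koksch05}.'' So there is no in-paper proof to compare against.

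That said, your argument is correct and is exactly the standard one found in those references: bootstrap the compactness, invariance, and attraction of $A_*$ under the discrete iterates $\{S_*^n\}$ (supplied by the preceding proposition) to the continuous semigroup via $A=\bigcup_{0\le t\le t_*}S(t)A_*$, using the uniform Lipschitz bound $L_{\max}$ to transfer the Hausdorff-distance estimate across the fractional time $s\in[0,t_*)$. The two technical points you flag are both minor. For invariance, writing $t=kt_*+\rho$ with $\rho\in[0,t_*)$, as $s$ sweeps $[0,t_*]$ the residue $r(s)$ of $t+s$ modulo $t_*$ covers $[0,t_*)$ exactly once (split at $s=t_*-\rho$), and the endpoint ambiguity is harmless because $S(t_*)A_*=A_*=S(0)A_*$. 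For $\omega(B)\subset A$, the joint continuity of $(t,x)\mapsto S(t)x$ on $[0,t_*]\times B$ that you need follows from the triangle inequality
\[
\|S(s_k)y_k-S(s_\infty)z\|\le L_{\max}\|y_k-z\|+\|S(s_k)z-S(s_\infty)z\|,
\]
with the only caveat that $z\in A_*\subset\overline{B}$, so if $B$ is not assumed closed the Lipschitz estimate must be extended to $\overline{B}$ by continuity, which is automatic.
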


Theorems 3.1 and 3.2 are motivated by \cite[Sections II.2 and III.2]{Hale88}, but appear in the above form in \cite[Appendix A]{Zheng&Milani05} and \cite[Sections II.7]{Milani&Koksch05}.
We also rely on the following.

\begin{definition}
A pseudometric $d$ in $X$ is precompact in $X$ if every bounded sequence has a subsequence which is a Cauchy sequence relative to $d$.
\end{definition}

\begin{proposition}
Let $B\subset X$ be bounded, let $d$ be a precompact pseudometric in $X$, and let $T : B \rightarrow B$ be a continuous map. 
Suppose $T$ satisfies the estimate
\[
\|Tx-Ty\|_X \le q \|x-y\|_X + d(x,y)
\]
for all $x,y\in B$ and some $q\in(0,1)$ independent of $x$ and $y$. 
Then $T$ is an $\alpha$-contraction.
\end{proposition}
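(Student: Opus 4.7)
The plan is to work directly from the definition of the Kuratowski measure of noncompactness $\alpha$ on $X$, namely $\alpha(A)=\inf\{\delta>0 : A=\bigcup_{i=1}^N A_i,\ \mathrm{diam}_X(A_i)\le\delta\}$, and show that $\alpha(T(A))\le q\,\alpha(A)$ for every $A\subset B$, which gives the $\alpha$-contraction property with the same constant $q\in(0,1)$.

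The key preliminary observation is that, because $B$ is bounded in $X$, the precompactness of $d$ forces $(B,d)$ to be totally bounded as a pseudometric space. I would prove this by contradiction: if for some $\eta>0$ no finite cover of $B$ by $d$-balls of radius $\eta$ exists, one extracts inductively a sequence $\{x_n\}\subset B$ with $d(x_n,x_m)>\eta$ whenever $n\ne m$. This bounded sequence admits no $d$-Cauchy subsequence, contradicting precompactness of $d$. Hence, for every $\eta>0$ there is a finite cover $B=\bigcup_{j=1}^{K_\eta} B_j$ with $\sup_{x,y\in B_j} d(x,y)\le\eta$.

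Now fix $A\subset B$ and $\varepsilon>0$. Choose a cover $A=\bigcup_{i=1}^{N} A_i$ with $\mathrm{diam}_X(A_i)\le\alpha(A)+\varepsilon$, and combine it with the $d$-cover above to get the refinement $A=\bigcup_{i,j}(A_i\cap B_j)$ into finitely many pieces. For any $x,y\in A_i\cap B_j$, the hypothesis on $T$ gives
\[
\|Tx-Ty\|_X\le q\|x-y\|_X+d(x,y)\le q(\alpha(A)+\varepsilon)+\eta,
\]
so $\mathrm{diam}_X(T(A_i\cap B_j))\le q(\alpha(A)+\varepsilon)+\eta$. Since $T(A)$ is covered by the finitely many sets $T(A_i\cap B_j)$, one concludes $\alpha(T(A))\le q(\alpha(A)+\varepsilon)+\eta$. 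Letting $\varepsilon\downarrow 0$ and $\eta\downarrow 0$ yields $\alpha(T(A))\le q\,\alpha(A)$, which is exactly the definition of an $\alpha$-contraction on $B$ with contraction constant $q$.

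There is no real obstacle here; the only step requiring a small argument is the extraction of the finite $d$-cover of $B$ from the precompactness hypothesis on $d$. The rest is a bookkeeping refinement of two finite covers — one $X$-norm-controlled (coming from the definition of $\alpha(A)$) and one $d$-controlled (coming from total boundedness in $(B,d)$) — so that the mixed estimate $\|Tx-Ty\|_X\le q\|x-y\|_X+d(x,y)$ can be applied piece-by-piece.
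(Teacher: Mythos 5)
Your argument is correct. Note first that the paper itself offers no proof of this proposition: it is quoted verbatim from the references \cite{Zheng&Milani05} and \cite{Milani&Koksch05} as part of the abstract machinery in Appendix A, so there is no internal proof to compare against. Your proof is the standard one for this fact and is complete: the reduction of the precompactness hypothesis on $d$ to total boundedness of $(B,d)$ (via the separated-sequence contradiction, which legitimately uses that $B$ is bounded in $X$ so that every sequence in $B$ is a bounded sequence) is exactly the step that needs an argument, and the refinement of the two finite covers $\{A_i\}$ and $\{B_j\}$ correctly converts the mixed estimate $\|Tx-Ty\|_X\le q\|x-y\|_X+d(x,y)$ into the diameter bound $q(\alpha(A)+\varepsilon)+\eta$ for the Kuratowski measure of noncompactness. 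The only cosmetic point is that total boundedness yields $d$-balls of radius $\eta$, hence pieces of $d$-diameter at most $2\eta$ rather than $\eta$; this is immaterial since $\eta\downarrow 0$ at the end. The conclusion $\alpha(T(A))\le q\,\alpha(A)$ with the same $q\in(0,1)$ is exactly the $\alpha$-contraction property as defined in the appendix.
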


\section*{Acknowledgements}

The authors are indebted to the generosity of Professor Ciprian G. Gal whose enthusiasm and insight into this project proved indispensable and enlightening.

\bibliographystyle{amsplain}

\begin{thebibliography}{10}

\bibitem{Adams-Hedberg-96}
David Adams and Lars~Inge Hedberg, \emph{Function spaces and potential theory},
  Grundlehren der Mathematischen Wissenschaften, vol. 314, Springer--Verlag,
  Berlin, 1996.

\bibitem{GA-GS-AS_2015}
Goro Akagi, Giulio Schimperna, and Antonio Segatti, \emph{Fractional
  {C}ahn--{H}illiard, {A}llen--{C}ahn and porous medium equations}, JDE
  \textbf{261} (2016), no.~6, 2935--2985.

\bibitem{AVMRTM10}
Fuensanta Andreu-Vaillo, Jos\'{e}~M. Maz\'{o}n, Julio~D. Rossi, and
  J.~Juli\'{a}n Toledo-Melero, \emph{Nonlocal diffusion problems}, Mathematical
  Surveys and Monographs, vol. 165, American Mathematical Society, Real
  Sociedad Matem\'{a}tica Espa\~{n}ola, 2010.

\bibitem{Babin&Vishik92}
A.~V. Babin and M.~I. Vishik, \emph{Attractors of evolution equations},
  North-Holland, Amsterdam, 1992.

\bibitem{Barbu76}
Viorel Barbu, \emph{Nonlinear semigroups and differential equations in {B}anach
  spaces}, Noordhoff International Publishing, Bucharest, 1976.

\bibitem{Bates&Han05}
Peter Bates and Jianlong Han, \emph{The {N}eumann boundary problem for a
  nonlocal {C}ahn--{H}illiard equation}, J. Differential Equations \textbf{212}
  (2005), no.~2, 235--277.

\bibitem{BBC-03}
Krzysztof Bogdan and Krzysztof Burdzy Zhen-Qing Chen, \emph{Censored stable
  processes}, Probab. Theory Related Fields \textbf{127} (2003), no.~1,
  89--152.

\bibitem{Bucur-Valdinoci-16}
Claudia Bucur and Enrico Valdinoci, \emph{Nonlocal diffusion and applications},
  Lecture Notes of the Unione Matematica Italiana, vol.~20, Springer, Bologna,
  2016.

\bibitem{CRS-10}
Luis~A. Caffarelli, Jean-Michel Roquejoffre, and Yannick Sire,
  \emph{Variational problems with free boundaries for the fractional
  laplacian}, J. Eur. Math. Soc. \textbf{12} (2010), 1151--1179.

\bibitem{Cahn-Hilliard}
J.~W. Cahn and J.~E. Hilliard, \emph{Free energy of a nonuniform system i.
  interfacial energy}, J. Chem. Phys. \textbf{28} (1958), 258--267.

\bibitem{CGG11}
Cecilia Cavaterra, Ciprian Gal, and Maurizio Grasselli,
  \emph{{C}ahn--{H}illiard equations with memory and dynamic boundary
  conditions}, Asymptot. Anal. \textbf{71} (2011), no.~3, 123--162.

\bibitem{CGW-14}
Cecilia Cavaterra, Maurizio Grasselli, and Hao Wu, \emph{Non-isothermal viscous
  {C}ahn--{H}illiard equation with inertial term and dynamic boundary
  conditions}, Commun. Pure Appl. Anal. \textbf{13} (2014), no.~5, 1855--1890.

\bibitem{CFG12}
Pierluigi Colli, Sergio Frigeri, and Maurizio Grasselli, \emph{Global existence
  of weak solutions to a nonlocal {C}ahn--{H}illiard--{N}avier--{S}tokes
  system}, J. Math. Anal. Appl. \textbf{386} (2012), no.~1, 428--444.

\bibitem{CKRS07}
Pierluigi Colli, Pavel Krej\v{c}\'{i}, Elisabetta Rocca, and J\"{u}rgen
  Sprekels, \emph{Nonlinear evolution inclusions arising from phase change
  models}, Czechoslovak Math. J. \textbf{57} (2007), no.~132, 1067--1098.

\bibitem{Conti-Zelati10}
M.~Conti and M.~Coti Zelati, \emph{Attractors for the non-viscous
  {C}ahn--{H}illiard equation with memory in 2{D}}, Nonlinear Anal. \textbf{72}
  (2010), no.~1668--1682.

\bibitem{Conti-Mola-08}
Monica Conti and Gianluca Mola, \emph{3-{D} viscous {C}ahn--{H}illiard equation
  with memory}, Math. Models Methods Appl. Sci. \textbf{32} (2008), no.~11,
  1370--1395.

\bibitem{CPS06}
Monica Conti, Vittorino Pata, and Marco Squassina, \emph{Singular limit of
  differential systems with memory}, Indiana Univ. Math. J. \textbf{55} (2007),
  no.~1, 169--215.

\bibitem{Frigeri&Grasselli12}
Sergio Frigeri and Maurizio Grasselli, \emph{Global and trajectory attractors
  for a nonlocal {C}ahn--{H}illiard--{N}avier--{S}tokes system}, J. Dynam.
  Differential Equations \textbf{24} (2012), no.~4, 827--856.

\bibitem{Frigeri-Grasselli-Rocca_2014}
Sergio Frigeri, Maurizio Grasselli, and Elisabetta Rocca, \emph{A diffuse
  interface model for two-phase incompressible flows with non-local
  interactions and non-constant mobility}, Nonlinearity \textbf{28} (2015),
  no.~5, 1257--1293.

\bibitem{Gal-16-doubly}
Ciprian~G. Gal, \emph{Doubly nonlocal {C}ahn--{H}illiard equations}, Annales de
  l'Institut Henri Poincar\'{e} (C) Analyse Non Lin\'{e}aire (2016).

\bibitem{Gal-NCHEFDBC-16}
\bysame, \emph{Non-local {C}ahn--{H}illiard equations with fractional dynamic
  boundary conditions}, Euro. Jnl of Applied Mathematics (2016), 53 pp.

\bibitem{Gal-17-str-str-CHE}
\bysame, \emph{On the strong-to-strong interaction case for doubly nonlocal
  {C}ahn--{H}illiard equations}, Discrete Contin. Dyn. Syst. \textbf{37}
  (2017), no.~1, 131--167.

\bibitem{Gal&Grasselli12}
Ciprian~G. Gal and M.~Grasselli, \emph{Singular limit of viscous
  {C}ahn--{H}illiard equations with memory and dynamic boundary conditions},
  DCDS-B \textbf{18} (2013), no.~6, 1581--1610.

\bibitem{Gal&Grasselli14}
Ciprian~G. Gal and Murizio Grasselli, \emph{Longtime behavior of nonlocal
  {C}ahn--{H}illiard equations}, Discrete Contin. Dyn. Syst. \textbf{34}
  (2014), no.~1, 145--179.

\bibitem{Gal&Miranville09}
Ciprian~G. Gal and Alain Miranville, \emph{Uniform global attractors for
  non-isothermal viscous and non-viscous {C}ahn--{H}illiard equations with
  dynamic boundary conditions}, Nonlinear Anal. Real World Appl. \textbf{10}
  (2009), no.~3, 1738--1766.

\bibitem{Gal-Shomberg15-2}
Ciprian~G. Gal and Joseph~L. Shomberg, \emph{Coleman-{G}urtin type equations
  with dynamic boundary conditions}, Phys. D \textbf{292/293} (2015), 29--45.

\bibitem{Gal-Warma-15-1}
Ciprian~G. Gal and Mahamadi Warma, \emph{Reaction-diffusion equations with
  fractional diffusion on non-smooth domains with various boundary conditions},
  Discrete Contin. Dyn. Syst. \textbf{36} (2016), no.~3, 1279--1319.

\bibitem{GGMP05-mem}
S.~Gatti, M.~Grasselli, A.~Miranville, and V.~Pata, \emph{Memory relaxation of
  first order evolution equations}, Nonlinearity \textbf{18} (2005), no.~4,
  1859--1883.

\bibitem{GGPS05}
S.~Gatti, M.~Grasselli, V.~Pata, and M.~Squassina, \emph{Robust exponential
  attractors for a family of nonconserved phase-field systems with memory},
  Discrete Contin. Dyn. Syst. \textbf{12} (2005), no.~5, 1019--1029.

\bibitem{GMPZ10}
S.~Gatti, A.~Miranville, V.~Pata, and S.~Zelik, \emph{Continuous families of
  exponential attractors for singularly perturbed equations with memory}, Proc.
  Roy. Soc. Edinburgh Sect. A \textbf{140} (2010), 329--366.

\bibitem{Giacomin-Lebowitz-97}
Giambattista Giacomin and Joel~L. Lebowitz, \emph{Phase segregation dynamics in
  particle systems with long range interactions. i. macroscopic limits}, J.
  Statist. Phys. \textbf{87} (1997), no.~1--2, 37--61.

\bibitem{GGP01}
C.~Giorgi, M.~Grasselli, and V.~Pata, \emph{Well-posedness and longtime
  behavior of the phase-field model with memory in a history space setting},
  Quart. Appl. Math. \textbf{59} (2001), 701--736.

\bibitem{GPM98}
Claudio Giorgi, Vittorino Pata, and Alfredo Marzocchi, \emph{Asymptotic
  behavior of a semilinear problem in heat conduction with memory}, NoDEA
  Nonlinear Differential Equations Appl. \textbf{5} (1998), no.~3, 333--354.

\bibitem{GPM00}
\bysame, \emph{Uniform attractors for a non-autonomous semilinear heat equation
  with memory}, Quart. Appl. Math. \textbf{58} (2000), no.~4, 661--683.

\bibitem{Grasselli08}
M.~Grasselli, \emph{On the large time behavior of a phase-field system with
  memory}, Asymptot. Anal. \textbf{56} (2008), 229--249.

\bibitem{GSZ09}
M.~Grasselli, G.~Schimperna, and S.~Zelik, \emph{On the 2{D} {C}ahn--{H}illiard
  equation with inertial term}, Comm. Partial Differential Equations
  \textbf{34} (2009), 707--737.

\bibitem{Grasselli-2012}
Maurizio Grasselli, \emph{Finite-dimensional global attractor for a nonlocal
  phase-field system}, Istituto Lombardo (Rend. Scienze) Mathematica
  \textbf{146} (2012), 113--132.

\bibitem{Grasselli&Pata02-2}
Maurizio Grasselli and Vittorino Pata, \emph{Uniform attractors of
  nonautonomous dynamical systems with memory}, Evolution equations, semigroups
  and functional analysis, vol.~50, Birkh\"{a}user, Boston, MA, 2002.

\bibitem{Grasselli&Pata05}
\bysame, \emph{Robust exponential attractors for a phase-field system with
  memory}, J. Evol. Equ. \textbf{5} (2005), no.~4, 465--483.

\bibitem{GPV03}
Maurizio Grasselli, Vittorino Pata, and Federico Vegni, \emph{Longterm dynamics
  of a conserved phase-field system with memory}, Asymptot. Anal. \textbf{33}
  (2003), no.~3-4, 261--320.

\bibitem{GPS07}
Maurizio Grasselli, Hana Petzeltov\'{a}, and Giulio Schimperna,
  \emph{Asymptotic behavior of a nonisothermal viscous cahn-hilliard equation
  with inertial term}, J. Differential Equations \textbf{239} (2007), no.~1,
  38--60.

\bibitem{GSSZ-09}
Maurizio Grasselli, Giulio Schimperna, Antonio Segatti, and Sergey Zelik,
  \emph{On the {3D} {C}ahn--{H}illiard equation with inertial term}, J. Evol.
  Equ. \textbf{9} (2009), 371--404.

\bibitem{Grasselli-Schimperna-2011}
Murizio Grasselli and Giulio Schimperna, \emph{Nonlocal phase-field systems
  with general potentials}, Discrete Contin. Dyn. Syst. \textbf{33} (2013),
  no.~11--12, 5089--5106.

\bibitem{Hale88}
Jack~K. Hale, \emph{Asymptotic behavior of dissipative systems}, Mathematical
  Surveys and Monographs - No. 25, American Mathematical Society, Providence,
  1988.

\bibitem{Lions69}
J.~L. Lions, \emph{Quelques m\'ethodes de r\'esolution des probl\`emes aux
  limites non lin\'eaires}, Dunod, Paris, 1969.

\bibitem{Milani&Koksch05}
Albert~J. Milani and Norbert~J. Koksch, \emph{An introduction to semiflows},
  Monographs and Surveys in Pure and Applied Mathematics - Volume 134, Chapman
  \& Hall/CRC, Boca Raton, 2005.

\bibitem{Pata-Zucchi-2001}
Vittorino Pata and Adele Zucchi, \emph{Attractors for a damped hyperbolic
  equation with linear memory}, Adv. Math. Sci. Appl. \textbf{11} (2001),
  no.~2, 505--529.

\bibitem{Pazy83}
Amnon Pazy, \emph{Semigroups of linear operators and applications to partial
  differential equations}, Applied Mathematical Sciences - Volume 44,
  Springer-Verlag, New York, 1983.

\bibitem{Porta-Grasselli-2014}
Francesco~Della Porta and Maurizio Grasselli, \emph{Convective nonlocal
  cahn-hilliard equations with reaction terms}, Discrete Contin. Dyn. Syst.
  Ser. B \textbf{20} (2015), no.~5, 1529--1553.

\bibitem{SS-16}
Riccardo Scala and Giulio Schimperna, \emph{On the viscous {C}ahn--{H}illiard
  equation with singular potential and inertial term}, AIMS Mathematics:
  Nonlinear Evolution PDEs, Interfaces and Applications \textbf{1} (2016),
  no.~1, 64--76.

\bibitem{Shomberg18-1}
Joseph~L. Shomberg, \emph{Upper-semicontinuity of the global attractors for a
  class of nonlocal {C}ahn--{H}illiard equations}, submitted.

\bibitem{Shomberg-reacg16}
\bysame, \emph{Robust exponential attractors for {C}oleman--gurtin equations
  with dynamic boundary conditions possessing memory}, Electron. J.
  Differential Equations \textbf{2016} (2016), no.~47, 1--35.

\bibitem{Shomberg-n16}
\bysame, \emph{Well-posedness and global attractors for a non-isothermal
  viscous relaxation of nonlocal {C}ahn--{H}illiard equations}, AIMS
  Mathematics: Nonlinear Evolution PDEs, Interfaces and Applications \textbf{1}
  (2016), no.~2, 102--136.

\bibitem{Tanabe79}
Hiroki Tanabe, \emph{Equations of evolution}, Pitman, London, 1979.

\bibitem{Temam88}
Roger Temam, \emph{Infinite-dimensional dynamical systems in mechanics and
  physics}, Applied Mathematical Sciences - Volume 68, Springer-Verlag, New
  York, 1988.

\bibitem{Temam01}
\bysame, \emph{{N}avier-{S}tokes equations - theory and numerical analysis},
  reprint ed., AMS Chelsea Publishing, Providence, 2001.

\bibitem{Warma15}
Mahamadi Warma, \emph{The fractional relative capacity and the fractional
  {L}aplacian with {N}eumann and {R}obin boundary conditions on open sets},
  Potential Analysis (to appear), DOI: 10.1007/s11118--014--9443--4.

\bibitem{Zheng04}
Songmu Zheng, \emph{Nonlinear evolution equations}, Monographs and Surveys in
  Pure and Applied Mathematics - Volume 133, Chapman \& Hall/CRC, Boca Raton,
  2004.

\bibitem{Zheng&Milani05}
Songmu Zheng and Albert Milani, \emph{Global attractors for singular
  perturbations of the {C}ahn--{H}illiard equations}, J. Differential Equations
  \textbf{209} (2005), no.~1, 101--139.

\end{thebibliography}
\providecommand{\bysame}{\leavevmode\hbox to3em{\hrulefill}\thinspace}
\providecommand{\MR}{\relax\ifhmode\unskip\space\fi MR }
\providecommand{\MRhref}[2]{%
  \href{http://www.ams.org/mathscinet-getitem?mr=#1}{#2}
}
\providecommand{\href}[2]{#2}

\end{document}